\title{On the growth of high Sobolev norms for certain one-dimensional Hamiltonian PDEs}
\author{Joseph \bsc{Thirouin}\\ joseph.thirouin@math.u-psud.fr}
\date{\today}
\DeclareMathOperator{\supp}{supp}
\DeclareMathOperator{\sgn}{sgn}
\newtheorem{thm}{Theorem}
\newtheorem{prop}{Proposition}[section]
\newtheorem{lemme}[prop]{Lemma}
\newtheorem{cor}[prop]{Corollary}
\theoremstyle{definition}
\newtheorem*{déf}{Definition}
\theoremstyle{remark}
\newtheorem{rem}{Remark}
\begin{document}

\renewcommand{\refname}{Bibliography}
\renewcommand{\abstractname}{Abstract}
\maketitle

\begin{abstract}
This paper is devoted to the study of large time bounds for the Sobolev norms of the solutions of the following fractional cubic Schrödinger equation on the torus :
\[i \partial_t u = |D|^\alpha u+|u|^2 u, \quad u(0, \cdot)=u_0,\]
where $\alpha$ is a real parameter. We show that, apart from the case $\alpha = 1$, which corresponds to a half-wave equation with no dispersive property at all, solutions of this equation grow at a polynomial rate at most. We also address the case of the cubic and quadratic half-wave equations.\par
\textbf{MSC 2010 :} 37K40, 35B45.\par
\textbf{Keywords :} Hamiltonian systems, fractional nonlinear Schrödinger equation, nonlinear wave equation, dispersive properties.
\end{abstract}

\section{Introduction}
In the study of Hamiltonian partial differential equations, understanding the large time dynamics of solutions is an important issue. In usual cases, the conservation of the Hamiltonian along trajectories enables to control one Sobolev norm of the solution (in the so-called \emph{energy space}), but when solutions are globally defined and regular, higher norms could grow despite the conservation laws, reflecting an energy transfer to high frequencies. Even for notorious equations, such as the nonlinear Schrödinger equation on manifolds, it is an old problem to know whether such instability occurs \cite{BourGAFA}, and often still an open question.

We start from the particular case of the defocusing Schrödinger equation on the torus of dimension one, with a cubic nonlinearity :
\begin{equation} \label{schr} i \partial_t u = -\partial_x^2 u+|u|^2u, \quad u(0, \cdot)=u_0.
\end{equation}
Here, $u$ is a function of time $t\in \mathbb{R}$ and of space variable $x\in \mathbb{T}$, and $u_0\in H^1(\mathbb{T})$. Equation \eqref{schr} is Hamiltonian, and because of the energy conservation, its trajectories are bounded is $H^1(\mathbb{T})$. But it is also well-known that \eqref{schr} is integrable (see \cite{KappG}, \cite{zsh}), with conservation laws ensuring that if $u_0$ belongs to $H^s(\mathbb{T})$ for some $s\in \mathbb{N}\backslash \{0\}$, then the solution $u$ remains bounded in $H^s$ (this is even true for any real $s\geq 1$ \cite{KST}).

In order to track down large time instability for Hamiltonian systems, Majda, McLaughlin and Tabak \cite{MMT} suggested to replace the Laplacian in \eqref{schr} by a whole family of pseudo-differential operators : the operators $|D|^\rho$ (sometimes written as $(\sqrt{-\partial_x^2})^\rho$) for real $\rho$. Recall that if $w=\sum_{k\in \mathbb{Z}} w_ke^{ikx}$ is a function on the torus, then
\[ |D|^\rho w= \sum_{k\in \mathbb{Z}} |k|^\rho w_ke^{ikx}. \]

So we consider the following fractional Schr\"odinger equation :
\begin{equation} \label{mmt} i \partial_t u = |D|^\alpha u+|u|^2 u, \quad u(0, \cdot)=u_0,
\end{equation}
where $\alpha$ is any positive number. If $\alpha = 2$, we recognize the classical Schr\"odinger equation \eqref{schr}. In the case $\alpha =1$, \eqref{mmt} is a non-dispersive equation, called the (defocusing) "half-wave" equation :
\begin{equation} \label{hw} i \partial_t u = |D| u+|u|^2 u, \quad u(0, \cdot)=u_0.
\end{equation}
This half-wave equation has been studied by Gérard and Grellier in \cite{aPDE}. In particular, they showed that the dynamics of \eqref{hw} is related to the behaviour of the solutions of a toy model equation, called the cubic Szeg\H{o} equation :
\begin{equation} \label{sz} i \partial_t u = \Pi_+(|u|^2 u), \quad u(0, \cdot)=u_0,
\end{equation}
where $\Pi_+:= \text{\textbf{1}}_{D\geq 0}$ is the projection onto nonnegative Fourier modes. In a more precise way, equation \eqref{sz} appears to be the completely resonant system of \eqref{hw}.

All the equations \eqref{mmt} derive from the Hamiltonian $\mathcal{H}_\alpha (u):=\frac{1}{2}(|D|^\alpha u,u)+\frac{1}{4}\|u\|_{L^4}^4$ for the symplectic structure endowed by the form $\omega (u,v)= \Im m (u,v)$, where $(u,v):=\int_\mathbb{T} u\bar{v}$ denotes the standard inner product on $L^2(\mathbb{T})$. The functional $\mathcal{H}_\alpha $ is therefore conserved along trajectories. Gauge invariance as well as translation invariance also imply the existence of two other conservation laws for equation \eqref{mmt} :
\begin{align*}
Q(u)&:= \frac{1}{2}\|u\|_{L^2}^2 \\
M(u)&:= (Du,u), \quad \text{where } D:=-i\partial_x ,
\end{align*}
\textit{i.e.} the mass and the momentum respectively. Starting from these observations, it has been proved that for $\alpha =1$, equation \eqref{hw} admits a globally defined flow in $H^s$ with $s\geq \frac{1}{2}$ (see \cite{aPDE}). In the case of the half-wave equation, the Brezis-Gallouët inequality \cite{BrG} also ensures that $H^s$-norms of solutions grow at most like $e^{\exp B|t|}$, for some constant $B>0$ depending on $s$ and on the initial data.

The question of the large time instability of global solutions of \eqref{mmt} thus naturally arises : is it possible to find smooth initial data whose corresponding orbits are not bounded in some space $H^s$, or at least not polynomially bounded\footnote{We say that a solution $t\mapsto u(t)$ is \emph{polynomially bounded} in $H^s$ if there are positive constants $C$ and $A$ (not depending on time) such that for all $t\in \mathbb{R}$, $\|u(t)\|_{H^s}\leq C(1+|t|)^A$.} ?

The cubic Szeg\H{o} equation discloses this kind of instability, as recently shown in \cite{IHES}, \cite{livrePG} : for generic smooth initial data, the corresponding solution of the Szeg\H{o} equation in $H^s$ is polynomially unbounded, for any $s>\frac{1}{2}$. Therefore it is reasonable to think that the same statement should hold for the half-wave equation \eqref{hw}, though such a result seems far beyond our reach at this point. Nevertheless the theorem we prove in this paper gives an a priori bound for all solutions of the half-wave equation :

\begin{thm}
\label{theohw}
Let $u_0 \in \mathcal{C}^\infty(\mathbb{T})$, and $t\mapsto u(t)$ the solution of the half-wave equation \eqref{hw} such that $u(0)=u_0$. Given any integer $n\geq 0$, there exist real constants $B$, $C>0$ such that $\forall t\in \mathbb{R}$,
\begin{equation}\label{est2}\|u(t)\|_{H^{1+n}} \leq C e^{B|t|^2}.
\end{equation}
Here, $C$ depends on $n$ and $\|u_0\|_{H^{1+n}}$, whereas $B$ can be chosen equal to $B_n\|u_0\|_{H^{1/2}}^8$, with $B_n >0$ depending only on $n$.
\end{thm}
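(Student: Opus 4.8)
The plan is to run an energy estimate on $\||D|^{1+n}u(t)\|_{L^2}^2$ and to exploit the conservation laws to absorb the non-dispersive interactions. First I would record that, by conservation of the mass $Q$ and of the Hamiltonian $\mathcal{H}_1$ (which is coercive in the defocusing case), the quantity $\|u(t)\|_{H^{1/2}}$ stays bounded by a constant depending only on $\|u_0\|_{H^{1/2}}$; this is the sole role of the energy space, and it is what allows $B$ to be taken proportional to a power of $\|u_0\|_{H^{1/2}}$. Since $2(1+n)$ is an even integer, $\||D|^{1+n}u\|_{L^2}^2=\|\partial_x^{1+n}u\|_{L^2}^2$, so I may work with the local operator $\partial_x^{1+n}$. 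Differentiating in time and using that $|D|$ is nonnegative, self-adjoint and commutes with $\partial_x$, the linear part $|D|u$ contributes nothing, and I am left with
\[\frac{d}{dt}\|\partial_x^{1+n}u\|_{L^2}^2 = 2\,\im\big(\partial_x^{1+n}(|u|^2u),\partial_x^{1+n}u\big).\]

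Next I would expand $\partial_x^{1+n}(|u|^2u)=\partial_x^{1+n}(u^2\bar u)$ by the Leibniz rule. The term carrying all derivatives on a holomorphic factor, namely $2|u|^2\partial_x^{1+n}u$, pairs against $\partial_x^{1+n}u$ into $2\int|u|^2|\partial_x^{1+n}u|^2$, which is real and disappears under $\im$. Only two pieces survive: the top-order term carrying all derivatives on the antiholomorphic factor,
\[\im\int u^2\big(\overline{\partial_x^{1+n}u}\big)^2\,dx,\]
and a remainder in which every factor carries at most $n$ derivatives. The remainder is harmless: pairing with the single factor $\overline{\partial_x^{1+n}u}$ and applying Cauchy--Schwarz together with Sobolev embeddings, it is bounded by a product of norms of order at most $n$ times $\|u\|_{\dot H^{1+n}}$, i.e.\ by the top norm times a strictly lower one.

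What remains, and what constitutes the whole difficulty, is the top-order term $\im\int u^2(\overline{\partial_x^{1+n}u})^2\,dx$. Two natural bounds are available but insufficient: estimating directly by $\|u\|_{L^\infty}^2\|u\|_{\dot H^{1+n}}^2$ and feeding the Brezis--Gallou\"et inequality $\|u\|_{L^\infty}\lesssim\|u\|_{H^{1/2}}(1+\log^{1/2}(\ldots))$ into Gronwall reproduces only the double-exponential bound; estimating instead by $\|u\|_{L^4}^2\|\partial_x^{1+n}u\|_{L^4}^2\lesssim\|u\|_{H^{1/2}}^2\|u\|_{\dot H^{1+n+1/4}}^2$ loses a quarter derivative. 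The improvement I would aim for is to exploit that it is the imaginary part that enters: passing to Fourier modes on the resonant set $a+a'=b+b'$, the multiplier $(bb')^{1+n}-(aa')^{1+n}$ factors as $(b-a)(b'-a)\sum_{i=0}^n(bb')^i(aa')^{n-i}$, so that away from the exact resonances $\{a,a'\}=\{b,b'\}$ there is a genuine gain of regularity; the only interactions with no gain are those in which the two high frequencies are opposite, $b\approx-b'$, and these must be controlled using the conserved low-frequency mass.

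Granting such a refined estimate, the outcome should be a differential inequality of the form $\frac{d}{dt}\log\|u(t)\|_{\dot H^{1+n}}\lesssim_n\Phi(t)$, where $\Phi$ is built from norms of order $\le n$ and from powers of the conserved $\|u\|_{H^{1/2}}$. I would then close by induction on $n$, the base case being a direct treatment of a fixed critical norm of Lipschitz type. The decisive and, I expect, hardest step is exactly this base case: to show that the non-dispersive term nonetheless forces at most an algebraic growth $\|u(t)\|_{W^{1,\infty}}^2\lesssim\|u_0\|_{H^{1/2}}^8(1+|t|)$, the eighth power arising from the Gagliardo--Nirenberg and Brezis--Gallou\"et constants used to interpolate the Lipschitz norm between the bounded $H^{1/2}$ norm and a slowly growing higher norm. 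Once $\Phi$ is known to grow at most linearly in time with rate proportional to $\|u_0\|_{H^{1/2}}^8$, integrating the logarithmic inequality on $[0,t]$ yields $\int_0^t\Phi\lesssim\|u_0\|_{H^{1/2}}^8|t|^2$, and restoring the full $H^{1+n}$ norm through mass conservation produces the announced bound $\|u(t)\|_{H^{1+n}}\le Ce^{B|t|^2}$ with $B=B_n\|u_0\|_{H^{1/2}}^8$.
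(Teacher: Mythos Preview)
Your proposal contains a genuine gap at exactly the point you yourself flag as ``the decisive and hardest step''. You reduce everything to a base case asserting that $\|u(t)\|_{W^{1,\infty}}^2$ grows at most linearly in $|t|$, but you give no argument for this, and in fact such a bound would be a \emph{polynomial} a priori estimate on a norm above the energy space --- something strictly stronger than what the theorem claims, and not expected to hold for the half-wave equation. The Fourier-multiplier heuristics you sketch for the top-order term $\im\int u^2(\overline{\partial_x^{1+n}u})^2$ are not carried to any concrete estimate; the factorisation you write down does not by itself yield a gain, since on the resonant set $a+a'=b+b'$ one cannot avoid configurations with two comparable high frequencies, and ``controlling these by the conserved low-frequency mass'' is precisely the issue. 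As written, the argument is circular: it assumes a bound at least as hard as the conclusion.

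The paper's mechanism is different and self-contained. Instead of a direct energy estimate on $\|\partial_x^{1+n}u\|_{L^2}^2$, one perturbs it by two explicit lower-order corrections,
\[
J_1(u)=2\,\Re e\big(|D|^{1+n}u,\,|D|^n(|u|^2u)\big),\qquad
J_2(u)=-\tfrac12\big\||D|^{\frac12+n}(|u|^2)\big\|_{L^2}^2,
\]
chosen so that, upon time-differentiation, the worst contributions cancel. After this cancellation and an elementary commutator bound for $|D|$ (Lemma~\ref{leiblem} with $\alpha=1$), what remains is controlled by $\|u\|_{H^{1+n}}^2\,\|u_0\|_{H^{1/2}}^2\,\|u\|_{L^\infty}$, i.e.\ with a \emph{single} factor of $\|u\|_{L^\infty}$ rather than two. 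Feeding in Brezis--Gallou\"et then gives the Osgood-type inequality
\[
\frac{d}{dt}\,g(t)\ \lesssim\ \|u_0\|_{H^{1/2}}^4\, g(t)\sqrt{\log\big(1+g(t)\big)},\qquad g(t):=\|u(t)\|_{H^{1+n}}^2/C^2\|u_0\|_{H^{1/2}}^4,
\]
which integrates directly to $e^{B|t|^2}$ with $B\simeq\|u_0\|_{H^{1/2}}^8$. No auxiliary polynomial bound and no induction on $n$ is needed; the modified energy is what reduces two $L^\infty$ factors to one, and that single saving is exactly the difference between the double-exponential and the $e^{B|t|^2}$ bounds.
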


The bound appearing in \eqref{est2} is an improvement the "double exponential bound" mentioned above. But as a matter of fact, finding any explicit non-trivial solution of \eqref{mmt} is still an open problem, and nothing is known about the optimality of \eqref{est2}. Solution with rapidly growing $H^s$-norms could perfectly well exist. H. Xu \cite{Xu} typically proved the existence of exponentially growing solutions for a perturbation of the Szeg\H{o} equation. See also the result of Hani--Pausader--Tzvetkov--Visciglia \cite{HPTV}, in the context of the Schrödinger equation, as well as its recent counterpart in \cite{Xu2}.

Notice that in the case of the Szeg\H{o} equation, the best bound quantifying the growth of Sobolev norms of solutions is $e^{B|t|}$ : it is obtained by G\'erard and Grellier in \cite[section 3]{ann}. Hence \eqref{est2} is likely to be improved, but recall that, as far as we know, the only way of proving the simple exponential bound for Szeg\H{o} solutions makes use of the Lax pair structure associated with the equation. Elementary methods would only give an $e^{B|t|^2}$ bound (see Appendix \ref{sz-quad}). Unfortunately, such a Lax pair structure apparently does not exist as regards the half-wave equation.

Even so, the proof of \eqref{est2} in theorem \ref{theohw} suggests that we could get a simple exponential bound, instead of $e^{B|t|^2}$, if we could deal with a quadratic nonlinearity. Simply putting an $L^3$-norm in the energy $\mathcal{H}_1$, instead of the $L^4$ one, would give rise to a nonlinearity of the form $|u|u$, and the singularity at the origin may lead to solutions less regular than their initial data. It is possible to avoid this phenomenon, by consider a system of two equations rather than a single scalar equation :
\begin{equation}\label{quad} \left\lbrace\begin{aligned}
i\partial_t u_1 &= |D|u_1+u_2\overline{u_1},\\
i\partial_t u_2 &= |D|u_2+\frac{u_1^2}{2},
\end{aligned}\right.
\end{equation}
with $\left. (u_1,u_2)\right|_{t=0} = (u_1^0,u_2^0)$. System \eqref{quad} only involves (analytic) quadratic nonlinearities. It happens that Schrödinger systems of that kind frequently appear in physics : they are closely linked with the SHG (Second-Harmonic Generation) theory in optics, and the study of propagation of solitons in so-called $\chi ^{(2)}$ (or quadratic) media or materials (for a review, see e.g. \cite[section 4]{kiv-quad}). Quadratic systems are also relevant in fluid mechanics, to describe the interaction between long nonlinear waves in fluid flows \cite{gottwald}. From a mathematical point of view, interest on quadratic systems is more recent \cite{hayashi}.

For the case of system \eqref{quad}, we prove the following theorem :

\begin{thm}\label{theo^2}
Let $(u_1^0,u_2^0) \in \mathcal{C}^\infty(\mathbb{T})\times \mathcal{C}^\infty(\mathbb{T})$, and $t\mapsto (u_1(t),u_2(t))$ the solution of \eqref{quad} such that $(u_1(0),u_2(0))=(u_1^0,u_2^0)$. Given any integer $n\geq 0$, there exist real constants $B'$, $C>0$ such that $\forall t\in \mathbb{R}$,
\begin{equation}\label{est3}\|u_1(t)\|_{H^{1+n}}\text{, }\|u_2(t)\|_{H^{1+n}} \leq C e^{B'|t|}.
\end{equation}
Here, $C$ depends on $n$ and on the sum $\|u_1^0\|_{H^{1+n}}+\|u_2^0\|_{H^{1+n}}$, whereas $B'$ can be chosen equal to $B'_n(\|u_1^0\|_{H^{1/2}}^2+\|u_2^0\|^2_{H^{1/2}})$, with $B'_n >0$ depending only on $n$.
\end{thm}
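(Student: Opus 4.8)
The plan is to run an energy estimate at the top regularity level and close it by Gronwall's inequality, after first controlling the energy norm a priori. The two conservation laws forced by the quadratic coupling are the Hamiltonian $\mathcal{H}(u_1,u_2)=\tfrac12\||D|^{1/2}u_1\|_{L^2}^2+\tfrac12\||D|^{1/2}u_2\|_{L^2}^2+\operatorname{Re}\int_{\mathbb{T}}\overline{u_2}\,u_1^2$ and the weighted mass $Q(u_1,u_2)=\tfrac12\|u_1\|_{L^2}^2+\|u_2\|_{L^2}^2$, whose invariance reflects the gauge symmetry $(u_1,u_2)\mapsto(e^{i\theta}u_1,e^{2i\theta}u_2)$. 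Since the trilinear term is subcritical in one dimension — by Gagliardo--Nirenberg $\|w\|_{L^3}^3\lesssim\|w\|_{L^2}^2\,\||D|^{1/2}w\|_{L^2}$ — Young's inequality absorbs it into the quadratic part, so that $\sup_t(\|u_1(t)\|_{H^{1/2}}+\|u_2(t)\|_{H^{1/2}})$ is bounded by a constant depending only on $\|u_1^0\|_{H^{1/2}}$ and $\|u_2^0\|_{H^{1/2}}$. This bounded quantity is what will eventually feed the constant $B'_n\big(\|u_1^0\|_{H^{1/2}}^2+\|u_2^0\|_{H^{1/2}}^2\big)$.

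Next, writing $s=1+n$ and $v_j=|D|^s u_j$, I would differentiate $N_s(t)^2:=\|v_1\|_{L^2}^2+\|v_2\|_{L^2}^2$. The purely dispersive contribution of $|D|$ is skew-adjoint and drops out, leaving $\tfrac12\frac{d}{dt}N_s^2=\operatorname{Im}(|D|^s(u_2\overline{u_1}),v_1)+\tfrac12\operatorname{Im}(|D|^s(u_1^2),v_2)$. Applying the fractional Leibniz rule to both products isolates the terms carrying all $s$ derivatives on one factor; the remainders split the derivatives strictly and are controlled, by Kato--Ponce type estimates, by the bounded $H^{1/2}$-norm times $N_s$ together with strictly lower norms. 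The decisive algebraic fact — the reason the system \eqref{quad} is built this way — is that the two leading pieces $\operatorname{Im}\int\overline{u_1}\,v_2\,\overline{v_1}$ (coming from the first equation) and $\operatorname{Im}\int u_1\,v_1\,\overline{v_2}$ (coming from the second) are exact opposites and cancel. What survives at top order is the single term $\operatorname{Im}\int u_2\,(\overline{v_1})^2$.

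It remains to estimate this surviving term, and this is the heart of the matter. The crude bound $|\operatorname{Im}\int u_2(\overline{v_1})^2|\le\|u_2\|_{L^\infty}N_s^2$ is fatal: in one dimension $H^{1/2}$ does not embed in $L^\infty$, so $\|u_2\|_{L^\infty}$ is not controlled by the conserved quantities, and inserting the Brezis--Gallou\"et logarithm would only reproduce the weaker $e^{B|t|^2}$ bound of the cubic case. The gain offered by the \emph{quadratic} nonlinearity is that this residual term is only \emph{linear} in the solution, so that it can be removed by a normal-form / modified-energy correction: I would add to $N_s^2$ a cubic functional $\mathcal{C}_s$, trilinear in $u_1,u_2$ and their $s$-th derivatives, whose time derivative along the linear flow cancels $\operatorname{Im}\int u_2(\overline{v_1})^2$ up to quartic terms. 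Because the correction raises the homogeneity by one, the residual is quartic and is bounded by $\big(\|u_1\|_{H^{1/2}}^2+\|u_2\|_{H^{1/2}}^2\big)N_s^2$, while $\mathcal{C}_s$ itself is an admissible lower-order perturbation of $N_s^2$. This yields a clean differential inequality $\frac{d}{dt}E_s\lesssim\big(\|u_1\|_{H^{1/2}}^2+\|u_2\|_{H^{1/2}}^2\big)E_s+(\text{lower order})$ with $E_s\simeq N_s^2$, and Gronwall's inequality then gives the rate $e^{B'|t|}$ with $B'$ proportional to the conserved $H^{1/2}$-norm. The lower-order remainders are absorbed by an induction on $n$, the base case $n=0$ using only the $H^{1/2}$ bound.

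The main obstacle I anticipate is precisely the construction and control of the correction $\mathcal{C}_s$. The frequency interactions in $\operatorname{Im}\int u_2(\overline{v_1})^2$ are largely resonant for $|D|$: under the convolution constraint the phase $-|p|+|k_1|+|k_2|$ vanishes whenever the two high-frequency factors share a sign. Thus the non-resonant part can be integrated by parts in time without small-divisor trouble, but the resonant part must be shown to be controlled directly by the bounded $H^{1/2}$-norm. Making this splitting quantitative, uniformly in $s=1+n$, and checking that $\mathcal{C}_s$ stays genuinely lower order than $N_s^2$, is where the real work lies; everything else is standard commutator bookkeeping and a Gronwall argument.
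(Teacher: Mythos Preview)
Your overall architecture matches the paper's: conservation of $\tilde{\mathcal Q}$ and $\tilde{\mathcal H}$ gives a uniform $H^{1/2}$ bound, you differentiate an $H^{1+n}$-energy, correctly isolate the top-order cancellation between the two equations (the cross terms $\Im m\int\overline{u_1}\,v_2\,\overline{v_1}$ and $\Im m\int u_1\,v_1\,\overline{v_2}$ do annihilate), and you correctly identify the surviving obstruction $\Im m\int u_2\,(\overline{v_1})^2$ together with the reason a crude $L^\infty$ bound only returns the $e^{B|t|^2}$ rate.

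The gap is exactly where you flag it, and the paper closes it by a different device than the one you sketch. Your Fourier normal form cannot touch the resonant set $\{k_1k_2\ge 0\}$, which is half of the interactions; you say this piece ``must be shown to be controlled directly by the bounded $H^{1/2}$-norm'' but give no mechanism. (It \emph{can} be done: the resonant sum $\sum_{k_1,k_2\ge 0}\hat u_2(k_1{+}k_2)\,\overline{a_{k_1}}\,\overline{a_{k_2}}$ with $a_k=|k|^s\hat u_1(k)$ is a Hankel pairing, and $\|H_{u_2}\|_{L^2\to L^2}\le\|u_2\|_{H^{1/2}}$ --- this is Proposition~\ref{Hankel} in the appendix --- but the paper does not use this for Theorem~\ref{theo^2}, and you do not supply it.) Your assertion that the quartic residual from the non-resonant normal form is automatically bounded by $(\|u_1\|_{H^{1/2}}^2+\|u_2\|_{H^{1/2}}^2)N_s^2$ is also not justified.

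The paper avoids the resonance discussion entirely by working in physical space. It keeps the time derivative inside the critical term, writing it as $2\Re e(\partial_x^{n}|D|u_1,\;u_2\,\partial_x^{n}\dot{\overline{u_1}})$, and then splits the one remaining $|D|$ as $|D|^{1/2}\!\cdot|D|^{1/2}$, moving one half across by the Kenig--Ponce--Vega fractional Leibniz estimate (Lemma~\ref{KPV}). Two of the three pieces so produced carry a factor $|D|^{1/2}u_2$ and are bounded, via $L^4$--$L^4$--$L^2$ H\"older and $H^{1/4}\hookrightarrow L^4$, by $\|u_1\|_{H^{3/4+n}}\|u_2\|_{H^{3/4}}\|u_1\|_{H^{1+n}}$, which interpolates exactly to $(\|u_1^0\|_{H^{1/2}}^2+\|u_2^0\|_{H^{1/2}}^2)F(t)$. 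The third, symmetric piece
\[
2\Re e\!\int(|D|^{1/2}\partial_x^{n}u_1)\,(|D|^{1/2}\partial_x^{n}\dot u_1)\,\overline{u_2}
\]
is an \emph{exact} time derivative of $\Re e\big((\partial_x^{n}|D|^{1/2}u_1)^2,\,u_2\big)$, up to a term in which the time derivative falls on $u_2$; that remainder is $\lesssim\|u_1\|_{H^{3/4+n}}^2\|u_2\|_{H^1}$, and again interpolation between $H^{1/2}$ and $H^{1+n}$ closes it with no loss and no $L^\infty$ norm. The added correction $\Re e\big((\partial_x^{n}|D|^{1/2}u_1)^2,\,u_2\big)$ lives at regularity $n+\tfrac12$ in $u_1$ and is manifestly lower order. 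This half-derivative redistribution, followed by a one-line integration by parts in time, is the concrete step that replaces your unfinished normal-form argument.
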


Let us now return to equation \eqref{mmt}. When $\alpha \neq 1$, \eqref{mmt} has dispersive properties. Using them for $\alpha >1$ and proving some Strichartz estimate for the operator $e^{-it|D|^\alpha}$, Demirbas, Erdo\u{g}an and Tzirakis show in \cite{Demir} that \eqref{mmt} is globally well-posed in the energy space $H^{\frac{\alpha}{2}}$ (and even below). Their method rely on Bourgain's high-low frequency decomposition, but it does not say anything about the possible growth of solutions.

Still for $\alpha >1$, a naive calculation leads to an exponential bound for $H^s$-norms of solutions, \textit{i.e.} a bound of the form $e^{A|t|}$, but results such as Bourgain's \cite{Bourgain} or Staffilani's \cite{Staffilani} suggest that because of dispersion, solutions should be polynomially bounded. On the other hand, the polynomial growth of solutions of \eqref{mmt} for $\alpha >1$ is announced to be true in the work by Demirbas--Erdo\u{g}an--Tzirakis. Indeed we establish a theorem also involving (part of) the case $\alpha <1$ :

\begin{thm}
\label{theommt}
Let $\alpha \in (\frac{2}{3},1)\cup (1,2)$, and $u_0 \in \mathcal{C}^\infty(\mathbb{T})$. There exist a unique $u\in \mathcal{C}^\infty(\mathbb{R},\mathcal{C}^\infty(\mathbb{T}))$ solution of \eqref{mmt} with $u(0)=u_0$.
Furthermore, given any integer $n\geq 0$, there exist real constants $A$, $C>0$, and a constant $C'>0$ depending only on $\|u_0\|_{H^{\alpha +n}}$, such that $\forall t\in \mathbb{R}$,
\begin{equation}\label{est1}\|u(t)\|_{H^{\alpha +n}} \leq C'(1+C|t|)^A.
\end{equation}

When $\alpha \in (1,2)$, we can choose
\[ A=\frac{2n+\alpha}{\alpha -1}, \quad C=C_{\alpha ,n}\|u_0\|_{H^{\alpha /2}}^{4+\frac{\alpha -1}{2n+\alpha}},\]
where $C_{\alpha ,n}>0$ is a constant which only depends on $\alpha$ and $n$.

When $\alpha \in (\frac{2}{3},1)$, a possible choice is
\[ A=\frac{2n(23\alpha-2)}{(2\alpha -1)(3\alpha -2)}+\frac{10\alpha}{3\alpha -2}.\]
\end{thm}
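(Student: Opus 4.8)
The overall plan is to deduce global existence from a priori control of all Sobolev norms, and to obtain \eqref{est1} from a single differential inequality for the top norm. I would first settle local well-posedness in $H^s(\mathbb T)$ for every $s>\tfrac12$: the propagator $e^{-it|D|^\alpha}$ is unitary on each $H^s$, and since $H^s$ is a Banach algebra for $s>\tfrac12$ the cubic map $u\mapsto|u|^2u$ is locally Lipschitz, so Duhamel's formula gives a contraction on an interval whose length depends only on $\|u_0\|_{H^s}$. Conservation of $\mathcal H_\alpha$ and $Q$, combined with the Gagliardo--Nirenberg inequality to absorb the $L^4$ term, yields a uniform-in-time bound on $\|u(t)\|_{H^{\alpha/2}}$; this is the conserved quantity on which the constants in \eqref{est1} are allowed to depend, and it also gives global existence in the energy space (as in \cite{Demir}). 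Smoothness in both variables then follows by propagating $H^{\alpha+n}$-regularity for every $n$ and using the equation to trade space derivatives for time derivatives, \emph{provided} one has the a priori bound \eqref{est1}, which prevents any norm from blowing up in finite time. I would establish \eqref{est1} by induction on $n$, the norms between $H^{\alpha/2}$ and $H^{\alpha+n}$ being controlled by interpolation.

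The core is the estimate of $\frac{d}{dt}\||D|^su\|_{L^2}^2$ for $s=\alpha+n$. Writing $u=\sum_k u_ke^{ikx}$ and using \eqref{mmt}, the contribution of the linear part $|D|^\alpha u$ is purely imaginary and drops out, leaving a quartic expression
\[\frac{d}{dt}\||D|^su\|_{L^2}^2=2\,\im\!\!\!\sum_{k_1-k_2+k_3-k_4=0}\!\!\!c_{\vec k}\;u_{k_1}\overline{u_{k_2}}\,u_{k_3}\overline{u_{k_4}},\qquad |c_{\vec k}|\lesssim N^{2s},\]
where $N:=\max_i|k_i|$ and the weight has been symmetrised. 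A direct bound gives only exponential growth, so the idea is to pass to the interaction representation and perform a normal form: on the non-resonant region one writes the oscillatory factor as a time derivative, $e^{-it\Omega}=\frac{i}{\Omega}\frac{d}{dt}e^{-it\Omega}$, where
\[\Omega:=|k_1|^\alpha-|k_2|^\alpha+|k_3|^\alpha-|k_4|^\alpha\]
is the resonance function, and integrates by parts in time. This produces a boundary term, to be absorbed into a \emph{modified energy} $\widetilde E_s:=\||D|^su\|_{L^2}^2+R$, together with a remainder in which one $\partial_t u$ is replaced by the right-hand side of \eqref{mmt}, hence a sextic expression; both $R$ and the sextic remainder carry an extra factor $1/\Omega$.

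What makes this effective is a lower bound for $|\Omega|$ off the trivial resonances $\{k_1,k_3\}=\{k_2,k_4\}$, on which the imaginary part above vanishes. For $\alpha>1$ the map $k\mapsto|k|^\alpha$ is strictly convex, and the momentum constraint forces the two largest frequencies to be comparable to $N$; a mean-value computation then gives $|\Omega|\gtrsim N^{\alpha-1}\,|k_1-k_2|$. Inserting this gain into $R$ and into the sextic remainder lowers the effective weight from $N^{2s}$ towards $N^{2s-(\alpha-1)}$; after Cauchy--Schwarz and the conserved $H^{\alpha/2}$-bound, one controls the dangerous part by $\|u\|_{H^s}\,\|u\|_{H^{s-(\alpha-1)}}$ up to harmless lower-order factors. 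Interpolating $\|u\|_{H^{s-(\alpha-1)}}$ between $H^{\alpha/2}$ and $H^s$ converts this into a \emph{sub}-quadratic power $\|u\|_{H^s}^{q}$ with $q<2$, so that $\frac{d}{dt}\widetilde E_s\lesssim\widetilde E_s^{\,q/2}$; since $\widetilde E_s\simeq\|u\|_{H^s}^2$, integrating this differential inequality yields a polynomial bound of the form \eqref{est1} with the exponent $A=\frac{2n+\alpha}{\alpha-1}$ announced for $\alpha\in(1,2)$, the power of $\|u_0\|_{H^{\alpha/2}}$ in $C$ coming from the conserved factors.

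The main obstacle is twofold. First, in the sextic remainder certain configurations carry three frequencies comparable to $N$; for these the single gain $1/\Omega$ is not enough, and one must argue more carefully, either by a further exploitation of non-resonance or by redistributing the gain so as to keep exactly one factor at the top norm, which is what ultimately pins down the precise exponent. Second, and more seriously, when $\alpha\in(\tfrac23,1)$ the dispersion relation is \emph{concave}, so $N^{\alpha-1}\to0$: the naive convexity bound degenerates and the normal form no longer gains derivatives in the worst interactions. Here one has to split the sum according to the relative sizes of the four frequencies and prove configuration-dependent lower bounds for $|\Omega|$; the interpolation exponents only close under the hypothesis $\alpha>\tfrac23$ (ensuring $3\alpha-2>0$), and the bookkeeping of these several regimes is exactly what produces the more intricate value of $A$ stated for $\alpha\in(\tfrac23,1)$.
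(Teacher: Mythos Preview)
Your strategy bifurcates naturally at $\alpha=1$, and the two halves deserve separate comments.

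\medskip
\textbf{The case $\alpha\in(1,2)$.} Your Fourier--space normal form (integrating the oscillation $e^{-it\Omega}$ by parts in $t$ and absorbing the boundary term into a modified energy) is a legitimate route and can be made to yield the exponent $A=\frac{2n+\alpha}{\alpha-1}$. It is, however, \emph{not} what the paper does. The paper works in physical space: it adds to $\||D|^{\alpha+n}u\|_{L^2}^2$ two explicit quartic corrections, $2\Re e(|D|^{\alpha+n}u,|D|^n(|u|^2u))$ and $-\tfrac12\||D|^{\alpha/2+n}(|u|^2)\|_{L^2}^2$, chosen so that after differentiating in $t$ and using the equation, the worst terms cancel algebraically. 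No lower bound on $\Omega$ is ever invoked; the gain of $\alpha-1$ derivatives comes instead from a commutator (``Leibniz'') estimate
\[
\|\bar u\,|D|^\alpha u+u\,|D|^\alpha\bar u-|D|^\alpha(|u|^2)\|_{H^n}\lesssim \|u\|_{H^{\alpha/2}}^{1+\frac{\alpha-1}{2n+\alpha}}\|u\|_{H^{\alpha+n}}^{1-\frac{\alpha-1}{2n+\alpha}},
\]
which is proved by an elementary Fourier argument. Your approach trades this single lemma for a case analysis on frequency configurations and small divisors; the paper's approach avoids the sextic remainder entirely.

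\medskip
\textbf{The case $\alpha\in(\tfrac23,1)$.} Here there is a genuine gap. Once $\alpha<1$ the dispersion is concave and the best generic lower bound for the resonance function is $|\Omega|\gtrsim N^{\alpha-1}$ times a frequency difference, which is a \emph{loss} of $1-\alpha$ derivatives: dividing by $\Omega$ makes both the boundary term $R$ and the sextic remainder worse, not better. Your proposed fix---``configuration-dependent lower bounds for $|\Omega|$''---cannot produce a gain, because the high--high--low--low interactions that dominate are honestly near-resonant at scale $N^{\alpha-1}$; no splitting of the sum changes this. In particular, the claim that the constraint $\alpha>\tfrac23$ emerges from closing interpolation exponents in this scheme is not correct.

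The paper's argument for $\alpha<1$ is of a completely different nature and does not use resonance at all. It first proves a Strichartz estimate
\[
\|e^{-it|D|^\alpha}\Delta_N u\|_{L^4((0,1)_t,L^\infty(\mathbb T))}\lesssim N^{\frac12-\frac\alpha4}\|u\|_{L^2},
\]
and uses it to set up local well-posedness in a Bourgain space $X^{\gamma,b}_\alpha$ with $\gamma>\tfrac12-\tfrac\alpha4$, with lifespan depending only on $\|u_0\|_{H^\gamma}$. The threshold $\alpha>\tfrac23$ is exactly the condition $\tfrac12-\tfrac\alpha4<\tfrac\alpha2$, which allows one to take $\gamma$ in the energy space and hence globalise by conservation of $\mathcal H_\alpha$; this also yields a uniform bound $\|u\|_{L^4(I,L^\infty)}\lesssim 1$ on every interval $I$ of fixed length. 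The paper then reruns the same physical-space modified energy as above, using the Kenig--Ponce--Vega fractional Leibniz rule in place of the commutator lemma, and arrives at a differential inequality containing factors of $\|u\|_{L^\infty}$; these are controlled \emph{in $L^4_t$} by the Strichartz bound, which is what gives the sub-quadratic power and the polynomial growth. None of this Strichartz/Bourgain-space input appears in your outline, and without it the argument for $\alpha<1$ does not close.

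A secondary point: for $\alpha<1$ the energy only controls $H^{\alpha/2}$ with $\alpha/2<\tfrac12$, below your algebra threshold, so your sentence ``this \dots\ also gives global existence in the energy space (as in \cite{Demir})'' is not justified---\cite{Demir} treats $\alpha>1$. The paper's Bourgain-space local theory, with lifespan depending on the conserved $H^{\alpha/2}$ norm, is precisely what supplies global existence here.
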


Here again, it is not known whether \eqref{est1} is optimal or not. When $\alpha >1$, Demirbas \cite{demirbas2014almost} found, by a probabilistic way inspired by the works of Bourgain \cite{BourComm}, solutions growing at most like a power of $\log (1+|t|)$. But in any case, proving that some solutions of \eqref{mmt} do blow up for large time, even at a very low rate, would be a big step forward.

Combining theorem \ref{theohw} and \ref{theommt} thus indicate that $\alpha =1$ is an isolate point in the family of equations \eqref{mmt}. Notice that, when $\alpha <1$, theorem \ref{theommt} includes the existence of a flow, which had not been proved so far. As for the condition $\alpha >\frac{2}{3}$, it appears to be convenient in the proof for technical reasons ; but since the heart of our work is to prove that the case $\alpha =1$ is more likely to disclose weak turbulence phenomena than other cases, we postpone discussions and comments concerning the relevance of the value $\frac{2}{3}$ until Appendix \ref{wellp}.

The proof of theorems \ref{theohw}, \ref{theo^2} and \ref{theommt} is based on an idea developped by Ozawa and Visciglia in \cite{OzV} : the authors introduce a \emph{modified energy method}, in order to sharpen $H^1$-estimates and thus prove well-posedness for the half-wave equation with quartic nonlinearity. To put it shortly, their idea is to introduce a nonlinear energy which is in fact a perturbation of the norm they wish to bound. The perturbation does not modify the size of the norm, but induces simplifications while differentiating, so that time-differentiation behaves like a self-adjoint operator.

In the sequel of this paper, we begin by proving theorems \ref{theohw}, \ref{theo^2}, and the first part of theorem \ref{theommt}, with elementary tools. Then we address the case of $\alpha <1$, using Bourgain spaces as in \cite{BGT}, which we fully develop for the convenience of the reader.

We were about to finish this paper when we were informed of a work by Planchon and Visciglia also applying the modified energy method to solutions of nonlinear Schrödinger equations on certain Riemaniann manifolds, and for every power nonlinearity.

The author would like to express his gratitude towards P. Gérard for his deep insight and generous advice. He also thanks J.-C. Saut for the references concerning quadratic medias.

\section{The case $\alpha \geq 1$}

Throughout this section, we suppose that $\alpha \in [1,2)$. We fix $u_0 \in \mathcal{C}^\infty(\mathbb{T})$, and we study $t\mapsto u(t)$, the associated solution of \eqref{mmt} (or \eqref{hw}, depending on the value of $\alpha$).

\subsection{The modified energy method}\label{ozv}
Fix $n\in \mathbb{N}$. The following lemma gathers some standard inequalities of which we will make an extensive use.
\begin{lemme}\label{baba}
There exist an absolute constant $C>0$, a constant $C_\alpha$ depending on $\alpha$ and a constant $C_n$ depending on $n$ such that, for every $t\in \mathbb{R}$,
\begin{enumerate}
\item \label{alpha/2}$\|u(t)\|_{H^{\alpha /2}} \leq C \|u_0\|_{H^{\alpha /2}}^2$,
\item \label{sobo}if $\alpha > 1$, $\|u(t)\|_{L^\infty} \leq C_\alpha \|u_0\|^2_{H^{\alpha /2}}$,
\item \label{brezg}if $\alpha = 1$, $\|u(t)\|_{L^\infty} \leq C_n \|u_0\|_{H^{1 /2}}^2 \sqrt{\log \left( 1+\dfrac{\|u(t)\|_{H^{1+n}}^2}{C^2\|u_0\|_{H^{1/2}}^4} \right)}$.
\end{enumerate}
\end{lemme}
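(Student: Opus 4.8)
The plan is to treat the three items in order of increasing difficulty, the last one being the only real obstacle.

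For \eqref{alpha/2} I would rely entirely on the conservation laws. Mass conservation gives $\|u(t)\|_{L^2} = \|u_0\|_{L^2}$, while the conserved Hamiltonian controls the homogeneous part: since the defocusing nonlinearity makes $\frac14\|u\|_{L^4}^4$ nonnegative, one has $\||D|^{\alpha/2}u(t)\|_{L^2}^2 \le 2\,\mathcal{H}_\alpha(u(t)) = 2\,\mathcal{H}_\alpha(u_0)$. It then remains to bound $\mathcal{H}_\alpha(u_0)$ by a power of $\|u_0\|_{H^{\alpha/2}}$, which follows from the Sobolev embedding $H^{\alpha/2}(\mathbb{T}) \hookrightarrow L^4(\mathbb{T})$ (valid because $\alpha/2 \ge 1/2 > 1/4$), yielding $\|u_0\|_{L^4}^4 \le C\|u_0\|_{H^{\alpha/2}}^4$. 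Combining, $\|u(t)\|_{H^{\alpha/2}}^2 \lesssim \|u_0\|_{H^{\alpha/2}}^2 + \|u_0\|_{H^{\alpha/2}}^4$, from which the stated quadratic bound follows.

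Item \eqref{sobo} is then immediate: when $\alpha > 1$ we have $\alpha/2 > 1/2$, so the subcritical embedding $H^{\alpha/2}(\mathbb{T}) \hookrightarrow L^\infty(\mathbb{T})$ holds, and I would simply write $\|u(t)\|_{L^\infty} \le C_\alpha\|u(t)\|_{H^{\alpha/2}}$ and insert \eqref{alpha/2}.

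The heart of the lemma is \eqref{brezg}, where $\alpha = 1$ places us exactly at the critical index $1/2$, for which the embedding into $L^\infty$ fails and a logarithmic loss is unavoidable. Here I would prove a Brezis--Gallouët-type inequality by hand via a frequency cutoff. Writing $\|u\|_{L^\infty} \le \sum_k |u_k|$ for $u = \sum_k u_k e^{ikx}$ and splitting at $|k| \le N$, Cauchy--Schwarz bounds the low modes by $C\sqrt{\log(2+N)}\,\|u\|_{H^{1/2}}$ (the divergent sum $\sum_{|k|\le N}(1+|k|)^{-1}$ producing the logarithm) and the high modes by $C(1+N)^{-(n+1/2)}\|u\|_{H^{1+n}}$. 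Choosing $N$ so that the high-frequency term is comparable to $\|u\|_{H^{1/2}}$, that is $1+N \sim (\|u\|_{H^{1+n}}/\|u\|_{H^{1/2}})^{1/(n+1/2)}$, yields
\[\|u(t)\|_{L^\infty} \le C_n\,\|u(t)\|_{H^{1/2}}\sqrt{\log\Bigl(1+\tfrac{\|u(t)\|_{H^{1+n}}^2}{\|u(t)\|_{H^{1/2}}^2}\Bigr)}.\]
The last, slightly delicate step is to replace $\|u(t)\|_{H^{1/2}}$ by the time-uniform quantity furnished by \eqref{alpha/2}: the function $a \mapsto a\sqrt{\log(1+M/a^2)}$ is nondecreasing in $a>0$ (a consequence of $\log(1+y)\ge y/(1+y)$), so one may freely overestimate $\|u(t)\|_{H^{1/2}}$ by its upper bound $C\|u_0\|_{H^{1/2}}^2$ in both the prefactor and the logarithm, producing exactly the claimed form. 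I expect this critical-index estimate---the frequency splitting together with the correct monotone substitution---to be where all the care is needed; \eqref{alpha/2} and \eqref{sobo} are routine consequences of the conserved quantities.
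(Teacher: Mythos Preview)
Your proposal is correct and follows essentially the same route as the paper: conservation of $Q$ and $\mathcal{H}_\alpha$ plus Sobolev embedding for \ref{alpha/2}, the subcritical embedding $H^{\alpha/2}\hookrightarrow L^\infty$ for \ref{sobo}, and the Brezis--Gallouët inequality together with the monotonicity of $a\mapsto a\sqrt{\log(1+M/a^2)}$ for \ref{brezg}. The only cosmetic difference is that the paper quotes \eqref{brezg-init} as a known inequality, whereas you reprove it via the frequency cutoff; both arrive at the same monotone substitution to pass from $\|u(t)\|_{H^{1/2}}$ to its time-uniform bound.
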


We justify briefly these inequalities : \ref{alpha/2} derives from the conservation of $Q+\mathcal{H}_\alpha$ together with the Sobolev embeddings in dimension one, and \ref{sobo} is a consequence of \ref{alpha/2} and the injection $H^{\alpha /2}\hookrightarrow L^\infty$. As for \ref{brezg}, it follows from the classical Brezis-Gallouët inequality : for $s>\frac{1}{2}$, and $w\in H^s(\mathbb{T})$,
\begin{equation}\label{brezg-init}
\|w\|_{L^\infty} \leq C_s \|w\|_{H^{1/2}}\left[ \log \left(1+\frac{\|w\|_{H^s}}{\|w\|_{H^{1/2}}} \right) \right] ^{1/2}
\end{equation}
(see e.g. \cite{ann}). Here, to infer \ref{brezg}, we begin by squaring the ratio of the two norms, and we then take into account the fact that the function $x\mapsto x\sqrt{\log (1+\frac{1}{x^2})}$ is increasing.

\vspace{1em}
To prove the estimates we have in mind, we are going to establish an inequality between the $H^{\alpha +n}$-norm of the solution and its derivative, and apply a Gronwall lemma. As announced, we define for this purpose a well-chosen nonlinear functional\footnote{From now on, the time-dependence of the terms will always be implicit. In addition, we will always restrict ourselves to nonnegative times $t\geq 0$, since it is possible to reverse the evolution of \eqref{mmt} via the transformation $u(t) \leftrightarrow \bar{u}(-t)$.} :
\begin{equation}\label{energy}
\mathcal{E}_{\alpha ,n}(u) := \|u\|_{L^2}^2 + \||D|^{\alpha +n}u\|_{L^2}^2
+ 2\Re e(|D|^{\alpha +n}u, |D|^{n}(|u|^2u)) -\textstyle\frac{1}{2} \displaystyle\||D|^{\frac{\alpha}{2}+n}(|u|^2)\|_{L^2}^2.
\end{equation}
Roughly speaking, $\mathcal{E}_{\alpha ,n}$ is a perturbation of the square of the $H^{\alpha +n}$-norm of $u$ (the first two terms) by the means of two corrective quantities. First of all, let us show that the latter do not substantially modify the size of $\|u\|_{H^{\alpha +n}}^2$. To turn this into a rigorous statement, we begin by restricting ourselves to intervals of time on which $\|u\|_{H^{\alpha +n}}$ is larger than a certain constant $\mathcal{M}$ depending on $\|u_0\|_{H^{\alpha /2}}$, and we show that on such intervals,
\begin{equation}\label{ordreE} \frac{1}{2}\|u\|_{H^{\alpha +n}}^2 \leq \mathcal{E}_{\alpha ,n} (u) \leq 2\|u\|_{H^{\alpha +n}}^2,
\end{equation}
for a suitable choice of $\mathcal{M}$ which we precise later.

Set $J_1(u):=2\Re e(|D|^{\alpha +n}u, |D|^{n}(|u|^2u))$. We can write\footnote{The symbol $\lesssim$ is understood as refering to constants depending only on $n$, or absolute constants, whose explicit form is not particularly meaningful.}
\begin{align*}
|J_1(u)| &\lesssim \||D|^{\alpha +n}u\|_{L^2}\||D|^{n}(|u|^2u)\|_{L^2}
\\& \lesssim \|u\|_{H^{\alpha +n}}\|u\|_{L^\infty}^2\|u\|_{H^{n}},
\end{align*}
where we used the tame estimates for products in $H^s$, for $s\geq 0$. Then interpolate $H^{n}$ between $H^{\alpha +n}$ and $H^{\alpha /2}$ (or just bound the $L^2$-norm by a constant if $n=0$), and using lemma \ref{baba}, get
\[ |J_1(u)| \lesssim \|u\|_{H^{\alpha +n}}^{2-\varepsilon_{\alpha ,n}}\|u_0\|_{H^{\alpha /2}}^{2\varepsilon_{\alpha ,n}}\|u\|_{L^\infty}^2 , \quad \text{where }\varepsilon_{\alpha ,n}:= \min \left( 1, \frac{2\alpha}{2n+\alpha}\right) . \]
On the other side, introducing $J_2(u):=-\||D|^{\frac{\alpha}{2}+n}(|u|^2)\|_{L^2}^2$, we similarly obtain :
\[ |J_2(u)| \lesssim \|u\|_{L^\infty}^2\|u\|^2_{H^{\frac{\alpha}{2}+n}} \lesssim \|u\|_{H^{\alpha +n}}^{2-\varepsilon_{\alpha ,n}}\|u_0\|_{H^{\alpha /2}}^{2\varepsilon_{\alpha ,n}}\|u\|_{L^\infty}^2. \]
In sight of \ref{sobo} and \ref{brezg}, all these estimates show that $J_1(u)$ and $J_2(u)$ are of lower order than $\|u\|_{H^{\alpha +n}}^2$. More precisely, if we now set, for real $x$,
\[ g_\alpha(x) := \left\lbrace \begin{aligned} &x^{\varepsilon_{\alpha ,n}/2} &\quad \text{if }\alpha >1, \\ &\frac{x^{\varepsilon_{1 ,n}/2}}{\log \left(1+\frac{x^2}{C^2\|u_0\|^4_{H^{1/2}}}\right) } &\quad \text{if }\alpha =1,\end{aligned} \right.\]
we see that it suffices to request for instance that $g_\alpha (\|u\|_{H^{\alpha +n}})\gg\|u_0\|_{H^{\alpha /2}}^{4+2\varepsilon_{\alpha ,n}}$, which holds true whenever $\|u\|_{H^{\alpha +n}}$ is greater than a certain $\mathcal{M}$. As a conclusion, \eqref{ordreE} is proved on intervals of the form $[T^*,T]$ which satisfy
\[\|u(t)\|_{H^{\alpha +n}}\geq \tilde{\mathcal{M}}:=\max\{ \mathcal{M}, 2\|u_0\|_{H^{\alpha +n}}\}, \: \forall t\in [T^*,T],\quad\text{and}\quad \|u(T^*)\|_{H^{\alpha +n}}=\tilde{\mathcal{M}}.\]

Now we study the evolution of $\mathcal{E}_{\alpha ,n} (u)$ on $[T^*,T]$. As the $L^2$-norm of $u$ is conserved, we denote by $J_0(u):= \||D|^{\alpha +n}u\|_{L^2}^2$, and compute at once :
\[\frac{d}{dt}J_0(u)=2\Re e (|D|^{\alpha +n}\dot{u}, |D|^{\alpha +n}u),\]
where the dot refers to the time-derivative, and commutes with $|D|^\rho$ for all $\rho$. According to equation \eqref{mmt}, $|D|^\alpha u=i\dot{u}-|u|^2u$, so
\[\frac{d}{dt}J_0(u)=2\Im m (|D|^{\alpha +n}\dot{u}, |D|^n\dot{u})-2\Re e (|D|^{\alpha +n}\dot{u}, |D|^{n}(|u|^2u)).\]
Because of the imaginary part, the first term of this sum is zero. As for the second one, it combines with the time-derivative of $J_1(u)$, and we thus have
\[ \begin{aligned}
\frac{d}{dt}[J_0+J_1](u)&=2\Re e (|D|^{\alpha +n}u, |D|^{n}(|u|^2u)\dot{\:})
\\ &=2\Re e \left( \partial_x^{n}|D|^\alpha u, \partial_x^{n}(\dot{u}|u|^2+ u[|u|^2]\dot{\:})\right) .
\end{aligned}\]
Applying Leibniz formula, we get three terms (or only two when $n=0$) :
\begin{align*} 2&\Re e (\partial_x^{n}|D|^\alpha u, (\partial_x^{n}\dot{u})|u|^2)
\\ &+2\Re e \left( \partial_x^{n}|D|^\alpha u, \sum_{k=1}^{n}\binom{n}{k}[(\partial_x^{n-k}\dot{u})\partial_x^k(|u|^2)+(\partial_x^ku)\partial_x^{n-k}(|u|^2)\dot{\:} \right)
\\ &+2\Re e (\partial_x^{n}|D|^\alpha u, u\partial_x^{n}(|u|^2)\dot{\:}).
\end{align*}
Each of these terms has to be estimated. The first one and the third one are more tricky, since all the time- and space-derivative are concentrated on the same function.

\underline{First term} : A simplification fortunately occurs. Rewrite
\[\partial_x^{n}|D|^\alpha u= i\partial_x^{n}\dot{u}-\partial_x^{n}(|u|^2u),\]
and observe that $\Re e (i\partial_x^{n}\dot{u}, (\partial_x^{n}\dot{u})|u|^2)=0$. The first term then equals $-2\Re e (\partial_x^{n}(|u|^2u), (\partial_x^{n}\dot{u})|u|^2)$. Let $Q_1$ be this new quantity. Assuming that $n\geq 1$, we can bound
\[ |Q_1| \lesssim \||u|^2u\|_{H^{n}}\|u\|_{L^\infty}^2\|\dot{u}\|_{H^{n}} \lesssim \|u\|_{L^\infty}^4\|u\|_{H^{n}}(\|u\|_{H^{\alpha +n}}+\|u\|_{L^\infty}^2\|u\|_{H^n}).\]
Indeed, because of the equation, we have $\|\dot{u}\|_{H^s} \lesssim \|u\|_{H^{\alpha +s}}+\|u\|_{L^\infty}^2\|u\|_{H^s}$ for any $s\geq 0$, so that $\|\dot{u}\|_{H^n}\lesssim\|u\|_{H^{\alpha +n}}$ (using again the property of the interval $[T^*,T]$). Hence $|Q_1|\lesssim \|u\|_{H^{\alpha +n}}^{2-\varepsilon_{\alpha ,n}}\|u_0\|_{H^{\alpha /2}}^{2\varepsilon_{\alpha ,n}}\|u\|_{L^\infty}^4$ with the same $\varepsilon_{\alpha ,n}$ as above (notice that it is true even if $n=0$).

\underline{Second term} : As announced, we suppose here that $n\geq 1$, and fix a $k\in \{ 1, \cdots ,n\}$. We must estimate $Q_2^{(k)}:=2\Re e ( \partial_x^{n}|D|^\alpha u, (\partial_x^{n-k}\dot{u})\partial_x^k(|u|^2))$. Using the Sobolev embedding $H^{1/4} \hookrightarrow L^4$ as well as tame estimates again, write
\[ \begin{aligned} |Q_2^{(k)}| &\lesssim \|\partial_x^{n}|D|^\alpha u\|_{L^2}\|\partial_x^{n-k}\dot{u}\|_{L^4}\|\partial_x^k(|u|^2)\|_{L^4}
\\ &\lesssim \|u\|_{H^{\alpha +n}}\|u\|_{H^{\alpha + n-k+\frac{1}{4}}}\|u\|_{L^\infty}\|u\|_{H^{k+\frac{1}{4}}}.
\end{aligned}\]
Interpolate the $H^s$-norms between $H^{\alpha /2}$ et $H^{\alpha +n}$, and get finally
\[|Q^{(k)}_2| \lesssim \|u\|_{H^{\alpha +n}}^{2-\theta( \alpha, n)}\|u_0\|_{H^{\alpha /2}}^{2+2\theta (\alpha ,n)}\|u\|_{L^\infty},\]
where $\theta (\alpha ,n):=\frac{\alpha -1}{2n +\alpha}$.

In the same way, $Q'^{(k)}_2:=2\Re e ( \partial_x^{n}|D|^\alpha u, (\partial_x^ku)\partial_x^{n-k}(|u|^2)\dot{\:})$ can be proven to be controlled by the same quantity (with the same exponents).

\underline{Third term} : This term is the most delicate. We have
\[ \begin{aligned} 2\Re e (\partial_x^{n}|D|^\alpha u, u\partial_x^{n}(|u|^2)\dot{\:})&= 2\Re e \left(\bar{u}\partial_x^{n}|D|^\alpha u, \partial_x^{n}(|u|^2)\dot{\:}\right)
\\&\simeq 2\Re e \left(\partial_x^{n}(\bar{u}|D|^\alpha u), \partial_x^{n}(|u|^2)\dot{\:}\right) ,
\end{aligned}\]
where the $\simeq$ sign means that the equality is true up to terms of order $\|u\|_{H^{\alpha +n}}^{2-\theta (\alpha ,n)}\|u_0\|_{H^{\alpha /2}}^{2+2\theta (\alpha ,n)}\|u\|_{L^\infty}$ (which we control in the same way as for the second term). Thus we would like to estimate $Q_3:=2\Re e (|D|^{n}(\bar{u}|D|^\alpha u), |D|^{n}(|u|^2)\dot{\:})$, but as $|u|^2$ is real, it appears, expanding the real part, that $Q_3=(|D|^{n}(\bar{u}|D|^\alpha u+u|D|^\alpha \bar{u}), |D|^{n}(|u|^2)\dot{\:})$, which combines with the time-derivative of $J_2(u)$, and finally leads to the following expression :
\begin{equation}\label{leib} \left(|D|^{n}\left[ \bar{u}|D|^\alpha u+u|D|^\alpha \bar{u}-|D|^\alpha (\bar{u}u)\right], |D|^{n}(|u|^2)\dot{\:}\right).
\end{equation}
Now we have a very simple Leibniz lemma on the operator $|D|^\alpha $, which we will prove in section \ref{secleib} :
\begin{lemme}\label{leiblem}
Let $\alpha \in [1,2)$. For any integer $n\in \mathbb{N}$, there is a constant $C_n>0$ depending only on $n$, such that for all function $u \in H^{\alpha +n}(\mathbb{T})$,
\[ \|\bar{u}|D|^\alpha u+u|D|^\alpha \bar{u}-|D|^\alpha (\bar{u}u)\|_{H^{n}} \leq C_n \|u\|_{H^{\alpha /2}}^{1+\frac{\alpha -1}{2n +\alpha}}\|u\|_{H^{\alpha +n}} ^{1-\frac{\alpha -1}{2n +\alpha}}.\]
\end{lemme}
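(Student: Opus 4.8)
The quantity to bound is the ``symmetric commutator''
\[
B(u) := \bar{u}|D|^\alpha u + u|D|^\alpha \bar{u} - |D|^\alpha(\bar{u}u),
\]
which measures the failure of the Leibniz rule for $|D|^\alpha$. The plan is to work entirely on the Fourier side. Writing $u = \sum_k u_k e^{ikx}$, each of the three terms is a bilinear expression in the Fourier coefficients, and the coefficient of $e^{i(j+k)x}$ in $B(u)$ (after matching indices appropriately) carries a symbol of the form $|j|^\alpha + |k|^\alpha - |j+k|^\alpha$ acting on the pair $(u_j, \overline{u_{-k}})$ or similar. The key gain is that this symbol vanishes to high order: for $\alpha \in [1,2)$ the elementary inequality $\bigl| |a|^\alpha + |b|^\alpha - |a+b|^\alpha \bigr| \lesssim (\min(|a|,|b|))^{\alpha/2}(\max(|a|,|b|))^{\alpha/2}$ (or more precisely a bound by $\min(|a|,|b|)^{\alpha-1}\max(|a|,|b|) \lesssim \min^{\alpha/2}\max^{\alpha/2}$ after comparing the two regimes) shows that the symbol is controlled by the \emph{geometric mean} of the two frequencies to the power $\alpha$. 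This is exactly the smoothing we want: one full power $|D|^\alpha$ is redistributed as $|D|^{\alpha/2}$ on each factor.

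First I would establish the pointwise symbol estimate
\[
\bigl| |j|^\alpha + |k|^\alpha - |j+k|^\alpha \bigr| \leq C_\alpha\, |j|^{\alpha/2}|k|^{\alpha/2},
\qquad \alpha \in [1,2),
\]
by splitting into the cases $|j| \leq |k|$ and $|j| \geq |k|$ and using the mean value theorem / convexity of $t \mapsto t^\alpha$ on the smaller frequency. Then, applying $\|B(u)\|_{H^n}^2 = \sum_m (1+|m|^2)^n |\widehat{B(u)}(m)|^2$ and inserting the symbol bound, the problem reduces to estimating an $H^n$-norm of a bilinear quantity whose Fourier multiplier is dominated by $|D|^{\alpha/2}$ distributed symmetrically. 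Concretely, $\|B(u)\|_{H^n} \lesssim \bigl\| (|D|^{\alpha/2}u)\cdot(|D|^{\alpha/2}\bar u)\bigr\|_{H^n}$ up to harmless lower-order pieces, and the usual tame product estimate in $H^n(\mathbb{T})$ together with the Sobolev embedding $H^{1/2+}\hookrightarrow L^\infty$ finishes the bound.

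The remaining step is the interpolation bookkeeping that produces the exact exponents. After the tame estimate I would have a product of norms of the type $\||D|^{\alpha/2}u\|_{H^n}$ and $\||D|^{\alpha/2}u\|_{L^\infty}$, i.e. norms of $u$ at regularity between $H^{\alpha/2}$ and $H^{\alpha+n}$. Interpolating each such norm between the endpoints $H^{\alpha/2}$ and $H^{\alpha+n}$ and collecting the powers should yield precisely the split $\|u\|_{H^{\alpha/2}}^{1+\theta}\|u\|_{H^{\alpha+n}}^{1-\theta}$ with $\theta = \frac{\alpha-1}{2n+\alpha}$; the particular value of $\theta$ reflects that the total derivative count on the right must match $\alpha+2n$ (namely $n$ from the $H^n$ and $\alpha$ from the redistributed symbol, against $2n$ homogeneity) so the interpolation weights are forced.

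The main obstacle I anticipate is not the product estimate itself but the sharp symbol inequality at the endpoint $\alpha = 1$, where $t \mapsto t^\alpha$ is merely Lipschitz and convexity degenerates: there the naive mean-value bound gives $\min(|j|,|k|)$ rather than $\min^{1/2}\max^{1/2}$, and one must check the two cancel correctly so that the claimed geometric-mean bound $|j|^{1/2}|k|^{1/2}$ still holds (it does, but the argument is more delicate and must handle the low-frequency interactions $|j|\approx|k|$ carefully). Making the constant $C_n$ depend only on $n$, uniformly as $\alpha \to 1^+$, is the delicate point to verify.
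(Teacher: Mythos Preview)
Your central idea---the symbol bound $\bigl||j|^\alpha+|k|^\alpha-|j+k|^\alpha\bigr|\leq 2\,|j|^{\alpha/2}|k|^{\alpha/2}$---is exactly the paper's starting point, phrased there as boundedness of $\varphi(x)=(|x|^\alpha+|1-x|^\alpha-1)/(|x|^{\alpha/2}|1-x|^{\alpha/2})$, with $\|\varphi\|_\infty=2$ \emph{uniformly} in $\alpha\in[1,2]$; so your worry about the endpoint $\alpha=1$ is unfounded, and the constant does not degenerate. After this, you correctly reduce to bounding $\|(|D|^{\alpha/2}\tilde u)\cdot\overline{|D|^{\alpha/2}\tilde u}\|_{H^n}$ (strictly speaking with $\tilde u:=\sum_k|u_k|e^{ikx}$ rather than $u$, since the symbol bound only controls absolute values of Fourier coefficients; but Sobolev norms agree for $u$ and $\tilde u$, so this is harmless).

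The gap is in the next step. Your plan is the tame product estimate followed by $H^{1/2+}\hookrightarrow L^\infty$, which gives $\|B(u)\|_{H^n}\lesssim\|u\|_{H^{\alpha/2+1/2+\varepsilon}}\|u\|_{H^{\alpha/2+n}}$. Interpolating between $H^{\alpha/2}$ and $H^{\alpha+n}$ then produces an $H^{\alpha+n}$-exponent of $\frac{2n+1+2\varepsilon}{2n+\alpha}$, not the target $\frac{2n+1}{2n+\alpha}$. Since the endpoint embedding $H^{1/2}\hookrightarrow L^\infty$ fails, you cannot take $\varepsilon=0$, and you do \emph{not} recover the stated $\theta=\frac{\alpha-1}{2n+\alpha}$; the claim ``should yield precisely'' is where the argument breaks. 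The paper avoids this loss by never passing through $L^\infty$: after distributing $|k|^n\lesssim |l|^n+|k-l|^n$, the sum becomes the $\ell^2$-norm of a convolution $v\star w$ with $v_l=|l|^{\alpha/2+n}|u_l|$ and $w_l=|l|^{\alpha/2}|u_{-l}|$, and Young's inequality $\|v\star w\|_{\ell^2}\leq\|v\|_{\ell^2}\|w\|_{\ell^1}$ together with the endpoint-free bound $\|w\|_{\ell^1}\lesssim\|w\|_{\ell^2}^{1/2}\|w\|_{h^1}^{1/2}$ (obtained by splitting $|k|\leq N$ and $|k|>N$ and optimising in $N$) gives $\|u\|_{H^{\alpha/2+n}}\|u\|_{H^{\alpha/2}}^{1/2}\|u\|_{H^{\alpha/2+1}}^{1/2}$ with no $\varepsilon$; interpolation then yields the exact exponent.
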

Such a result is better than the crude $L^\infty$-$H^{\alpha +n}$ estimate, because of the exponent of the $H^{\alpha +n}$-norm (which is strictly less than $1$ as soon as $\alpha >1$).

Consequently, expression \eqref{leib} is controlled by $\|u\|_{H^{\alpha +n}}^{2-\theta(\alpha ,n)}\|u_0\|_{H^{\alpha /2}}^{2+2\theta (\alpha ,n)}\|u\|_{L^\infty}$ as well. To sum up, only the second and the third term really matter, whence
\[ \left| \frac{d}{dt}\mathcal{E}_{\alpha ,n} (u) \right| \lesssim \|u\|_{H^{\alpha +n}}^{2-\theta(\alpha ,n)}\|u_0\|_{H^{\alpha /2}}^{2+2\theta (\alpha ,n)}\|u\|_{L^\infty}. \]

\vspace{1em}
First, assume $\alpha >1$. In this situation, we can incorporate the $L^\infty$-norm into the $H^{\alpha /2}$ one (see \ref{sobo} of lemma \ref{baba}), so for $t\in [T^*,T]$,
\[ \left| \int_{T^*}^t \frac{d}{d\tau}\mathcal{E}_{\alpha ,n} (u)d\tau \right| \leq  \int_{T^*}^t \left| \frac{d}{d\tau}\mathcal{E}_{\alpha ,n} (u) \right| d\tau \lesssim \|u_0\|_{H^{\alpha /2}}^{4+2\theta (\alpha ,n)}\int_{T^*}^t \|u(\tau )\|_{H^{\alpha +n}}^{2-\theta (\alpha ,n)} d\tau .\]
Furthermore, remembering our estimates \eqref{ordreE} on $\mathcal{E}_{\alpha ,n} (u)$,
\[ \left| \int_{T^*}^t \frac{d}{d\tau}\mathcal{E}(u)d\tau \right| = |\mathcal{E}(u)(t)-\mathcal{E}(u)(T^*)| \geq \frac{1}{2}\|u(t)\|_{H^{\alpha +n}}^2 - 2\|u(T^*)\|_{H^{\alpha +n}}^2.\]
Let $f(t):=\|u(t)\|_{H^{\alpha +n}}^2$. The above calculation ensures that for some $C_{\alpha ,n}>0$ depending on $\alpha$ and $n$,
\[ f(t) \leq 4f(T^*)+C_{\alpha ,n}\|u_0\|_{H^{\alpha /2}}^{4+2\theta (\alpha ,n)}\int_{T^*}^tf(\tau )^{1-\frac{1}{2}\theta (\alpha ,n)} d\tau .\]
Now $\theta (\alpha ,n)$ is positive. A "Gronwall's lemma" argument (which is also known as "Osgood's lemma") thus proves that $f(t) \leq 4f(T^*)(1+C|t-T^*|)^A$ for $t\in [T^*,T]$. Notice that the value of $f(T^*)$, \emph{i.e.} of $\tilde{\mathcal{M}}$, only depends on the value of $\|u_0\|_{H^{\alpha +n}}$. Moreover, this inequality remains true even for $t$ outside any interval of type $[T^*,T]$, so it globally holds and the first part of theorem \ref{theommt} is proved. At last, the constant $A$ can be set to $\frac{2}{\theta(\alpha ,n)}$, \emph{i.e.} $\frac{4n+2\alpha}{\alpha -1}$, which implies the statement.

\vspace{1em}
It remains to consider the case $\alpha =1$. This time, $\theta (1, n)=0$ for any $n$, and in addition, the $L^\infty$-norm of $u$ is not bounded by a constant anymore. Using part \ref{brezg} of lemma \ref{baba}, and going on as in the previous case with an auxiliary function $g(t)=\|u(t)\|_{H^{1 +n}}^2/C^2\|u_0\|_{H^{1/2}}^4$, we find,
\[ g(t) \leq 4g(T^*)+C_n\|u_0\|_{H^{1/2}}^4\int_{T^*}^tg(\tau )\sqrt{\log (1+g(\tau ))} d\tau ,\]
for all $t\in [T^*,T]$. Osgood's lemma then yields $g(t) \leq Cg(T^*)e^{B|t|^2}$, and the proof of theorem \ref{theohw} is complete.

\subsection{A Leibniz lemma}\label{secleib}
We eventually turn to the
\begin{proof}[Proof of lemma \ref{leiblem}]
Let $u=\sum_{k\in \mathbb{Z}} u_ke^{ikx} \in H^{\alpha +n}(\mathbb{T})$. We intend to control the $H^n$-norm of $F_\alpha(u):=\bar{u}|D|^\alpha u+u|D|^\alpha \bar{u}-|D|^\alpha (\bar{u}u)$. A straightforward computation yields
\[ F_\alpha(u) = \sum_{k=-\infty}^{+\infty} e^{ikx} \left( \sum_{l=-\infty}^{+\infty} \left( |l|^\alpha +|k-l|^\alpha -|k|^\alpha \right) u_l\overline{u_{l-k}}\right).\]
The key idea is to replace $|l|^\alpha +|k-l|^\alpha -|k|^\alpha$ by a more symmetric coefficient, and then to recognize a convolution product. More precisely, define a continuous function $\varphi$ of the real variable $x\in \mathbb{R}\setminus \{0,1\}$ with 
\[ \varphi (x):=\frac{|x|^\alpha +|1-x|^\alpha -1}{|x|^{\frac{\alpha}{2}}|1-x|^{\frac{\alpha}{2}}},\]
and $\varphi (0)=\varphi (1)=0$. For every $l$, $k\in \mathbb{Z}$ with $k\neq 0$, we then have $|l|^\alpha +|k-l|^\alpha -|k|^\alpha= \varphi (\frac{l}{k})|l|^{\frac{\alpha}{2}}|k-l|^{\frac{\alpha}{2}}$, which is true even if $k=0$, once we have assigned $\varphi (\pm \infty )=2$, by convention.

Now, to show that $\varphi $ is bounded on $\mathbb{R}$, it suffices to check that it is bounded near $0$ and $-\infty$, and then to invoke the symmetry of $\varphi$ around $x=\frac{1}{2}$. And actually, since $\alpha \leq 2$, $\lim_{x\to 0}\varphi (x)=0$ and $\lim_{x\to -\infty}\varphi (x)=2$. Studying the variations of $\varphi$ even show that $\|\varphi \|_{L^\infty(\mathbb{R})}=2$, and hence is independent of $\alpha$.

As a consequence,
\[ \begin{aligned} &\sum_{k=-\infty}^{+\infty} |k|^{2n}\left| \sum_{l=-\infty}^{+\infty} (|l|^\alpha +|k-l|^\alpha -|k|^\alpha )u_l \overline{u_{l-k}} \right| ^2 \\
&\leq 2 \sum_{k=-\infty}^{+\infty}|k|^{2n} \left| \sum_{l=-\infty}^{+\infty} |l|^{\frac{\alpha}{2}}|k-l|^{\frac{\alpha}{2}}|u_l||\overline{u_{l-k}}|\right| ^2 \\
&\leq 2^{2n} \sum_{k=-\infty}^{+\infty}\left[ \left| \sum_{l=-\infty}^{+\infty} |l|^{\frac{\alpha}{2}+n}|k-l|^{\frac{\alpha}{2}}|u_l||\overline{u_{l-k}}|\right| ^2 +  \left| \sum_{l=-\infty}^{+\infty} |l|^{\frac{\alpha}{2}}|k-l|^{\frac{\alpha}{2}+n}|u_l||\overline{u_{l-k}}|\right| ^2 \right], \end{aligned}\]
because of the inequality $|k|^n \leq 2^{n-1}(|l|^n+|k-l|^n)$, satisfied for any $n\geq 1$, any $k$ and $l$.

When $n=0$, the result is rather immediate, since
\[ \sum_{k=-\infty}^{+\infty} \left| \sum_{l=-\infty}^{+\infty} |l|^{\frac{\alpha}{2}}|k-l|^{\frac{\alpha}{2}}|u_l||\overline{u_{l-k}}|\right| ^2 = \left\| \left| |D|^{\frac{\alpha}{2}}\tilde{u}\right| ^2\right\| ^2_{L^2}=\left\| |D|^{\frac{\alpha}{2}}\tilde{u}\right\| _{L^4}^4,\]
where $\tilde{u}:=\sum_{k\in \mathbb{Z}} |u_k|e^{ikx}$. Using as always the embedding $L^4 \hookrightarrow H^{1/4}$, and interpolating between $\alpha /2$ and $\alpha$, we get the result.

From here on, we suppose $n\neq 0$. We shall deal with the first part of the above sum (the last one follows identically). We consider two sequences $v:=(|l|^{\frac{\alpha}{2}+n}|u_l|)_{l\in \mathbb{Z}}$ and $w:=(|l|^{\frac{\alpha}{2}}|\overline{u_{-l}}|)_{l\in \mathbb{Z}}$. With these notations,
\[\sum_{k=-\infty}^{+\infty}\left| \sum_{l=-\infty}^{+\infty} |l|^{\frac{\alpha}{2}+n}|k-l|^{\frac{\alpha}{2}}|u_l||\overline{u_{l-k}}|\right| ^2 = \|v\star w\|_{\ell ^2}^2 .\]
By Schur's lemma, $\|v\star w\|_{\ell ^2} \leq \|v\|_{\ell ^2}\|w\|_{\ell ^1}$. But $\|v\|_{\ell ^2}\leq \|u\|_{H^{\frac{\alpha}{2}+n}}$. As for $\|w\|_{\ell ^1}$, write
\[ \begin{aligned}\sum_{k=-\infty}^{+\infty} |w_k| &= \sum_{|k|\leq N} |w_k| + \sum_{|k|> N} |w_k|
\\ &\leq \left(\sum_{|k|\leq N}|w_k|^2\right) ^{\frac{1}{2}}\sqrt{2N+1} + \left(\sum_{|k|> N}|w_k|^2(1+|k|^2)\right) ^{\frac{1}{2}}\left(\sum_{|k|> N}\frac{1}{1+|k|^2}\right) ^{\frac{1}{2}}
\\ &\leq \sqrt{2N+1}\|w\|_{\ell ^2}+\sqrt{\frac{2}{N}}\|w\|_{h^1}.
\end{aligned}\]
Taking the infimum over $N \in \mathbb{N}$, we finally get $\|w\|_{\ell ^1} \leq \sqrt{\|w\|_{\ell ^2}\|w\|_{h^1}}$. In our case, $\|w\|_{\ell ^2}\leq\|u\|_{H^{\frac{\alpha}{2}}}$, and similarly $\|w\|_{h^1}\leq\|u\|_{H^{\frac{\alpha}{2}+1}}$.

Now we can conclude :
\[ \begin{aligned} \|F_\alpha (u)\|_{H^{n}}^2 &\lesssim \|u\|_{H^{\frac{\alpha}{2}+n}}^2\|u\|_{H^{\frac{\alpha}{2}}}\|u\|_{H^{\frac{\alpha}{2}+1}} \\
& \lesssim \|u\|_{H^\frac{\alpha}{2}}^{2\left( 1+\frac{\alpha -1}{2n+\alpha}\right) }\|u\|_{H^{\alpha +n}}^{2\left( 1-\frac{\alpha -1}{2n+\alpha}\right) } ,\end{aligned}\]
which corresponds to the statement.
\end{proof}

\begin{rem}
Lemma \ref{leiblem} probably takes the best advantage of the form of \eqref{leib}. There is still a kind of "virial" identity which holds for any value of $\alpha$ :
\[\frac{d}{dt}|u|^2=i\left(u|D|\bar{u}-\bar{u}|D|u \right) ,\]
but the inequality $\|u|D|\bar{u}-\bar{u}|D|u\|_{L^2}\lesssim \|u\|_{H^{1/2}}\|u\|_{H^1}$, for instance, is false. As a counter-exemple, choose
\[u_N(x)=\dfrac{1}{\sqrt{\log N}}\sum_{n=1}^N\frac{e^{inx}}{n},\]
and let $N\to +\infty$.
\end{rem}

\subsection{Quadratic half-wave equations}
We come to the system of equations \eqref{quad} and the proof of \eqref{est3}. To see the Hamiltonian structure of \eqref{quad}, choose $L^2(\mathbb{T}) \times L^2(\mathbb{T})$ as a phase space, endowed with the inner product $\langle (u_1,u_2) , (v_1,v_2) \rangle := (u_1,v_1)+(u_2,v_2)$. Taking the imaginary part of $\langle \cdot ,\cdot \rangle $ as our symplectic form, we infer from a simple calculation that the Hamiltonian $\tilde{\mathcal{H}}(u_1,u_2):=\frac{1}{2}[(|D|u_1,u_1)+(|D|u_2,u_2) + \int_\mathbb{T}\Re e(u_1^2\overline{u_2})]$ is associated to the system \eqref{quad}.

Notice that the functional $\tilde{\mathcal{H}}$ is invariant under the flow $(u_1,u_2)\mapsto (e^{i\theta}u_1,e^{2i\theta}u_2)$, with $\theta$ varying in $\mathbb{R}$. It follows then from the Noether theorem that $\tilde{\mathcal{Q}}(u_1,u_2):=\|u_1\|_{L^2}^2+2\|u_2\|_{L^2}^2$ is a conservation law for the system \eqref{quad}. As a consequence, the $L^2$-norms of $u_1$, $u_2$ stay bounded along the flow lines. In addition, the conservation of $\tilde{\mathcal{H}}$ as well as $\tilde{\mathcal{Q}}$ claims the uniform boundedness of $\|u_1\|_{H^{1/2}}$ and $\|u_2\|_{H^{1/2}}$ with respect to time.

Immediately, we get, for each $n\geq 0$, \[\|u_1\|_{H^{1+n}}, \|u_2\|_{H^{1+n}}\lesssim e^{B|t|^2},\]where $B>0$ is independent of time : this follows from a straightforward application of inequality \ref{brezg} in lemma \ref{baba}. But now, set $F(t):=( \|u_1\|_{H^{1+n}}^2+ \|u_2\|^2_{H^{1+n}})$. Here we won't repeat the details of section \ref{ozv}, but we suppose from the beginning that $F$ is "big enough", and we compute
\[ \begin{aligned} \frac{d}{dt}&\left[ \| |D|^{1+n}u_1\|_{L^2}^2 + \| |D|^{1+n}u_2\|_{L^2}^2\right]\\&= 2\Re e\left[ (|D|^{1+n}\dot{u_1},|D|^{1+n}u_1)+(|D|^{1+n}\dot{u_2},|D|^{1+n}u_2)\right] \\ &= -2\Re e\left[ (|D|^{1+n}\dot{u_1}, |D|^n(u_2\overline{u_1})) + \left( |D|^{1+n}\dot{u_2},|D|^n\left(\frac{u_1^2}{2}\right) \right) \right] . \end{aligned} \]
Then, correcting the initial quantity with terms of lower order than $F(t)$, we rather estimate
\[A:= 2\Re e\left[ (|D|^{1+n}u_1, |D|^n(\dot{u_2}\overline{u_1}+u_2\dot{\overline{u_1}})) + \left( |D|^{1+n}u_2,|D|^n(\dot{u_1}u_1) \right) \right] .\]
Now apply the Leibniz formula :
\begin{equation}\label{quadleib}
\begin{aligned} A=2&\Re e \left[ (\partial_x^{n}|D| u_1, (\partial_x^{n}\dot{u_2})\overline{u_1})+(\partial_x^{n}|D| u_2, (\partial_x^{n}\dot{u_1})u_1) \right]
\\ &+2\Re e \left( \partial_x^{n}|D| u_1, \sum_{k=0}^{n-1}\binom{n}{k}[(\partial_x^{k}\dot{u_2})\partial_x^{n-k}\overline{u_1}+(\partial_x^{n-k}u_2)\partial_x^{k}\dot{\overline{u_1}} \right)
\\ &+2\Re e \left( \partial_x^{n}|D| u_2, \sum_{k=0}^{n-1}\binom{n}{k}(\partial_x^{k}\dot{u_1})\partial_x^{n-k}u_1 \right)
\\ &+2\Re e (\partial_x^{n}|D|u_1, u_2\partial_x^{n}\dot{\overline{u_1}}). \end{aligned}
\end{equation}
We aim at showing that each of these terms is controlled by $F(t)$.

Exactly as before, we have a cancellation occuring in the first line of \eqref{quadleib} : replace $\dot{u_1}$ by $i|D|u_1$, and $\dot{u_2}$ by $i|D|u_2$ (here again the nonlinearities can be neglected), and observe that
\[ 2\Im m \left[ (\partial_x^{n}|D| u_1, (\partial_x^{n}|D|u_2)\overline{u_1})+(\partial_x^{n}|D| u_2, (\partial_x^{n}|D|u_1)u_1) \right] = 0.\]

Concerning the second and the third line of \eqref{quadleib}, straightforward Sobolev estimates and interpolation inequalities are enough to conclude.

As for the fourth line of \eqref{quadleib}, where all time- and space-derivatives concentrate on the same function, we need an equivalent of lemma \ref{leiblem} for the operator $|D|^{1/2}$, namely :

\begin{lemme}[Kenig-Ponce-Vega, see \cite{KPV}] \label{KPV}
For $f$, $g : \mathbb{T}\to \mathbb{C}$, we have
\[ \|f|D|^sg + g|D|^sf-|D|^s(fg)\|_{L^p} \lesssim \||D|^{s_1}f\|_{L^{p_1}}\||D|^{s_2}g\|_{L^{p_2}}, \]
provided that $0<s<1$, $s=s_1+s_2$ and $s_1$, $s_2\geq 0$, and on the other side, $\frac{1}{p}=\frac{1}{p_1}+\frac{1}{p_2}$, with $p$, $p_1$, $p_2 \in (1,+\infty)$.
\end{lemme}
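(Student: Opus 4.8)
The frequency-space, Schur-lemma argument used for Lemma \ref{leiblem} is tied to $L^2$ and will not reach a general $L^p$ estimate; the plan is instead to exploit the hypersingular-integral representation of $|D|^s$. For $0<s<1$, the operator $|D|^s$ on $\mathbb{T}$ can be written as $|D|^s h(x)=\int_{\mathbb{T}}\bigl(h(x)-h(x+y)\bigr)K_s(y)\,dy$, with a (periodized Riesz) kernel satisfying $|K_s(y)|\lesssim|y|^{-1-s}$ and behaving like $|y|^{-1-s}$ near $y=0$; note that the difference $h(x)-h(x+y)$ annihilates constants, matching $|0|^s=0$. Writing $F_s(f,g):=f|D|^sg+g|D|^sf-|D|^s(fg)$ and inserting this formula into each of the three terms, a direct expansion shows that the integrand collapses to the perfect product $\bigl(f(x)-f(x+y)\bigr)\bigl(g(x)-g(x+y)\bigr)$, leaving the single clean identity
\[ F_s(f,g)(x)=\int_{\mathbb{T}}\bigl(f(x)-f(x+y)\bigr)\bigl(g(x)-g(x+y)\bigr)K_s(y)\,dy. \]

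With this identity in hand I would use $|K_s(y)|\lesssim|y|^{-1-s}$ together with the splitting $|y|^{-1-s}=|y|^{-(\frac12+s_1)}\,|y|^{-(\frac12+s_2)}$ afforded by $s=s_1+s_2$, and apply the Cauchy--Schwarz inequality in the variable $y$. This produces the pointwise bound
\[ |F_s(f,g)(x)|\lesssim S_{s_1}f(x)\,S_{s_2}g(x),\qquad S_\sigma h(x):=\Bigl(\int_{\mathbb{T}}\frac{|h(x)-h(x+y)|^2}{|y|^{1+2\sigma}}\,dy\Bigr)^{1/2}. \]
A Hölder inequality in $x$ with $\frac1p=\frac1{p_1}+\frac1{p_2}$ then reduces the lemma to the two one-function estimates $\|S_{s_1}f\|_{L^{p_1}}\lesssim\||D|^{s_1}f\|_{L^{p_1}}$ and $\|S_{s_2}g\|_{L^{p_2}}\lesssim\||D|^{s_2}g\|_{L^{p_2}}$.

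The remaining inequality $\|S_\sigma h\|_{L^q}\lesssim\||D|^\sigma h\|_{L^q}$, valid for $0<\sigma<1$ and $1<q<\infty$, is exactly the square-function (Strichartz) characterization of the fractional Sobolev space. For $q=2$ it is immediate from Plancherel, since $\int(2-2\cos(\xi y))|y|^{-1-2\sigma}\,dy\asymp|\xi|^{2\sigma}$; the general case is a standard Littlewood--Paley / $g$-function argument, whose periodic version holds verbatim. Applying it to both factors closes the estimate directly on $\mathbb{T}$, with no transference step needed.

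The only genuinely harmonic-analytic input --- and thus the expected main obstacle --- is this square-function equivalence for $q\neq2$; the rest of the argument is the algebraic factorisation identity followed by Hölder. A secondary technical point is the degenerate endpoint $s_1=0$ or $s_2=0$, where $S_0$ diverges and the scheme above breaks down: that case is the classical commutator estimate and can be recovered separately (by interpolation, or directly from \cite{KPV}). In the application to \eqref{quad}, however, one splits $s=\tfrac12$ symmetrically as $s_1=s_2=\tfrac14>0$, so the generic argument suffices.
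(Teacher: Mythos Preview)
The paper does not prove this lemma: it is simply quoted with a reference to \cite{KPV}, so there is no ``paper's own proof'' to compare against. Your integral-representation argument, leading to the pointwise bound $|F_s(f,g)|\lesssim S_{s_1}f\cdot S_{s_2}g$ and then the Strichartz square-function characterization of $\dot H^{\sigma,q}$, is indeed a clean and standard route to the inequality in the open range $s_1,s_2>0$.

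There is, however, a factual slip in your final paragraph. You write that in the application to system \eqref{quad} one splits $s=\tfrac12$ symmetrically as $s_1=s_2=\tfrac14$, so that the endpoint is never needed. That is not what the paper does: just below \eqref{III} the lemma is invoked with $s=\tfrac12$, $s_1=\tfrac12$, $s_2=0$, $p=\tfrac43$, $p_1=4$, $p_2=2$, precisely to avoid putting any extra fractional derivative on the highest-order factor $\partial_x^n\dot{\overline{u_1}}$. So the endpoint $s_2=0$ is exactly the case that matters here, and your scheme does not cover it (as you correctly note, $S_0$ diverges).

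Your proposed fallback --- ``by interpolation, or directly from \cite{KPV}'' --- is not really a proof. Interpolation in the parameter $s_2$ between strictly positive values does not yield the boundary case $s_2=0$ without an a~priori endpoint bound to interpolate against; and citing \cite{KPV} for the endpoint is circular, since that is the lemma you are trying to prove. The endpoint $s_2=0$ is the genuine Kato--Ponce/Coifman--Meyer commutator estimate and requires a different mechanism (paraproduct decomposition and Calder\'on--Zygmund theory, or the original Kenig--Ponce--Vega argument). If you want a self-contained proof that serves the paper's application, that is the piece you still need to supply.
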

With this lemma, we can write
\begin{align} 2\Re e (\partial_x^{n}|D|u_1, u_2\partial_x^{n}&\dot{\overline{u_1}})= 2\Re e (|D|^{1/2}(\partial_x^{n}u_1), (|D|^{1/2}u_2)\partial_x^{n}\dot{\overline{u_1}}) \label{I}\\
&+2\Re e (|D|^{1/2}(\partial_x^{n}u_1), u_2|D|^{1/2}(\partial_x^{n}\dot{\overline{u_1}})) \label{II} \\ 
&+2\Re e (|D|^{1/2}(\partial_x^{n}u_1), |D|^{1/2}\left[u_2\partial_x^{n}\dot{\overline{u_1}}\right] -(|D|^{1/2}u_2)\partial_x^{n}\dot{\overline{u_1}} -u_2|D|^{1/2}(\partial_x^{n}\dot{\overline{u_1}})) .\label{III}
\end{align}
We estimate separately
\[ \begin{aligned} |\eqref{I}| &\lesssim \||D|^{1/2}\partial_x^nu_1\|_{L^4}\||D|^{1/2}u_2\|_{L^4}\|\partial_x^n\dot{\overline{u_1}}\|_{L^2}\\
&\lesssim \|u_1\|_{H^{\frac{3}{4}+n}}\|u_2\|_{H^\frac{3}{4}}\|u_1\|_{H^{1+n}} \\
&\lesssim (\|u_1^0\|_{H^{1/2}}^2+\|u_2^0\|_{H^{1/2}}^2)F(t).
\end{aligned}\]
On the other hand, \eqref{II} can be rewritten as $2\Re e (|D|^{1/2}(\partial_x^{n}\dot{u_1})|D|^{1/2}(\partial_x^{n}u_1), u_2)$, \textit{i.e.}
\[ \frac{d}{dt}\left[ \Re e \left( (\partial_x^{n}|D|^{1/2}u_1)^2, u_2 \right) \right] - \Re e \left( (\partial_x^{n}|D|^{1/2}u_1)^2, \dot{u_2}\right) .\]
Thus, perturbing the initial quantity by a term of lower order than $F(t)$, it is enough to control
\[ \left| \Re e \left( (\partial_x^{n}|D|^{1/2}u_1)^2, |D|u_2\right) \right| \leq \|u_1\|^2_{H^{\frac{3}{4}+n}}\|u_2\|_{H^1} \lesssim (\|u_1^0\|^2_{H^{1/2}}+\|u_2^0\|^2_{H^{1/2}})F(t).\]
Eventually, using $L^4$-$L^\frac{4}{3}$ duality, and lemma \ref{KPV} with $s=\frac{1}{2}$, $s_1=\frac{1}{2}$, $s_2=0$, and $p=\frac{4}{3}$, $p_1=4$, $p_2=2$, we infer that
\[ |\eqref{III}| \lesssim \||D|^{1/2}\partial_x^nu_1\|_{L^4}\||D|^{1/2}u_2\|_{L^4}\|\partial_x^n\dot{\overline{u_1}}\|_{L^2}\lesssim (\|u_1^0\|_{H^{1/2}}^2+\|u_2^0\|^2_{H^{1/2}}) F(t), \]
as above.

To sum up, there exist a constant $C_n$ depending only on $n$, such that, for all times $t\in \mathbb{R}$, $F(t) \leq 4F(0) +C_n(\|u_1^0\|^2_{H^{1/2}}+\|u_2^0\|^2_{H^{1/2}})\int_0^t F(s)ds$, which means, by Gronwall's lemma, that
\[ F(t) \leq 4F(0)e^{C_n(\|u_1^0\|_{H^{1/2}}^2+\|u_2^0\|_{H^{1/2}}^2)|t|}.\]
This tells us that $\|u_1\|_{H^{1+n}}$ and $\|u_2\|_{H^{1+n}}$ grow at most exponentially, and theorem \ref{theo^2} is proved.

\section{Dispersion estimates and Bourgain spaces}
We come back to equation \eqref{mmt} and to the end of the proof of theorem \ref{theommt}. We have to deal now with the case when $\alpha <1$. In this case, the boundedness of the $H^{\alpha /2}$-norm of the solutions is not enough to get a pointwise control of their $L^\infty$-norm. In other terms, we need to prove a Strichartz estimate for solutions of \eqref{mmt}.

\subsection{The Strichartz estimate}
From now on, $\alpha$ is fixed, with $\frac{2}{3}<\alpha <1$. For $u_0 \in \mathscr{D}'(\mathbb{T})$ and $t\in \mathbb{R}$, denote by
\[ S(t)u_0:=e^{-it|D|^\alpha}u_0= \sum_{k=-\infty}^{+\infty} \widehat{u_0}(k)e^{i(kx-|k|^\alpha t)},\]
the solution of the homogeneous equation $i\partial_t u= |D|^\alpha u$, with value $u_0$ at time $t=0$.

We are also going to use the Littlewood-Paley decomposition. For this purpose, let $\psi$ be a nonnegative $\mathcal{C}^\infty$ function on $\mathbb{R}_+$, such that $\psi >0$ on $\left[ \frac{1}{2}+\frac{1}{10},2-\frac{1}{10}\right]$ and $\psi \equiv 0$ outside $\left] \frac{1}{2},2\right[$. Without loss of generality, we can assume that $\sum_{j=1}^{+\infty} \psi(2^{-j}x) \equiv 1$ on $[2, +\infty)$. Let then $u\in \mathscr{D}'(\mathbb{T})$, and $N=2^j$ for some integer $j\geq 1$. We define
\[ \Delta_Nu:=\psi \left(\frac{|D|}{N}\right) u =\sum_{|k|\in \left[\frac{N}{2},2N \right]}\psi \left(\frac{|k|}{N}\right)\hat{u}(k)e^{ikx},\]
and $\Delta_1 u:= u-\sum_{j\geq 1} \Delta_{2^j}u$. In the sequel, capital letters will always refer to dyadic integers, and we will use the simplified notation $\sum_N$ for a sum over all dyadic integers, starting from $1$.

Let us recall a few facts about the Littlewood-Paley decomposition :
\begin{align}
u&= \sum_N \Delta_Nu ,\\
\|u\|_{H^s}^2 &\simeq \sum_N N^{2s}\|\Delta_Nu\|_{L^2}^2, \label{LPsob}\\
\|u\|_{L^p}^2 &\simeq \sum_N \|\Delta_Nu\|_{L^p}^2. \label{LPLp}
\end{align}
By the $\simeq$ sign, we just mean that the two quantities, as norms, are equivalent.

We can now state our Strichartz lemma :
\begin{lemme}[Strichartz inequality]\label{strich}
There exist a constant $C_\alpha>0$, depending on $\alpha$, such that, for every $u\in L^2(\mathbb{T})$ and every $N=2^j$,
\begin{equation}
\label{Sloc}
\|e^{-it|D|^\alpha}\Delta_Nu\|_{L^4\left( (0,1)_t, L^\infty({\mathbb{T}})\right)} \leq C_\alpha\|u\|_{L^2}N^{\frac{1}{2}-\frac{\alpha}{4}}.
\end{equation}
\end{lemme}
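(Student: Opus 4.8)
The plan is to reduce \eqref{Sloc} to a fixed-time dispersive estimate for the kernel of the propagator, and then to recombine the time variable by a $TT^*$ argument. Writing $v(t,x)=e^{-it|D|^\alpha}\Delta_Nu$, the relevant space-time kernel is
\[ K_N(\sigma,z):=\sum_{|k|\in[\frac{N}{2},2N]}\psi^2\!\left(\tfrac{|k|}{N}\right)e^{i(kz-\sigma|k|^\alpha)}, \]
so that the operator $e^{-i(t-s)|D|^\alpha}\Delta_N^2$ acts on the spatial variable by convolution with $K_N(t-s,\cdot)$ on $\mathbb{T}$. The whole estimate will follow once I control the size of $\|K_N(\sigma,\cdot)\|_{L^\infty(\mathbb{T})}$ as $\sigma$ varies.

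The key step is the dispersive bound
\[ \|K_N(\sigma,\cdot)\|_{L^\infty(\mathbb{T})}\lesssim N^{1-\frac{\alpha}{2}}\,|\sigma|^{-\frac12},\qquad 0<|\sigma|\le 1. \]
To get it I fix $z$ and read $K_N(\sigma,z)$ as an exponential sum with phase $\phi(k)=kz-\sigma|k|^\alpha$. Splitting the sum according to the sign of $k$ (so that $|k|^\alpha$ is smooth), the second derivative of the phase is $|\phi''(k)|=\alpha(1-\alpha)\,|\sigma|\,|k|^{\alpha-2}\simeq|\sigma|N^{\alpha-2}$, uniformly for $|k|\simeq N$; this is exactly the curvature that degenerates as $\alpha\to 1$ and vanishes at $\alpha=1$. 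The van der Corput lemma for exponential sums then gives $|K_N(\sigma,z)|\lesssim N|\phi''|^{1/2}+|\phi''|^{-1/2}\lesssim|\sigma|^{1/2}N^{\alpha/2}+|\sigma|^{-1/2}N^{1-\frac{\alpha}{2}}$, and since $\alpha<1$ and $|\sigma|\le 1$ both terms are $\lesssim N^{1-\frac{\alpha}{2}}|\sigma|^{-1/2}$. The smooth cut-off $\psi^2$ is harmless (a summation by parts absorbs it), and since the bound is uniform in $z$ it controls the $L^\infty_x$-norm.

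It remains to assemble the time variable. Setting $Tu=e^{-it|D|^\alpha}\Delta_Nu$, the operator $TT^*$ acts by $TT^*F(t,\cdot)=\int_0^1 K_N(t-s,\cdot)\star_x F(s,\cdot)\,ds$. Young's inequality on $\mathbb{T}$ gives, for each $t$, $\|TT^*F(t,\cdot)\|_{L^\infty_x}\le\int_0^1\|K_N(t-s,\cdot)\|_{L^\infty_x}\|F(s,\cdot)\|_{L^1_x}\,ds\lesssim N^{1-\frac{\alpha}{2}}\int_0^1|t-s|^{-\frac12}\|F(s,\cdot)\|_{L^1_x}\,ds$. Since $\sigma=t-s$ ranges in $(-1,1)$ and the kernel $|\sigma|^{-1/2}$ is precisely the one-dimensional Hardy--Littlewood--Sobolev kernel mapping $L^{4/3}_t$ into $L^4_t$, I conclude $\|TT^*\|_{L^{4/3}_tL^1_x\to L^4_tL^\infty_x}\lesssim N^{1-\frac{\alpha}{2}}$, whence $\|T\|_{L^2\to L^4_tL^\infty_x}=\|TT^*\|^{1/2}\lesssim N^{\frac12-\frac{\alpha}{4}}$, which is \eqref{Sloc}. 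The fact that $(L^\infty_x)^*$ is strictly larger than $L^1_x$ is a routine density point: it suffices to test against $F\in L^{4/3}_tL^1_x$.

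The main obstacle is the dispersive estimate itself: everything hinges on the lower bound $|\phi''|\simeq|\sigma|N^{\alpha-2}$ for the curvature, which is empty at $\alpha=1$, precisely the half-wave case where no such gain can exist. A secondary point worth stressing is the use of Hardy--Littlewood--Sobolev rather than Young in the time integral: the exponent $\tfrac12$ is critical for $L^{4/3}\to L^4$, so the clean power $N^{\frac12-\frac{\alpha}{4}}$ comes out with no logarithmic loss, whereas bounding $\|\,\|K_N(\sigma,\cdot)\|_{L^\infty_x}\,\|_{L^2_\sigma}$ and applying Young's convolution inequality would cost a factor $\sqrt{\log N}$. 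An alternative route --- Bernstein's inequality $\|\cdot\|_{L^\infty_x}\lesssim N^{1/4}\|\cdot\|_{L^4_x}$ followed by the $L^4_{t,x}$ bound $\|v\|_{L^4_{t,x}}\lesssim N^{(1-\alpha)/4}\|u\|_{L^2}$ --- reaches the same exponent but demands a more delicate count of the near-resonances $\big||k_1|^\alpha-|k_2|^\alpha-|k_1'|^\alpha+|k_2'|^\alpha\big|\lesssim 1$, which is again governed by the very same curvature.
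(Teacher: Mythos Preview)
Your argument is correct and follows the same global outline as the paper --- a pointwise-in-time dispersive bound on the kernel, followed by a $TT^*$ argument and Hardy--Littlewood--Sobolev --- but obtains the dispersive estimate by a genuinely different mechanism. The paper applies the Poisson summation formula to the kernel, reducing it to a sum of oscillatory \emph{integrals} $I_n$ indexed by $n\in\mathbb{Z}$; the terms $n\neq 0$ are dispatched by non-stationary phase (integration by parts, since $|x-2\pi n|\geq\pi$ prevents any critical point on $\supp\psi$), and the main term $I_0$ by the van der Corput lemma for integrals, with $|\phi_0''|\gtrsim N^\alpha|t|$. You instead treat the exponential \emph{sum} directly and invoke the van der Corput--Weyl inequality for sums, using $|\phi''(k)|\simeq|\sigma|N^{\alpha-2}$ on $|k|\simeq N$, together with an Abel summation to absorb $\psi^2$. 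Your route is shorter and avoids the Poisson step and the separate treatment of the $n\neq 0$ tail; the paper's route has the advantage of quoting only the more elementary integral form of van der Corput and of making transparent that only the $n=0$ period of $\mathbb{T}$ contributes. Either way the decisive quantity is the curvature $\alpha(1-\alpha)$, which vanishes at $\alpha=1$, and both arguments yield the identical bound $\|K_N(\sigma,\cdot)\|_{L^\infty}\lesssim N^{1-\alpha/2}|\sigma|^{-1/2}$, so the $TT^*$ conclusion is the same.
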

This lemma also has a non-localized version :
\begin{cor}\label{Sglob}
For every $\gamma > \frac{1}{2}-\frac{\alpha}{4}$ and every $u\in H^\gamma(\mathbb{T})$, we have
\[ \|e^{-it|D|^\alpha}u\|_{L^4\left( (0,1)_t, L^\infty({\mathbb{T}})\right)} \leq C_{\alpha,\gamma}\|u\|_{H^\gamma}.\]
\end{cor}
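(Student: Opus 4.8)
The plan is to pass from the single-block estimate of Lemma \ref{strich} to the full estimate by summing a Littlewood--Paley decomposition, the hypothesis $\gamma > \frac12-\frac\alpha4$ being exactly what makes the resulting dyadic sum converge.

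First I would write $u=\sum_N \Delta_N u$ and use that the propagator $e^{-it|D|^\alpha}$, being a Fourier multiplier, commutes with each $\Delta_N$, so that $e^{-it|D|^\alpha}u=\sum_N \Delta_N\big(e^{-it|D|^\alpha}u\big)$. Since $\|\cdot\|_{L^4((0,1)_t,L^\infty)}$ is a norm, the triangle inequality gives
\[
\|e^{-it|D|^\alpha}u\|_{L^4((0,1)_t,L^\infty)} \leq \sum_N \|e^{-it|D|^\alpha}\Delta_N u\|_{L^4((0,1)_t,L^\infty)}.
\]
The lowest block $\Delta_1 u$ involves only the Fourier modes $k\in\{-1,0,1\}$, so its contribution is bounded, pointwise in $t$, by a constant times $\|u\|_{L^2}\leq\|u\|_{H^\gamma}$, and may be absorbed into the final constant. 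For every dyadic $N=2^j$ with $j\geq 1$, Lemma \ref{strich} bounds the corresponding summand by $C_\alpha N^{\frac12-\frac\alpha4}\|\Delta_N u\|_{L^2}$.

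It then remains to sum the series $\sum_N N^{\frac12-\frac\alpha4}\|\Delta_N u\|_{L^2}$ over the high-frequency blocks. I would insert the $H^\gamma$-weight by writing $N^{\frac12-\frac\alpha4}=N^{\frac12-\frac\alpha4-\gamma}\cdot N^\gamma$ and applying the Cauchy--Schwarz inequality over the dyadic integers:
\[
\sum_N N^{\frac12-\frac\alpha4-\gamma}\big(N^\gamma\|\Delta_N u\|_{L^2}\big) \leq \Big(\sum_N N^{2(\frac12-\frac\alpha4-\gamma)}\Big)^{1/2}\Big(\sum_N N^{2\gamma}\|\Delta_N u\|_{L^2}^2\Big)^{1/2}.
\]
The second factor is $\simeq\|u\|_{H^\gamma}$ by \eqref{LPsob}. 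The first factor is a geometric series in the dyadic variable whose exponent $2(\frac12-\frac\alpha4-\gamma)$ is strictly negative precisely because $\gamma>\frac12-\frac\alpha4$; it therefore converges to a finite value depending only on $\alpha$ and $\gamma$, which I absorb into $C_{\alpha,\gamma}$. Collecting the low- and high-frequency contributions yields the claimed bound.

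There is essentially no serious obstacle: the corollary is a routine summation of the dyadic estimates, and the only point requiring care is the convergence of the geometric series, which is exactly where the strict inequality $\gamma>\frac12-\frac\alpha4$ enters (at the endpoint $\gamma=\frac12-\frac\alpha4$ the sum diverges, and one would need a Besov-type refinement). I would also remark that the same computation shows that the defining series for $e^{-it|D|^\alpha}u$ converges in $L^4((0,1)_t,L^\infty)$, so that the left-hand side is indeed well defined.
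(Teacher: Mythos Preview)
Your proof is correct and follows essentially the same route as the paper: decompose via Littlewood--Paley, apply the triangle inequality and the block estimate \eqref{Sloc}, then use Cauchy--Schwarz over the dyadic scales together with \eqref{LPsob} so that the condition $\gamma>\frac12-\frac\alpha4$ makes the remaining geometric series converge. The only cosmetic difference is that you treat the block $\Delta_1$ by hand, whereas the paper absorbs it into the same estimate (noting that \eqref{Sloc} is trivial for $N=1$).
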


\begin{proof}[Proof of the corollary.]
Using the triangle inequality, \eqref{Sloc}, and the Cauchy-Schwarz inequality, we can write that
\begin{multline*} \|e^{-it|D|^\alpha}u\|_{L^4L^\infty} = \left\| \sum_N e^{-it|D|^\alpha}\Delta_Nu\right\|_{L^4L^\infty}\leq \sum_N\|e^{-it|D|^\alpha}\Delta_Nu\|_{L^4L^\infty} \leq C\sum_N N^{\frac{1}{2}-\frac{\alpha}{4}}\|\Delta_Nu\|_{L^2}\\
\leq C \left( \sum_N N^{2\gamma}\|\Delta_Nu\|^2_{L^2} \right)^{\frac{1}{2}}\left( \sum_N N^{-2\left(\gamma - (\frac{1}{2}-\frac{\alpha}{4})\right)} \right)^\frac{1}{2} \leq C_{\alpha ,\gamma}\|u\|_{H^\gamma},
\end{multline*}
where the last bound comes from \eqref{LPsob}.
\end{proof}

\begin{proof}[Proof of lemma \ref{strich}.]
To proove the Strichartz inequality, we proceed in a quite usual manner : we begin by showing a dispersion estimate, and to conclude, we apply a $TT^*$-argument, combined with the Hardy-Littlewood-Sobolev inequality.

Let $N=2^j$, $j\geq 1$, be a dyadic integer\footnote{For $N=1$, \eqref{Sloc} holds trivially.}. For $t\in \mathbb{R}$, we have
\begin{align*}
e^{-it|D|^\alpha}\Delta_Nu(x)&=\sum_{k\in \mathbb{Z}} \widehat{\Delta_Nu}(k)e^{i(kx-|k|^\alpha t)} \\&= \sum_{k\in \mathbb{Z}} \int_\mathbb{T}\psi\left( \frac{|k|}{N}\right) u(y)e^{i(k(x-y)-|k|^\alpha t)}dy\\&=:(u\ast_x \kappa_N)(x,t),
\end{align*} 
where $\kappa_N$ stands for the following kernel :
\[\kappa_N(x,t):=\sum_{k\in\mathbb{Z}}\psi\left( \frac{|k|}{N}\right) e^{i(kx-|k|^\alpha t)}.\]

Our first step will be to estimate $\|\kappa_N(\cdot,t)\|_{L^\infty(\mathbb{T})}$ for fixed $t\in (-1,1)$, $t\neq 0$. Applying the Poisson summation formula to the function $F_{x,t}(y):=\psi(|y|/N)e^{i(yx-|y|^\alpha t)}$, which is $\mathcal{C}^\infty$ and compactly supported, we have
\[\kappa_N(x,t)=\sum_{k\in \mathbb{Z}} F_{x,t}(k)=\sum_{n\in \mathbb{Z}} \widehat{F_{x,t}}(2\pi n) = \sum_{n\in \mathbb{Z}} \int_\mathbb{R} N\psi(|\xi |)e^{i\left[ N\xi(x-2\pi n) -N^\alpha t|\xi |^\alpha \right]}d\xi ,\]
and to study $\|\kappa_N\|_{L^\infty}$, we naturally restrict ourselves to $x\in (-\pi ,\pi]$.

The integrals above will be estimated by a stationnary phase result called the Van der Corput lemma (see \cite{Stein}) :

\begin{lemme}[Van der Corput] \label{VdC}
Let $\varphi$, $\Psi:\mathbb{R}\to \mathbb{C}$ be two smooth functions, with $\Psi$ compactly supported on $\mathbb{R}$. Suppose in addition that there exist $A>0$ such that $|\varphi ''|\geq A$ on $\supp (\Psi )$. Then
\[ \left| \int_\mathbb{R} e^{i\varphi(x)}\Psi(x)dx \right| \leq \frac{C}{\sqrt{A}}\int_\mathbb{R}|\Psi '(x)|dx,\]
where $C>0$ is an absolute constant.
\end{lemme}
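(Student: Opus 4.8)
The plan is to reduce the estimate to a uniform bound on the oscillatory integral \emph{without} amplitude, and then to prove that bound by isolating the stationary point of the phase.

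First I would dispose of the amplitude $\Psi$ by an integration by parts. Fix an interval $[a,b]$ containing $\supp(\Psi)$ and set $F(x):=\int_a^x e^{i\varphi(t)}\,dt$, so that $F(a)=0$ and $F'=e^{i\varphi}$. Since $\Psi$ is compactly supported, the boundary terms vanish and
\[ \int_{\mathbb{R}} e^{i\varphi}\Psi\,dx = \int_a^b F'\Psi\,dx = -\int_a^b F\,\Psi'\,dx, \]
whence $\bigl|\int e^{i\varphi}\Psi\bigr| \leq \bigl(\sup_x|F(x)|\bigr)\int|\Psi'|$. Thus everything reduces to the \emph{core estimate}
\[ \sup_{[\alpha,\beta]\subseteq[a,b]} \Bigl| \int_\alpha^\beta e^{i\varphi(t)}\,dt \Bigr| \leq \frac{C}{\sqrt{A}}, \]
the supremum being needed because each $F(x)$ is exactly such a partial integral.

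To prove the core estimate I would first record the elementary first-derivative bound: if $\varphi'$ is monotone and $|\varphi'|\geq\lambda$ on a subinterval, then integrating by parts against $\frac{1}{i\varphi'}\frac{d}{dt}e^{i\varphi}$ and using that $1/\varphi'$ is monotone gives $\bigl|\int e^{i\varphi}\bigr|\leq C_1/\lambda$. Then, calling $c$ the point of $[a,b]$ where $|\varphi'|$ is smallest, the hypothesis $|\varphi''|\geq A$ yields the key lower bound $|\varphi'(y)|\geq A|y-c|$ for every $y$. I would split $[\alpha,\beta]$ into the part within distance $\delta$ of $c$, estimated trivially by its length $\leq 2\delta$, and the remaining part(s), where $|\varphi'|\geq A\delta$ and the first-derivative bound contributes $\leq 2C_1/(A\delta)$. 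Optimizing in $\delta\sim A^{-1/2}$ balances the two contributions and produces the desired $C/\sqrt{A}$.

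The main point to handle carefully is the monotonicity of $\varphi'$ and the sign of $\varphi''$. Since $\varphi''$ is continuous with $|\varphi''|\geq A>0$ on $\supp(\Psi)$, it keeps a constant sign on each connected component of the support, so $\varphi'$ is strictly monotone there and $|\varphi'|$ admits a single minimizer $c$; this is precisely what makes both the inequality $|\varphi'(y)|\geq A|y-c|$ and the first-derivative lemma applicable. If $\supp(\Psi)$ is disconnected I would run the whole argument on each component separately: both $\int e^{i\varphi}\Psi$ and $\int|\Psi'|$ are additive over the components, and $\Psi$ vanishes at each component's endpoints, so summing the per-component bounds reproduces the stated inequality with no loss in the constant. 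This bookkeeping is really the only delicate point; the analytic heart is the single optimization $\delta\sim A^{-1/2}$ that trades the trivial estimate near the stationary point against the non-stationary-phase estimate away from it.
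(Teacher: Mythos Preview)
Your proof is the standard one (essentially the argument in Stein's \emph{Harmonic Analysis}, which is exactly the reference the paper cites), and it is correct: the reduction via $F(x)=\int_a^x e^{i\varphi}$ to a uniform bound on the amplitude-free integral, followed by the first-derivative lemma and the splitting/optimization around the minimizer of $|\varphi'|$, is precisely how Van der Corput's second-derivative estimate is established. Note, however, that the paper does \emph{not} provide its own proof of this lemma: it simply quotes the statement from \cite{Stein} and uses it as a black box inside the proof of the Strichartz estimate, so there is no ``paper's proof'' to compare against. One small remark: the statement as written allows $\varphi:\mathbb{R}\to\mathbb{C}$, but your argument (and the lemma itself) tacitly requires $\varphi$ to be real-valued so that $|e^{i\varphi}|=1$; this is of course the intended setting and is how the paper applies it.
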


In our case, the phase reads $ N\xi(x-2\pi n) -N^\alpha t |\xi|^\alpha$, and we denote it by $\phi_n(\xi)$. Compute $\phi_n'(\xi)=N(x-2\pi n)- \alpha N^\alpha t \xi|\xi|^{\alpha -2}$. In particular, $\phi_n'(\xi)=0$ if and only if
\begin{equation} \label{phi}\sgn (\xi)\left( N|\xi|\right) ^{1-\alpha}= \frac{\alpha t}{x-2\pi n}.
\end{equation}
Because of $\psi$ cutting all frequences below $\frac{1}{2}$, and because of the condition $1-\alpha >0$, we have $(N|\xi|)^{1-\alpha}\geq 1$ on $\supp (\psi)$, whereas $|t\alpha |\leq 1$. Furthermore, when $n\neq 0$, $|x-2\pi n|^{-1} \leq \pi ^{-1}$ : in that case, \eqref{phi} cannot hold.

So suppose first $n\neq 0$. Then $\|\phi_n'\|_{L^\infty(\supp (\psi))}\geq N|x-2\pi n|-N^\alpha 2^{1-\alpha}$, and integrating by parts, it is easy to estimate the integral
\[ I_n:=\int_\mathbb{R} N\psi(|\xi |)e^{i\phi_n(\xi )}d\xi =\int_{\mathbb{R}} N\frac{d}{d\xi}\left( \frac{1}{i\phi_n'(\xi)}\frac{d}{d\xi} \left(\frac{\psi(|\xi |)}{i\phi_n'(\xi )}\right) \right) e^{i\phi_n(\xi )}d\xi. \]
Thus, we have $|I_n|\leq CN\|\phi_n'\|_{L^\infty}^{-2}\|\psi\|_{H^2}$, where $C$ is proportionnal to the size of $\supp (\psi)$. Finally, we sum on $n$ :
\begin{equation} \label{disp1}\left| \sum_{n\neq 0} I_n \right| \leq \frac{C}{N}\sum_{n\neq 0}\left( |x-2\pi n|-\left( \frac{2}{N} \right) ^{1-\alpha} \right) ^{-2}\leq \frac{C}{N}\sum_{n\neq 0} (2\pi |n|-\pi-1)^{-2}\leq \frac{\tilde{C}}{N},
\end{equation}
with $\tilde{C}$ just depending on $\psi$.

The only difficult part, then, is $I_0$, because \eqref{phi} could be satisfied. At this point, we apply lemma \ref{VdC}, and calculate $\phi_0''(\xi )=\alpha (1-\alpha)N^\alpha t |\xi |^{\alpha -2}$. It is clear that $|\phi_0''(\xi )|\geq \alpha (1-\alpha ) N^\alpha |t| 2^{2-\alpha}$ on $\supp (\psi)$, so we have
\begin{equation}\label{disp2} |I_0| \leq C\frac{N^{1-\frac{\alpha}{2}}}{\sqrt{|t|}},
\end{equation}
where $C>0$ is a constant depending on $\alpha$ and $\psi$.

So far, \eqref{disp1} and \eqref{disp2} show that there exist a constant $C>0$, depending only on $\alpha$ and $\psi$, such that $\| \kappa_N(\cdot, t) \|_{L^\infty} \leq C|t|^{-1/2}N^{1-\alpha/2}$ for all $t\in (-1,1)$, $t\neq 0$. In particular, for fixed $t$, considering $S(t)\Delta_N$ as an operator mapping $L^1(\mathbb{T})$ to $L^\infty (\mathbb{T})$, we have
\begin{equation}\label{disp3} \|S(t)\Delta_N\|_{L^1\to L^\infty} \leq C\frac{N^{1-\frac{\alpha}{2}}}{\sqrt{|t|}}.
\end{equation}

\vspace{1em}
Now comes the $TT^*$-argument. Define a linear operator $T: u\mapsto S(t)\Delta_Nu$. We want to prove that $T$ maps $L^2(\mathbb{T})$ into $L^4((0,1)_t,L^\infty(\mathbb{T}))$, as well as to bound its norm. To this end, we rather study the operator $TT^*$, where $T^*: L^{4/3}((0,1)_t,L^1(\mathbb{T}))\to L^2(\mathbb{T})$ is (a restriction of) the adjoint of $T$. We can find $T^*$ explicitely. Let $g\in L^{4/3}L^1$ : for $u\in L^2(\mathbb{T})$,
\[ \iint_{(0,1)\times\mathbb{T}}[S(t)\Delta_Nu(x)]\overline{g(t,x)}dxdt=\left( u,\int_{0}^1 S(-s)\Delta_Ng(s,x)\right)_{L^2(\mathbb{T})}.\]
Thus, $TT^*(g)(t,x)=\int_{0}^1 \Delta_NS(t-s)\Delta_Ng(s,x)ds$, and by \eqref{disp3}, for all $t\in (0,1)$,
\[ \|TT^*(g)(t,\cdot)\|_{L^\infty(\mathbb{T})} \leq C \int_0^1 \frac{N^{1-\frac{\alpha}{2}}}{\sqrt{|t-s|}}\|g(s,\cdot )\|_{L^1(\mathbb{T})}ds.\]
In the integral of the left hand side, we recognize a convolution product between $t\mapsto \|g(t,\cdot )\|_{L^1(\mathbb{T})}$ and the function $\omega : t\mapsto |t|^{-1/2}$. The Hardy-Littlewood-Sobolev inequalities guarantee that the convolution with $\omega$ maps $L^{4/3}((0,1)_t)$ to $L^4((0,1)_t)$. In other terms, $\| TT^*(g)\|_{L^4L^\infty}\leq CN^{1-\alpha /2}\|g\|_{L^{4/3}L^1}$, which implies that the operator norm of $T$ is bounded :
\[ \|T\|_{L^2\to L^4((0,1)_t,L^\infty(\mathbb{T}))} \leq CN^{\frac{1}{2}-\frac{\alpha}{4}}.\]
This finishes the proof of lemma \ref{strich}.
\end{proof}

\begin{rem}
Notice that the results of lemma \ref{Sloc} and corollary \ref{Sglob} remain true, with the same constants, when replacing $(0,1)_t$ by any time interval of length $1$. This follows from the fact that $S(t)$ is an isometry in any $H^s(\mathbb{T})$, $s\geq 0$.
\end{rem}

\subsection{Bourgain spaces and embedding results}
The Strichartz estimate of corollary \ref{Sglob} will enable us to prove the local well-posedness of equation \eqref{mmt} in a certain Hilbert space, usually called a Bourgain space, which we are now going to define.

\begin{déf}
Let $u : \mathbb{R}\times\mathbb{T} \to\mathbb{C}$, and $s$, $b \in \mathbb{R}$.
\begin{itemize}
\item We say that $u \in H^{s,b}$ if for all $t\in \mathbb{R}$, $u(t)\in H^s(\mathbb{T})$, and if in addition, the function $t\mapsto \|u(t)\|_{H^s(\mathbb{T})}$ belongs to $H^b(\mathbb{R})$. Then, the norm $\|u\|_{H^{s,b}}$ is just the $H^b$-norm of $t\mapsto \|u(t)\|_{H^s(\mathbb{T})}$.
\item We say that $u \in X^{s,b}_\alpha$ if the function $v:(t,x)\mapsto S(-t)u(t,x)$ belongs to $H^{s,b}$. Then, we define $\|u\|_{X^{s,b}_\alpha}:=\|v\|_{H^{s,b}}=\|S(-t)u(t,x)\|_{H^{s,b}}$.
\end{itemize}
\end{déf}

The space $X_\alpha^{s,b}$ is called a \emph{Bourgain space}. Explicitely,
\begin{equation}\label{Bnorm}\|u\|_{X^{s,b}_\alpha}^2 =\frac{1}{2\pi} \sum_{k\in \mathbb{Z}}\int_\mathbb{R}(1+|k|^2)^s(1+|\tau +|k|^\alpha|^2)^b|\mathcal{F}u(\tau, k)|^2d\tau ,
\end{equation}
where $\mathcal{F}u(\cdot ,k)$ stands for the Fourier transform of $\widehat{u(\cdot ,k)}$ with respect to time, \textit{i.e.} the Fourier transform in both time- and space-variables.

Bourgain spaces are very convenient for several reasons. Playing on the two exponents $s$ and $b$, we begin by showing two embedding results :

\begin{lemme}\label{l2}
For any $b>\frac{1}{4}$, we have $\|u\|_{L^4(\mathbb{R}_t,L^2(\mathbb{T}))} \lesssim \|u\|_{X^{0,b}_\alpha}$.
\end{lemme}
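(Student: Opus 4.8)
The plan is to reduce the statement to a one-dimensional Sobolev embedding in the time variable, carried out mode by mode in space. The key observation is that, in contrast with the estimates that genuinely require dispersion (and hence lemma \ref{strich}), this particular embedding holds for \emph{any} dispersion relation: since the target norm only involves $L^2$ in the space variable, the phase $|k|^\alpha$ enters merely as a harmless modulation and never needs to be exploited.

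First I would pass to the profile $v(t,x):=S(-t)u(t,x)$. Because $S(t)=e^{-it|D|^\alpha}$ is an isometry of $L^2(\mathbb{T})$ at each fixed time, one has $\|u(t)\|_{L^2}=\|v(t)\|_{L^2}$ for every $t$, whence $\|u\|_{L^4(\mathbb{R}_t,L^2)}=\|v\|_{L^4(\mathbb{R}_t,L^2)}$; on the other hand, by the very definition of the Bourgain norm — or, concretely, by the change of variables $\tau\mapsto\tau+|k|^\alpha$ in \eqref{Bnorm} — one has $\|u\|_{X^{0,b}_\alpha}=\|v\|_{H^{0,b}}$, with $\|v\|_{H^{0,b}}^2=\frac{1}{2\pi}\sum_{k}\int_\mathbb{R}(1+|\tau|^2)^b|\mathcal{F}v(\tau,k)|^2\,d\tau$. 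It therefore suffices to prove $\|v\|_{L^4(\mathbb{R}_t,L^2)}\lesssim\|v\|_{H^{0,b}}$.

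Next, using Plancherel in the space variable and writing $\widehat v(\cdot,k)$ for the $k$-th Fourier coefficient, the left-hand side equals, up to a constant, $\|\widehat v\|_{L^4(\mathbb{R}_t,\ell^2_k)}$. Since $4\geq 2$, Minkowski's integral inequality gives $\|\widehat v\|_{L^4_t\ell^2_k}\leq\|\widehat v\|_{\ell^2_k L^4_t}$, which reduces the matter to a bound, uniform in $k$, of $\|\widehat v(\cdot,k)\|_{L^4(\mathbb{R}_t)}$ by $\|\widehat v(\cdot,k)\|_{H^b(\mathbb{R}_t)}$. This final step is precisely the one-dimensional Sobolev embedding $H^b(\mathbb{R})\hookrightarrow L^4(\mathbb{R})$, valid exactly because $b>\frac14$ (the exponent $4$ sits at the threshold $b=\frac14$ in dimension one). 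Squaring, summing over $k$, and invoking Plancherel in time then reconstitute $\sum_k\int_\mathbb{R}(1+|\tau|^2)^b|\mathcal{F}v(\tau,k)|^2\,d\tau=\|v\|_{H^{0,b}}^2=\|u\|_{X^{0,b}_\alpha}^2$, which closes the argument.

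I do not anticipate any serious difficulty; the one point deserving care is the bookkeeping of weights in the reduction step. It is the modulation $e^{i|k|^\alpha t}$ hidden in the passage from $u$ to $v$ — invisible to the $L^4_t$ norm but decisive for the $H^b_t$ norm — that converts the naive weight $(1+|\tau|^2)^b$ produced by the scalar embedding into the genuine Bourgain weight $(1+|\tau+|k|^\alpha|^2)^b$. Once this substitution is correctly accounted for, the estimate rests on nothing more than Minkowski's inequality and a single scalar Sobolev embedding.
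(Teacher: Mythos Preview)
Your argument is correct. It differs from the paper's in organisation: the paper first proves the endpoint $X^{0,2b}_\alpha\hookrightarrow L^\infty(\mathbb{R}_t,L^2(\mathbb{T}))$ for $2b>\tfrac12$ via Cauchy--Schwarz in the time-frequency variable, pairs it with the trivial identity $X^{0,0}_\alpha=L^2(\mathbb{R}_t,L^2(\mathbb{T}))$, and then interpolates to reach $L^4_tL^2_x$. You instead strip off the dispersion by passing to the profile $v=S(-t)u$, swap $L^4_t\ell^2_k$ for $\ell^2_kL^4_t$ by Minkowski, and invoke the scalar embedding $H^b(\mathbb{R})\hookrightarrow L^4(\mathbb{R})$ mode by mode. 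The two routes are equivalent in strength; yours has the advantage of making immediately transparent that the dispersion relation $|k|^\alpha$ plays no role (it is absorbed into the modulation), whereas in the paper this is implicit in the translation invariance of the weight $\int(1+|\tau+|k|^\alpha|^2)^{-2b}d\tau$. The paper's route, on the other hand, yields the $L^\infty_tL^2_x$ bound as a byproduct.
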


\begin{proof}[Proof]
Assuming $u\in X^{0,2b}_\alpha$, write $\mathcal{F}u(\tau ,k)=\int_\mathbb{R}\widehat{u(t,k)}e^{-it\tau}dt$, so by the inverse Fourier transform and the Cauchy-Schwarz inequality (observing that $4b>1$),
\begin{multline*}
|\widehat{u(t,k)}|^2=\left| \frac{1}{2\pi}\int_\mathbb{R}\mathcal{F}u(\tau ,k)e^{it\tau}d\tau \right|^2 \\ \leq C \left( \int_\mathbb{R} |\mathcal{F}u(\tau ,k)|^2(1+|\tau +|k|^\alpha|^2)^{2b}d\tau \right) \left( \int_\mathbb{R} \frac{d\tau}{(1+|\tau +|k|^\alpha|^2)^{2b}} \right).
\end{multline*}
Summing over $k\in \mathbb{Z}$, we find $\|u(t)\|_{L^2(\mathbb{T})}^2 \leq C_b\|u\|_{X^{0,2b}_\alpha}^2$, or equivalently $\|u\|_{L^\infty(\mathbb{R}_t,L^2(\mathbb{T})) }\leq C\|u\|_{X^{0,2b}_\alpha}$. But the equality $\|u\|_{L^2(\mathbb{R}_t,L^2(\mathbb{T}))} = \|u\|_{X^{0,0}_\alpha}$ also follows from \eqref{Bnorm} and the Parseval formula. Interpolating between these two statements gives the result.
\end{proof}

The following lemma is a consequence of the Strichartz inequality.
\begin{lemme}\label{linfty}
For any $b>\frac{1}{2}$ and $\gamma >\frac{1}{2}-\frac{\alpha}{4}$, we have $\|u\|_{L^4(\mathbb{R}_t,L^\infty(\mathbb{T}))}\lesssim \|u\|_{X^{\gamma ,b}_\alpha}$.
\end{lemme}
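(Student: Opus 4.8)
The plan is to run a transference principle, deducing the Bourgain estimate from the homogeneous Strichartz inequality of Corollary \ref{Sglob}. Since every ingredient is built on the free flow $S(t)$, I would first record how an element of the Bourgain space is represented through $S(t)$. Writing $v(t,x):=S(-t)u(t,x)$, so that $u(t)=S(t)v(t)$, and denoting by $\widehat v(\tau)\in H^\gamma(\mathbb T)$ the partial Fourier transform of $v$ in the time variable, the inverse Fourier formula gives
\[u(t,x)=\frac{1}{2\pi}\int_{\mathbb R}e^{it\tau}\bigl(S(t)\widehat v(\tau)\bigr)(x)\,d\tau,\]
whereas, undoing the free flow in \eqref{Bnorm}, one has the identity $\|u\|_{X^{\gamma,b}_\alpha}^2=\int_{\mathbb R}(1+\tau^2)^{b}\,\|\widehat v(\tau)\|_{H^\gamma}^2\,d\tau$. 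The assumption $b>\tfrac12$ will enter precisely through the integrability of $(1+\tau^2)^{-b}$.

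The heart of the matter is the corresponding estimate \emph{local} in time. Fix an interval $J$ of length one. Applying Minkowski's integral inequality to the representation above, in the norm $L^4(J_t,L^\infty_x)$, and using that $|e^{it\tau}|=1$ leaves the $L^\infty_x$-norm unchanged, I obtain
\[\|u\|_{L^4(J,L^\infty)}\le\frac{1}{2\pi}\int_{\mathbb R}\bigl\|S(t)\widehat v(\tau)\bigr\|_{L^4(J,L^\infty)}\,d\tau.\]
For each fixed $\tau$, the function $t\mapsto S(t)\widehat v(\tau)$ is a free solution with data in $H^\gamma$, so Corollary \ref{Sglob} — valid on any interval of length one with a uniform constant, as noted in the remark following it — bounds the inner norm by $C_{\alpha,\gamma}\|\widehat v(\tau)\|_{H^\gamma}$. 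A Cauchy--Schwarz inequality in $\tau$, splitting $1=(1+\tau^2)^{-b/2}(1+\tau^2)^{b/2}$, then yields $\|u\|_{L^4(J,L^\infty)}\lesssim_b\|u\|_{X^{\gamma,b}_\alpha}$, the remaining $\tau$-integral of $(1+\tau^2)^{-b}$ being finite exactly because $2b>1$.

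It remains to globalize in time. This is the genuinely delicate step, and I would flag it as the main obstacle: on $\mathbb T$ there is \emph{no} global-in-time dispersion, so the penultimate display truly fails on $\mathbb R$ (the norm $\|S(t)\widehat v(\tau)\|_{L^4(\mathbb R_t,L^\infty)}$ is infinite) and one cannot simply enlarge $J$ to the whole line. Instead I would partition $\mathbb R=\bigsqcup_m I_m$ into unit intervals, choose $\chi\in C_c^\infty(\mathbb R)$ with $\chi\equiv 1$ on $[0,1]$, and set $\chi_m:=\chi(\cdot-m)$. Since $\chi_m\equiv 1$ on $I_m$ and $S(-t)(\chi_m u)=\chi_m\,S(-t)u$, applying the local estimate to $\chi_m u$ on $I_m$ gives $\|u\|_{L^4(I_m,L^\infty)}\lesssim\|\chi_m u\|_{X^{\gamma,b}_\alpha}$ with a constant independent of $m$ by time-translation invariance.

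Finally, from $\|u\|_{L^4(\mathbb R,L^\infty)}^4=\sum_m\|u\|_{L^4(I_m,L^\infty)}^4$ and the embedding $\ell^4\hookrightarrow\ell^2$, the proof reduces to the almost-orthogonality bound $\sum_m\|\chi_m u\|_{X^{\gamma,b}_\alpha}^2\lesssim\|u\|_{X^{\gamma,b}_\alpha}^2$. By \eqref{Bnorm} this decouples over the spatial frequencies, so it reduces to the scalar fact that $\sum_m\|\chi_m h\|_{H^b(\mathbb R)}^2\lesssim\|h\|_{H^b(\mathbb R)}^2$, which is immediate for $b\in\{0,1\}$ from the bounded overlap and uniformly bounded derivatives of the $\chi_m$, and follows by interpolation for $0\le b\le 1$. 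Since $\|u\|_{X^{\gamma,b_0}_\alpha}\le\|u\|_{X^{\gamma,b}_\alpha}$ whenever $b\ge b_0$ (the weight $(1+\cdot^2)^{b}$ being pointwise increasing in $b$), fixing any $b_0\in(\tfrac12,1]$ suffices to cover every $b>\tfrac12$, which completes the argument.
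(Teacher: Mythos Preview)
Your proof is correct and follows essentially the same approach as the paper: a local-in-time transference argument (Fourier in $t$, Minkowski, Corollary~\ref{Sglob}, then Cauchy--Schwarz using $b>\tfrac12$), followed by globalization via smooth time cutoffs and the almost-orthogonality bound $\sum_m\|\chi_m h\|_{H^b}^2\lesssim\|h\|_{H^b}^2$, which the paper invokes as a ``classical lemma'' and you obtain by interpolation between $b=0$ and $b=1$. The only cosmetic differences are the precise choice of cutoffs (the paper uses a partition of unity $\sum_n\vartheta(t-n/2)=1$ and the elementary inequality $abcd\le\tfrac14(a^4+b^4+c^4+d^4)$ to handle the overlap) and your explicit reduction of the case $b>1$ to $b\in(\tfrac12,1]$ via monotonicity.
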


\begin{proof}[Proof]
Let $u\in X^{\gamma ,b}_\alpha$, and $v:=S(-t)u$. Suppose at first that $t \mapsto u(t,\cdot )$ is supported on an interval $I_t$ of length $1$. Thus it is possible to apply corollary \ref{Sglob} directly : indeed,
\begin{multline*}
\|u\|_{L^4(\mathbb{R}_t,L^\infty)}=\|S(t)v\|_{L^4(I_t,L^\infty)}=\left\| S(t) \int_\mathbb{R}\hat{v}(\tau )e^{it\tau }d\tau \right\|_{L^4(I_t,L^\infty)} \leq \int_\mathbb{R} \|S(t)\hat{v}(\tau)\|_{L^4(I_t,L^\infty)}d\tau \\
\leq C\int_\mathbb{R}\|\hat{v}(\tau )\|_{H^\gamma}d\tau \leq C \left( \int_\mathbb{R} \|\hat{v}(\tau)\|_{H^\gamma}^2(1+|\tau |^2)^bd\tau \right)^{\frac{1}{2}} \left( \int_\mathbb{R} \frac{d\tau}{(1+|\tau |^2)^b} \right)^\frac{1}{2} \leq C_b\|v\|_{H^{\gamma,b}}.
\end{multline*}
Since $\|v\|_{H^{\gamma ,b}}=\|u\|_{X^{\gamma ,b}_\alpha}$, this finishes the first part of the proof.

Now, we remove the special assumption on $u$. By simple construction, it is possible to find a function $\vartheta \in \mathcal{C}^\infty_0((0,1))$, such that $0\leq \vartheta\leq 1$ on $\mathbb{R}$, and $\sum_{n\in \mathbb{Z}} \vartheta (t-n/2) = 1$, for all $t\in\mathbb{R}$. We have
\begin{align*} \|u&\|^4_{L^4(\mathbb{R}_t, L^\infty (\mathbb{T}))}= \int_{\mathbb{R}} dt\left\| \sum_{n\in\mathbb{Z}} u(t,\cdot)\vartheta \left( t-\frac{n}{2}\right) \right\|_{L^\infty (\mathbb{T})}^4 \leq \int_\mathbb{R}dt\left| \sum_{n\in\mathbb{Z}}\vartheta \left( t-\frac{n}{2}\right) \|u(t,\cdot)\|_{L^\infty(\mathbb{T})}\right| ^4 \\
&= \int_\mathbb{R}dt \sum_{\substack{n_1\in\mathbb{Z}\\n_2,n_3,n_4\in\lbrace n_1-1,n_1,n_1+1\rbrace}}\vartheta \left( t-\frac{n_1}{2}\right) \vartheta \left( t-\frac{n_2}{2}\right) \vartheta \left( t-\frac{n_3}{2}\right) \vartheta \left( t-\frac{n_4}{2}\right) \|u(t,\cdot)\|^4_{L^\infty(\mathbb{T})} \\
&\leq C\sum_{n_1\in\mathbb{Z}}\left\| \vartheta \left( t-\frac{n_1}{2} \right)u(t,\cdot)\right\|_{L^4(\mathbb{R}_t,L^\infty(\mathbb{T}))}^4 ,
\end{align*}
thanks to the elementary inequality : $abcd\leq \frac{1}{4}(a^4+b^4+c^4+d^4)$. To each term of the sum, we apply the first part of the proof, and we find, using the embedding $\ell^4(\mathbb{N})\hookrightarrow \ell^2(\mathbb{N})$ :
\begin{align*}
\|u\|_{L^4(\mathbb{R}_t, L^\infty (\mathbb{T}))} &\lesssim \left( \sum_{n\in\mathbb{Z}} \left\| \vartheta \left( t-\frac{n}{2} \right) S(-t)u(t,\cdot )\right\|_{H^{\gamma, b}}^4 \right) ^\frac{1}{4} \\
&\lesssim \left( \sum_{n\in\mathbb{Z}} \left\| \vartheta \left( t-\frac{n}{2} \right) S(-t)u(t,\cdot )\right\|_{H^{\gamma, b}}^2 \right) ^\frac{1}{2} \\
&\lesssim \|S(-t)u(t,\cdot )\|_{H^{\gamma ,b}}.
\end{align*}
The very last bound comes from the following classical lemma :
\begin{lemme}
For any $ b\in [0,1] $, any function $w\in H^b(\mathbb{R})$, any smooth $\vartheta $ as above, the following norms are equivalent :
\[ \|w\|_{H^b(\mathbb{R})}\simeq \left( \sum_{n\in \mathbb{Z}} \|\vartheta (\cdot -n/2)w(\cdot )\|_{H^b(\mathbb{R})}^2 \right) ^\frac{1}{2}.\]
\end{lemme}
\end{proof}

\subsection{The nonlinear estimate}
To solve \eqref{mmt} locally in time, we need to introduce a restriction space. For $T>0$, let $X^{s,b}_\alpha(T)$ be the set of all functions $u$ defined on $[-T,T]$ such that there exist a function $\tilde{u}\in X^{s,b}_\alpha$ with $\tilde{u}|_{[-T,T]}\equiv u$. Endowed with the norm
\[ \|u\|_{X^{s,b}_\alpha(T)} := \inf \left\lbrace \|\tilde{u}\|_{X^{s,b}_\alpha} \middle| \:\tilde{u}\in X^{s,b}_\alpha , \: \tilde{u}\equiv u \text{ on }[-T,T] \right\rbrace ,\]
$X^{s,b}_\alpha(T)$ is a Banach space, so we can apply Picard's fixed-point theorem in $X^{s,b}_\alpha(T)$. From here on, we strictly follow the scheme of the proof of \cite[Theorem 3]{BGT}.

Fix a function $\varphi \in C^\infty_0(\mathbb{R})$, such that $\varphi(t)=1$ when $|t|\leq 1$. Then for $T\in (0,1]$, a solution of \eqref{mmt} on the interval $[-T,T]$ is a fixed point of the application
\begin{equation}\label{fix}
\mathcal{K}: u \longmapsto \varphi(t)S(t)u_0-i\varphi \left( \textstyle\frac{t}{T}\displaystyle\right) \int_0^t S(t-s)\left[ |u(s)|^2u(s)\right] ds.
\end{equation}

Thanks to the definition of Bourgain spaces, estimating the first part of $\mathcal{K}$ is elementary : $\|\varphi(t)S(t)u_0\|_{X^{s,b}_\alpha}=\|\varphi\|_{H^b(\mathbb{R}_t)}\|u_0\|_{H^s(\mathbb{T})}$. All the difficulty lies in the nonlinear part, and we now turn to our central proposition.

\begin{prop}\label{nonl}Let $\gamma > \frac{1}{2}-\frac{\alpha}{4}$, $b>\frac{1}{2}$, $b'>\frac{1}{4}$ such that $b+b'<1$. Then
\[\|u_1\overline{u_2}u_3\|_{X^{\gamma,-b'}_\alpha} \leq C\|u_1\|_{X^{\gamma,b}_\alpha}\|u_2\|_{X^{\gamma,b}_\alpha}\|u_3\|_{X^{\gamma,b}_\alpha}.\]
\end{prop}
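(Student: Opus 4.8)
The plan is to reduce the trilinear estimate to the two embedding lemmas (Lemma \ref{l2} and Lemma \ref{linfty}) by a duality argument, so that the only dispersive input is the Strichartz estimate already encoded there. First I would peel off the spatial weight and dualize the negative time-exponent. From \eqref{Bnorm} one reads off $\|w\|_{X^{\gamma,-b'}_\alpha}=\|\langle D\rangle^\gamma w\|_{X^{0,-b'}_\alpha}$, where $\langle D\rangle=(1+|D|^2)^{1/2}$ acts in the space variable only. Since $X^{0,-b'}_\alpha$ is the dual of $X^{0,b'}_\alpha$ for the $L^2(\mathbb{R}\times\mathbb{T})$ pairing, dualizing the continuous embedding $X^{0,b'}_\alpha\hookrightarrow L^4(\mathbb{R}_t,L^2(\mathbb{T}))$ furnished by Lemma \ref{l2} (valid since $b'>\frac14$) yields $L^{4/3}(\mathbb{R}_t,L^2(\mathbb{T}))\hookrightarrow X^{0,-b'}_\alpha$. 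Applying $\langle D\rangle^\gamma$ then gives the reduction
\[\|u_1\overline{u_2}u_3\|_{X^{\gamma,-b'}_\alpha}\lesssim \|u_1\overline{u_2}u_3\|_{L^{4/3}(\mathbb{R}_t,\,H^\gamma(\mathbb{T}))},\]
so that it remains to control a purely space-time norm of the product.

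Next I would distribute the $\gamma$ spatial derivatives by a fractional Leibniz (Kato--Ponce) inequality: for $\gamma\geq 0$ one has $\|fg\|_{H^\gamma}\lesssim \|f\|_{H^\gamma}\|g\|_{L^\infty}+\|f\|_{L^\infty}\|g\|_{H^\gamma}$, which iterated over three factors gives, at each fixed time,
\[\|u_1\overline{u_2}u_3\|_{H^\gamma}\lesssim \|u_1\|_{H^\gamma}\|u_2\|_{L^\infty}\|u_3\|_{L^\infty}+(\text{two symmetric terms}).\]
In every term exactly one factor carries the $H^\gamma$ norm and the other two carry $L^\infty$ norms. Taking the $L^{4/3}$ norm in time and applying Hölder with the splitting $\frac34=\frac14+\frac14+\frac14$ reduces the task to bounding products of the form $\|u_i\|_{L^4_tH^\gamma_x}\,\|u_j\|_{L^4_tL^\infty_x}\,\|u_k\|_{L^4_tL^\infty_x}$.

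Finally I would invoke the embeddings termwise: Lemma \ref{l2} applied to $\langle D\rangle^\gamma u$ gives $\|u\|_{L^4_tH^\gamma_x}\lesssim\|u\|_{X^{\gamma,b}_\alpha}$ (using $b>\frac14$), while Lemma \ref{linfty} gives $\|u\|_{L^4_tL^\infty_x}\lesssim\|u\|_{X^{\gamma,b}_\alpha}$ (using $b>\frac12$ and $\gamma>\frac12-\frac\alpha4$). The complex conjugate on $u_2$ is harmless, since $|\overline{u_2}|=|u_2|$ makes both the $L^\infty$ and the $H^\gamma$ spatial norms conjugation-invariant, so $\overline{u_2}$ may be treated as $u_2$ throughout. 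Summing the three symmetric contributions produces $\|u_1\overline{u_2}u_3\|_{X^{\gamma,-b'}_\alpha}\lesssim\prod_{i=1}^3\|u_i\|_{X^{\gamma,b}_\alpha}$, as claimed.

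I do not expect to need the hypothesis $b+b'<1$ for this estimate itself; that condition is reserved for gaining a positive power of $T$ in the contraction estimate for the Duhamel term of \eqref{fix}. The only genuinely delicate point is the duality step: one must arrange that the factor of negative time-regularity is absorbed into the $L^{4/3}_tL^2_x$ bound coming from the \emph{dual} of Lemma \ref{l2}, so that the dispersive Strichartz input ($L^4_tL^\infty_x$ via Lemma \ref{linfty}) is applied only to the three factors of regularity $(\gamma,b)$ and never to a function of regularity $-b'$. Once this is set up correctly, the remainder is routine bookkeeping with the fractional Leibniz rule, Hölder in time, and the already-established embeddings.
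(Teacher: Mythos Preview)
Your proof is correct and takes a genuinely different, more streamlined route than the paper's. The paper dualizes to the quadrilinear form $\int u_1\overline{u_2}u_3\overline{u_0}$ and then performs a full dyadic decomposition of all four factors in both spatial frequency ($N_j$) and time-modulation ($L_j$); each piece $I(L,N)$ is estimated by H\"older with the splitting $L^4_tL^2_x\cdot L^4_tL^\infty_x\cdot L^4_tL^\infty_x\cdot L^4_tL^2_x$ (via Lemmas \ref{l2} and \ref{linfty}), and one then sums over all dyadic scales using the constraint $N_0\lesssim N_1$. You instead observe that dualizing Lemma \ref{l2} directly yields $L^{4/3}_tL^2_x\hookrightarrow X^{0,-b'}_\alpha$, which disposes of the negative time-regularity in one stroke; the Kato--Ponce product estimate then distributes the $H^\gamma$ weight, and a single H\"older in time finishes. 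This bypasses the dyadic bookkeeping entirely. The paper's decomposition is the more flexible template (useful when one needs to track which frequencies interact, or to prove bilinear refinements), but for the present statement---and for the tame variant $\|u_1\overline{u_2}u_3\|_{X^{s,-b'}_\alpha}\lesssim\sum_j\|u_j\|_{X^{s,b}_\alpha}\prod_{k\neq j}\|u_k\|_{X^{\gamma,b}_\alpha}$ needed later for persistence of regularity---your approach is at least as efficient, and the Leibniz step makes that tame structure transparent. Your remark that $b+b'<1$ is not used here matches the paper's own observation (see the Remark following the proof).
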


\begin{proof}[Proof]
We prove this proposition thanks to a duality argument. Since $(X^{\sigma, \beta}_\alpha)'=X^{-\sigma,-\beta}_\alpha$ for any $\sigma$, $\beta \in \mathbb{R}$, it is sufficent to show that, for a given $u_0 \in X^{-\gamma,b'}_\alpha$, we have
\begin{equation}\label{dual} \left| \int_{\mathbb{R}\times\mathbb{T}}u_1\overline{u_2}u_3\overline{u_0} \right| \leq \|u_0\|_{X^{-\gamma ,b'}_\alpha}\prod_{j=1}^3\|u_j\|_{X^{\gamma ,b}_\alpha}.
\end{equation}
In fact, we restrict the proof of \eqref{dual} to the case of smooth functions $u_j$, $u_0$, with compact support in time --- and then conclude by density of such functions. To start with, we introduce new functions $w_0$, $w_1$, $w_2$, $w_3$ by
\begin{align*}
\mathcal{F}w_j(\tau ,k)&:= (1+|k|^2)^\frac{\gamma}{2}(1+|\tau +|k|^\alpha|^2)^\frac{b}{2}\mathcal{F}u_j(\tau, k) ,\quad \text{for }j\in \lbrace 1,2,3 \rbrace ,\\
\mathcal{F}w_0(\tau ,k)&:= (1+|k|^2)^{-\frac{\gamma}{2}}(1+|\tau +|k|^\alpha|^2)^{\frac{b'}{2}}\mathcal{F}u_0(\tau, k),
\end{align*}
and we can replace the right hand side of \eqref{dual} by $\|w_0\|_{L^2(\mathbb{R}\times\mathbb{T})}\prod_{j=1}^3\|w_j\|_{L^2(\mathbb{R}\times\mathbb{T})}$.

To estalish \eqref{dual}, we perform a time- and space-localization. By $L_0$, $L_j$, $N_0$, $N_j$, we refer to dyadic integers, and $L:=(L_0,L_1,L_2,L_3)$ will concern time, whereas $N:=(N_0,N_1,N_2,N_3)$ will hint at space. Define, for $j\in \lbrace 1,2,3 \rbrace$,
\begin{align*}
u_j^{L_jN_j}(t,x)&:= \frac{1}{2\pi} \sum_{|n|\in [N_j,2N_j[}e^{inx}\int_{L_j\leq |\tau +|n|^\alpha|<2L_j}(1+|n|^2)^{-\frac{\gamma}{2}}(1+|\tau +|n|^\alpha|^2)^{-\frac{b}{2}}\mathcal{F}w_j(\tau, n)e^{it\tau}d\tau ,\\
u_0^{L_0N_0}(t,x)&:= \frac{1}{2\pi} \sum_{|n|\in [N_0,2N_0[}e^{inx}\int_{L_0\leq |\tau +|n|^\alpha|<2L_0}(1+|n|^2)^{\frac{\gamma}{2}}(1+|\tau +|n|^\alpha|^2)^{-\frac{b'}{2}}\mathcal{F}w_0(\tau, n)e^{it\tau}d\tau .
\end{align*}
A simple calculus shows that, for $j\in \lbrace 1,2,3 \rbrace$ first,
\[ \begin{aligned} \|u_j^{L_jN_j}\|_{X^{\sigma ,\beta}_\alpha}^2 &= C \sum_{|n|\simeq N_j}\int_{|\tau +|n|^\alpha |\simeq L_j}(1+|n|^2)^{\sigma -\gamma}(1+|\tau +|n|^\alpha|^2)^{\beta -b}|\mathcal{F}w_j(\tau, n)|^2d\tau \\
&\lesssim L_j^{2(\beta -b)}N_j^{2(\sigma-\gamma)}\underbrace{\sum_{|n|\simeq N_j}\int_{|\tau +|n|^\alpha |\simeq L_j}|\mathcal{F}w_j(\tau, n)|^2d\tau}_{=: \: c_j(L_j,N_j)^2},
\end{aligned}\]
where $c_j(L_j,N_j)$ satisfies $\sum_{L_j} \sum_{N_j} c_j(L_j,N_j)^2 \leq \|w_j\|_{L^2(\mathbb{R}\times\mathbb{T})}^2$. (Here, and in all the sequel, the summation over $L_j$ or $N_j$ means the summation over all dyadic integers.) We have a similar result for $u_0^{L_0N_0}$, so finally, for any $\sigma$, $\beta \in \mathbb{R}$,
\begin{align}
\|u_j^{L_jN_j}\|_{X^{\sigma ,\beta}_\alpha} &\lesssim L_j^{\beta -b}N_j^{\sigma-\gamma}c_j(L_j,N_j), \label{uj}\\
\|u_0^{L_0N_0}\|_{X^{\sigma ,\beta}_\alpha} &\lesssim L_0^{\beta -b'}N_0^{\sigma+\gamma}c_0(L_0,N_0). \label{u0}
\end{align}

Now we are going to estimate
\[ I(L,N):=\left| \int_{\mathbb{R}\times\mathbb{T}} u_1^{L_1N_1}\overline{u_2^{L_2N_2}}u_3^{L_3N_3}\overline{u_0^{L_0N_0}} \right| .\]
Notice that integrating on $\mathbb{T}$ implies that $I(L,N)=0$ unless $N_0\leq 2(N_1+N_2+N_3)$. From now on, we suppose that this condition is fulfilled. Moreover, the proof below does not take into account the precise role of the $u_j$'s, neither the conjugate bar, so we can assume that $N_1\geq N_2 \geq N_3$.

Using the Hölder inequalities, lemmas \ref{l2} and \ref{linfty}, and finally \eqref{uj} and \eqref{u0}, choosing any $\beta \in (\frac{1}{2},b)$, $\beta '\in (\frac{1}{4},b')$, $\gamma ' \in ( \frac{1}{2}-\frac{\alpha}{4},\gamma)$, we bound $I(L,N)$ :
\[ \begin{aligned} I(L,N) &\leq \|u_1^{L_1N_1}\|_{L^4(\mathbb{R}_t,L^2(\mathbb{T}))}\cdot \|u_2^{L_2N_2}\|_{L^4(\mathbb{R}_t,L^\infty(\mathbb{T}))}\cdot \|u_3^{L_3N_3}\|_{L^4(\mathbb{R}_t,L^\infty(\mathbb{T}))}\cdot \|u_0^{L_0N_0}\|_{L^4(\mathbb{R}_t,L^2(\mathbb{T}))} \\
&\leq \|u_1^{L_1N_1}\|_{X^{0,\beta '}_\alpha}\cdot \|u_2^{L_2N_2}\|_{X^{\gamma ' ,\beta}_\alpha}\cdot \|u_3^{L_3N_3}\|_{X^{\gamma' ,\beta}_\alpha}\cdot \|u_0^{L_0N_0}\|_{X^{0,\beta '}_\alpha} \\
&\leq \frac{(L_0L_1)^{\beta '}(L_2L_3)^\beta}{L_0^{b'}(L_1L_2L_3)^b}(N_2N_3)^{\gamma'}\frac{N_0^\gamma}{(N_1N_2N_3)^\gamma} \prod_{j=0}^3 c_j(L_j,N_j) \\
&\leq L_0^{-\varepsilon_0}L_1^{-\varepsilon_1}(L_2L_3)^{-\varepsilon '}(N_2N_3)^{-\eta '}\left( \frac{N_0}{N_1}\right) ^{\gamma} \prod_{j=0}^3 c_j(L_j,N_j),
\end{aligned}\]
where $\varepsilon_0$, $\varepsilon_1$, $\varepsilon '$, $\eta '$ are some positive constants. Consequently, we can sum on $L_2, N_2, L_3, N_3$, making use of the bound $c_j(L_j,N_j)\leq \|w_j\|_{L^2(\mathbb{R}\times\mathbb{T})}$ for $j\in \lbrace 2,3 \rbrace$. Then, by Cauchy-Schwarz,
\[ \begin{aligned} \sum_{L_0,L_1} \sum_{L_2, N_2, L_3, N_3}I(L,N)&\lesssim \left( \frac{N_0}{N_1}\right) ^{\gamma}\|w_2\|_{L^2}\|w_3\|_{L^2} \left( \sum_{L_0}c_0(L_0,N_0)L_0^{-\varepsilon_0} \right) \left(\sum_{L_1}c_1(L_1,N_1)L_1^{-\varepsilon_1} \right)  \\
&\lesssim \left( \frac{N_0}{N_1}\right) ^{\gamma}K_0(N_0)^\frac{1}{2}K_1(N_1)^\frac{1}{2}\|w_2\|_{L^2}\|w_3\|_{L^2},
\end{aligned}\]
introducing $K_j(N_j):=\sum_{L_j}c_j(L_j,N_j)^2$ for $j\in \lbrace 0,1 \rbrace$. It remains to sum on $N_0$, $N_1$, remembering that $N_0\leq 6N_1$, and using Cauchy-Schwarz again :
\[ \begin{aligned} \sum_{L,N} I(L,N) &\lesssim \sum_{\ell = -3}^{+\infty}\sum_{N_0} \left( \frac{N_0}{2^\ell N_0}\right) ^{\gamma}K_0(N_0)^\frac{1}{2}K_1(2^\ell N_0)^\frac{1}{2}\|w_2\|_{L^2}\|w_3\|_{L^2} \\
&\lesssim \sum_{\ell = -3}^{+\infty}2^{-\gamma\ell}\left( \sum_{N_0} K_0(N_0) \right)^\frac{1}{2}\left( \sum_{N_0}K_1(2^\ell N_0) \right)^\frac{1}{2}\|w_2\|_{L^2}\|w_3\|_{L^2} \\
&\lesssim \|w_0\|_{L^2}\|w_1\|_{L^2}\|w_2\|_{L^2}\|w_3\|_{L^2}.
\end{aligned}\]
Hence \eqref{dual} is proven, and so is proposition \ref{nonl}.
\end{proof}

\begin{rem}The condition $b+b'<1$ has not been used in the proof, except for the fact that $b'<b$ ; but it will be crucial in the next proposition.
\end{rem}

\subsection{Local and global well-posedness}
We are now ready to state our

\begin{prop}
Let $\gamma >\frac{1}{2}-\frac{\alpha}{4}$ and $\frac{1}{2}<b<1$. If $u_0\in H^\gamma(\mathbb{T})$, there exist $T_0>0$, depending only on $\|u_0\|_{H^\gamma}$, such that the problem \eqref{mmt} admits a unique solution $u\in X^{\gamma ,b}_\alpha(T)$ for all $T\leq T_0$. This solution satisfies $\|u\|_{X^{\gamma ,b}_\alpha(T)}\leq C\|u_0\|_{H^\gamma(\mathbb{T})}$, where $C>0$ is an absolute constant.

If in addition $u_0$ belongs to $H^s(\mathbb{T})$ for some $s>\gamma$, then $u(t)\in H^s(\mathbb{T})$ for all $t\in [-T_0,T_0]$.
\end{prop}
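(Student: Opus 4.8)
The plan is to construct the solution as the unique fixed point of the map $\mathcal{K}$ from \eqref{fix} by Picard's theorem on a closed ball of $X^{\gamma,b}_\alpha(T)$, following the scheme of \cite[Theorem 3]{BGT}. Three ingredients drive the argument: the linear bound $\|\varphi(t)S(t)u_0\|_{X^{\gamma,b}_\alpha}=\|\varphi\|_{H^b}\|u_0\|_{H^\gamma}$ recalled just before proposition \ref{nonl}, the trilinear estimate of proposition \ref{nonl}, and an inhomogeneous (Duhamel) estimate that I would establish first, namely that for $0\le b'<\frac{1}{2}<b$ with $b+b'\le 1$,
\[
\Big\|\varphi(\tfrac{t}{T})\int_0^tS(t-s)F(s)\,ds\Big\|_{X^{\gamma,b}_\alpha}\le C\,T^{1-b-b'}\|F\|_{X^{\gamma,-b'}_\alpha}.
\]
After conjugating by $S(-t)$ this reduces, frequency by frequency, to a one-dimensional statement on $g\mapsto \varphi(t/T)\int_0^t g$ between weighted $H^b(\mathbb{R})$ spaces in time; the gain $T^{1-b-b'}$ is produced by the cutoff $\varphi(t/T)$ together with $b+b'<1$, exactly as in \cite{BGT}. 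The positivity of $1-b-b'$, granted by the hypotheses of proposition \ref{nonl}, is precisely what makes the scheme close.

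Combining this with proposition \ref{nonl}, I obtain for $T\le 1$ the bounds $\|\mathcal{K}u\|_{X^{\gamma,b}_\alpha}\le C\|u_0\|_{H^\gamma}+C\,T^{1-b-b'}\|u\|_{X^{\gamma,b}_\alpha}^3$ and the Lipschitz estimate $\|\mathcal{K}u-\mathcal{K}v\|_{X^{\gamma,b}_\alpha}\le C\,T^{1-b-b'}(\|u\|_{X^{\gamma,b}_\alpha}^2+\|v\|_{X^{\gamma,b}_\alpha}^2)\|u-v\|_{X^{\gamma,b}_\alpha}$. Setting $R:=2C\|u_0\|_{H^\gamma}$ and choosing $T_0$ so that $C\,T_0^{1-b-b'}R^2\le \frac{1}{4}$ --- a condition involving only $\|u_0\|_{H^\gamma}$ --- makes $\mathcal{K}$ a contraction of the ball $\{\|u\|_{X^{\gamma,b}_\alpha(T)}\le R\}$ into itself for every $T\le T_0$ (the passage between $X^{\gamma,b}_\alpha$ and the restriction space being handled through extensions and the cutoffs $\varphi$, $\varphi(\cdot/T)$ in the usual way). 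The fixed point is the desired solution and automatically obeys $\|u\|_{X^{\gamma,b}_\alpha(T)}\le R=2C\|u_0\|_{H^\gamma}$; uniqueness in the whole space follows from the Lipschitz bound together with a continuity-in-$T$ argument.

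For the persistence statement I would first observe that the proof of proposition \ref{nonl} in fact yields the mixed estimate
\[
\|u_1\overline{u_2}u_3\|_{X^{s,-b'}_\alpha}\le C\big(\|u_1\|_{X^{s,b}_\alpha}\|u_2\|_{X^{\gamma,b}_\alpha}\|u_3\|_{X^{\gamma,b}_\alpha}+\text{(two analogous terms)}\big),
\]
since there the highest spatial frequency always dominates (one has $N_0\lesssim N_1=\max_j N_j$): the weight $N_0^s$ can be absorbed by placing the $s$-regularity on whichever factor carries that top frequency, the other two retaining only the $\gamma$-weight. Feeding this into the Duhamel estimate gives, on the same ball, $\|\mathcal{K}u\|_{X^{s,b}_\alpha}\le C\|u_0\|_{H^s}+C\,T^{1-b-b'}\|u\|_{X^{\gamma,b}_\alpha}^2\,\|u\|_{X^{s,b}_\alpha}$. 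The crucial feature is that the coefficient of $\|u\|_{X^{s,b}_\alpha}$ equals the already-controlled quantity $C\,T^{1-b-b'}R^2\le\frac14$, so $\mathcal{K}$ also contracts the ball $\{\|u\|_{X^{s,b}_\alpha(T)}\le 2C\|u_0\|_{H^s}\}$ for the \emph{same} $T_0$. Its fixed point lies in $X^{s,b}_\alpha(T)\subset X^{\gamma,b}_\alpha(T)$, hence coincides with $u$ by uniqueness, proving $u(t)\in H^s$ for all $t\in[-T_0,T_0]$.

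The main obstacle is the bookkeeping in the first and third steps rather than any single hard inequality. Establishing the Duhamel estimate with a genuine positive power of $T$ demands care with the time cutoffs and is where the hypothesis $b+b'<1$ is finally spent. The persistence statement is delicate precisely because it must hold on the full interval $[-T_0,T_0]$ fixed by the low norm $\|u_0\|_{H^\gamma}$, and not merely on a shorter interval controlled by $\|u_0\|_{H^s}$; this hinges on the fact that the mixed trilinear estimate is \emph{linear} in the highest-regularity factor, with smallness factor governed by the already-bounded $X^{\gamma,b}_\alpha$ norm.
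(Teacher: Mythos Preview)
Your proposal is correct and follows essentially the same approach as the paper's proof: the Duhamel estimate with gain $T^{1-b-b'}$ (which the paper obtains by quoting a lemma of Ginibre, equivalent to your frequency-by-frequency time reduction), proposition~\ref{nonl}, Picard contraction in a ball of $X^{\gamma,b}_\alpha(T)$, and the mixed trilinear estimate (linear in the highest-regularity factor) for persistence. The only cosmetic difference is that for persistence the paper runs the $X^{s,b}_\alpha$ fixed point on a possibly shorter subinterval --- still depending only on $\|u_0\|_{H^\gamma}$ --- and then iterates to fill $[-T_0,T_0]$, whereas you note (correctly) that the smallness condition is literally the one already imposed, so the original $T_0$ works directly; just make explicit that the $X^{s,b}_\alpha$ contraction is performed on the \emph{intersection} with the $\gamma$-ball, so that the factor $C\,T_0^{1-b-b'}\|u\|_{X^{\gamma,b}_\alpha}^2$ is genuinely bounded by $\tfrac14$.
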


\begin{proof}[Proof]
We intend to show that the functional $\mathcal{K}$, defined in \eqref{fix}, is a contraction in some ball of the space $X^{\gamma ,b}_\alpha(T)$, for well-chosen $T$.

Since the first part of $\mathcal{K}$ has been previously bounded, we turn to
\[ M_2(u):=\left\| \varphi \left( \textstyle\frac{t}{T}\displaystyle \right) \int_0^t S(t-s)\left[ |u(s)|^2u(s)\right] ds\right\|_{X^{\gamma ,b}_\alpha(T)} = \left\| \varphi \left( \textstyle\frac{t}{T}\displaystyle \right) \int_0^t S(-s)\left[ |u(s)|^2u(s)\right] ds\right\| _{H^{\gamma ,b}}.\]
Here, we take advantage of the regularizing property of time-integration. Let $U\in \mathcal{C}^\infty_0(\mathbb{R}\times\mathbb{T})$, and $b$, $b'$ as in proposition \ref{nonl}. Set $G(t,x):= \varphi \left( \frac{t}{T}\right) \int_0^t U(s,x) ds$. For fixed $k\in \mathbb{Z}$, a lemma of Ginibre \cite[lemma (3.11)]{gin} guarantees that
\[\|\hat{G}(t,k)\|_{H^b(\mathbb{R}_t)}=\left\| \varphi \left( \textstyle\frac{t}{T}\displaystyle\right) \int_0^t \hat{U}(s,k) ds\right\|_{H^b(\mathbb{R}_t)} \lesssim T^{1-b-b'}\|\hat{U}(t,k)\|_{H^{-b'}(\mathbb{R}_t)},\]
with an implicit constant which only depends on $\varphi$, $b$, $b'$. Squaring this identity, we find
\[ \int_{\mathbb{R}} (1+|\tau |^2)^b|\mathcal{F}(G)(\tau ,k)|^2 d\tau \lesssim T^{1-b-b'}\int_{\mathbb{R}} (1+|\tau |^2)^{-b'}|\mathcal{F}(U)(\tau ,k)|^2 d\tau.\]
Eventually, multiply by $(1+|k|^2)^\gamma$ and sum over $k$ to get $\|G\|_{H^{\gamma ,b}}\lesssim T^{1-b-b'}\|U\|_{H^{\gamma ,-b'}}$, which remains true, by approximation, for less regular functions.

As a consequence,
\[M_2(u)\lesssim T^{1-b-b'}\|S(-t)\left[ |u(t)|^2u(t)\right]\|_{H^{\gamma ,-b'}}=T^{1-b-b'}\||u|^2u\|_{X^{\gamma ,-b'}_\alpha} \lesssim T^{1-b-b'}\|u\|_{X^{\gamma ,b}_\alpha}^3, \]
by proposition \ref{nonl}. This calculation is valid for any $\tilde{u}\in X^{\gamma ,b}_\alpha$ such that $\tilde{u}=u$ on $[-T,T]$. Thus, we proved that
\begin{equation}\label{kontract} \|\mathcal{K}(u)\|_{X^{\gamma ,b}_\alpha(T)} \leq \|\varphi\|_{H^b(\mathbb{R}_t)}\|u_0\|_{H^\gamma(\mathbb{T})} + CT^{1-b-b'}\|u\|_{X^{\gamma ,b}_\alpha(T)}^3.
\end{equation}
So $\mathcal{K}$ stabilizes the ball $B$ centered at the origin, of radius $\tilde{C}\|u_0\|_{H^\gamma}$ (for some arbitrary $\tilde{C}> \|\varphi\|_{H^b(\mathbb{R}_t)}$), provided that $T\leq T_0$ with
\[T_0:=\left(\frac{\tilde{C}-\|\varphi\|_{H^b(\mathbb{R}_t)}}{\tilde{C}^3C\|u_0\|_{H^\gamma}^2} \right)^\frac{1}{1-b-b'}.\]

Reasoning in the same way, by means of the identity
\[|u|^2u-|v|^2v=u(\overline{u-v})u+(u-v)\bar{v}(u+v),\]
we show that for $T\leq T_0$\footnote{Or possibly a fixed fraction of $T_0$.}, there exist a positive constant $c<1$ such that $\|\mathcal{K}(u)-\mathcal{K}(v)\|_{X^{\gamma ,b}_\alpha(T)} \leq c\|u-v\|_{X^{\gamma ,b}_\alpha(T)}$ for all $u$, $v\in B$. Hence $\mathcal{K}:B\to B$ is a contraction, and has a fixed point, also called $u$. Since $u\in B$, we have $\|u\|_{X^{\gamma ,b}_\alpha(T)}\leq \tilde{C}\|u_0\|_{H^\gamma(\mathbb{T})}$.

To prove the uniqueness of $u$, notice first, in view of \eqref{kontract}, that any other fixed point of $\mathcal{K}$ in $X^{\gamma ,b}_\alpha(T)$, as a function of some $X^{\gamma ,b}_\alpha(\tilde{T})$ for some smaller $\tilde{T}\leq T$, lies in the ball centered at $0$ and of radius $\tilde{C}\|u_0\|_{H^\gamma}$, so equals $u$ in that space, by unicity of the fixed point. Now let $v\in X^{\gamma ,b}_\alpha(T)$ be another solution of \eqref{mmt}. Observe that, because $b>\frac{1}{2}$, both $u$ and $v$ are continuous functions from $[-T,T]$ to $H^\gamma(\mathbb{T})$. Define $T_1:=\sup \lbrace t\in [0,T]\mid u(t)=v(t)\rbrace$, and suppose $T_1<T$. Then, translating time, and restarting the equation with $u(T_1)=v(T_1)$ as an initial data, we get a contradiction, by the previous remark.

Finally, if $u_0\in H^s(\mathbb{T})$ for $s>\gamma$, and if $u$ is the associated solution in $X^{\gamma ,b}_\alpha(T_0)$, let us show that $u(t)\in H^s(\mathbb{T})$ for all $|t|\leq T_0$. It is crucial to see, modifying slightly the proof of proposition \ref{nonl}, that whenever $s>\gamma>\frac{1}{2}-\frac{\alpha}{4}$, and $b$, $b'$ as above,
\[\|u_1\overline{u_2}u_3\|_{X^{s,-b'}_\alpha} \leq C\sum_{j=1}^3\left( \|u_j\|_{X^{s,b}_\alpha}\prod_{k\neq j}\|u_k\|_{X^{\gamma,b}_\alpha}\right) .\]
Given $\tilde{u}$ in the intersection of the ball of radius $\tilde{C}\|u_0\|_{H^s}$ in the space $X^{s,b}_\alpha(T)$, and of the ball of radius $C'\|u_0\|_{H^\gamma}$ in the space $X^{\gamma ,b}_\alpha(T_0)$, \eqref{kontract} becomes, for $T\leq T_0$ :
\begin{align*}
\|\mathcal{K}(\tilde{u})\|_{X^{s ,b}_\alpha(T)} &\leq \|\varphi\|_{H^b(\mathbb{R}_t)}\|u_0\|_{H^s(\mathbb{T})} + CT^{1-b-b'}\|\tilde{u}\|_{X^{\gamma ,b}_\alpha(T)}^2\|\tilde{u}\|_{X^{s ,b}_\alpha(T)} \\
&\leq \left( \|\varphi\|_{H^b(\mathbb{R}_t)}+C\tilde{C}C'^2T^{1-b-b'}\|u_0\|_{H^\gamma(\mathbb{T})}^2\right) \|u_0\|_{H^s(\mathbb{T})}.
\end{align*}
This shows that if $T$ is chosen small enough, \emph{regardless} of the size of $\|u_0\|_{H^s}$, $\mathcal{K}$ stabilizes the set we described above. The same can be done while estimating $\|\mathcal{K}(u)-\mathcal{K}(v)\|_{X^{s ,b}_\alpha(T)}$, and $\mathcal{K}$ therefore has a fixed point $\tilde{u}$ in some $X^{s,b}_\alpha(T)$. Obviously, since $X^{s ,b}_\alpha(T)\hookrightarrow X^{\gamma ,b}_\alpha(T)$, we have $\tilde{u}\equiv u$ on $[-T,T]$. Repeating this argument after translating time and restarting the equation from $\tilde{u}(T)=u(T)$, the claim is proved.
\end{proof}

The next corollary follows as an immediate consequence, and uses explicitly the condition $\alpha>\frac{2}{3}$ :

\begin{cor}\label{cor10}
Let $u_0 \in \mathcal{C}^\infty(\mathbb{T})$. Then \eqref{mmt} admits a unique global solution $u\in \mathcal{C}(\mathbb{R},\mathcal{C}^\infty(\mathbb{T}))$.

Besides, let $b\in (\frac{1}{2},1)$. For any $\gamma \in (\frac{1}{2}-\frac{\alpha}{4}, \frac{\alpha}{2}]$, there exist $T_0^{(\gamma)}$,  $C_\gamma >0$, such that for any $t\in \mathbb{R}$,
\begin{equation}\label{moy} \|u(\cdot -t)\|_{X^{\gamma ,b}_\alpha(T_0^{(\gamma)})}\leq C_\gamma\|u_0\|_{H^\gamma(\mathbb{T})}.
\end{equation}
\end{cor}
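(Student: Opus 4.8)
The plan is to read the corollary off the local well-posedness proposition just proved, whose decisive feature is that the existence time $T_0$ depends \emph{only} on $\|u_0\|_{H^\gamma}$; a time-independent a priori bound on that norm then converts the local statement into a global one and, by feeding the proposition at each time, produces the uniform estimate \eqref{moy}. First I would work in the energy space, taking $\gamma=\tfrac{\alpha}{2}$. The restriction $\alpha>\tfrac23$ is exactly the inequality $\tfrac{\alpha}{2}>\tfrac12-\tfrac{\alpha}{4}$, so the proposition applies with this value of $\gamma$. Conservation of $Q+\mathcal{H}_\alpha$ supplies the a priori bound: since the defocusing term $\tfrac14\|u\|_{L^4}^4$ is nonnegative and $\tfrac12(|D|^\alpha u,u)+\tfrac12\|u\|_{L^2}^2\simeq\|u\|_{H^{\alpha/2}}^2$, one gets $\|u(t)\|_{H^{\alpha/2}}^2\lesssim(Q+\mathcal{H}_\alpha)(u_0)=:R^2$ on any interval of existence (this is \ref{alpha/2} of Lemma \ref{baba}). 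As the proposition's existence time is a fixed decreasing function of the data norm, it stays bounded below by some $T_*>0$ as long as the $H^{\alpha/2}$-norm remains $\le R$, which it always does; iterating the local construction over consecutive windows of length $T_*$ yields a global solution $u\in\mathcal{C}(\mathbb{R},H^{\alpha/2})$.

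Next I would propagate regularity using the last part of the proposition: if $u_0\in H^s$ with $s>\tfrac{\alpha}{2}$ then $u(t)\in H^s$ on $[-T_0,T_0]$, with $T_0$ still governed by $\|u_0\|_{H^{\alpha/2}}$ alone. Since that norm stays $\le R$ for all time, the \emph{same} step length $T_*$ works at every iteration, so $H^s$-regularity persists on all of $\mathbb{R}$. As $u_0\in\mathcal{C}^\infty=\bigcap_s H^s$, this holds for every $s$, and the standard embedding $X^{s,b}_\alpha\hookrightarrow\mathcal{C}(\mathbb{R}_t,H^s)$ valid for $b>\tfrac12$ gives $u\in\mathcal{C}(\mathbb{R},\mathcal{C}^\infty(\mathbb{T}))$.

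For the uniform bound, fix $\gamma\in(\tfrac12-\tfrac{\alpha}{4},\tfrac{\alpha}{2}]$ and $t\in\mathbb{R}$, and set $v(s,x):=u(s-t,x)$, which solves \eqref{mmt} with the smooth data $v(0)=u(-t)$. Applying the proposition to $v$, and noting that $\gamma\le\tfrac{\alpha}{2}$ gives $\|v(0)\|_{H^\gamma}\le\|u(-t)\|_{H^{\alpha/2}}\le R$ uniformly in $t$, the existence time it returns can be taken to be a single $T_0^{(\gamma)}>0$ (the one attached to data of size $R$), and the local solution obeys $\|v\|_{X^{\gamma,b}_\alpha(T_0^{(\gamma)})}\le C\|v(0)\|_{H^\gamma}$. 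By uniqueness in $X^{\gamma,b}_\alpha(T_0^{(\gamma)})$, this local solution is the restriction of the global $v=u(\cdot-t)$, so the bound transfers verbatim and gives \eqref{moy} with $C_\gamma$ and $T_0^{(\gamma)}$ independent of $t$.

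The genuinely new analysis lies entirely in the earlier Strichartz and nonlinear estimates; here the only delicate point is \emph{uniformity in time}, which I expect to be the main obstacle. It rests on two bookkeeping facts that must be invoked cleanly: that the local time in the proposition is controlled by the conserved $H^{\alpha/2}$-norm alone (so it never degenerates as $t\to\infty$), and that time-translation maps \eqref{mmt} to itself, which lets one use uniqueness to identify the translated local solution with the global flow and thereby match the restriction-space norms across translates. Verifying that $\alpha>\tfrac23$ places $\tfrac{\alpha}{2}$ in the admissible range is precisely what makes this whole scheme available in the energy space.
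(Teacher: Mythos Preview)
Your proof is correct and follows essentially the same approach as the paper: observe that $\alpha>\tfrac{2}{3}$ is equivalent to $\tfrac{\alpha}{2}>\tfrac{1}{2}-\tfrac{\alpha}{4}$ so the local proposition applies in the energy space, use the conservation of $Q+\mathcal{H}_\alpha$ to keep $\|u(t)\|_{H^{\alpha/2}}$ uniformly bounded and thereby iterate globally, then invoke the persistence-of-regularity clause to obtain $\mathcal{C}^\infty$ smoothness and feed the proposition at each translated time to get \eqref{moy}. The paper's own proof is essentially a two-line sketch of exactly this argument; your version simply spells out the time-translation and uniqueness steps that make the constants in \eqref{moy} uniform in $t$.
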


\begin{proof}[Proof]
It suffices to notice that
\[\textstyle\alpha >\frac{2}{3} \Longleftrightarrow \frac{1}{2}-\frac{\alpha}{4}<\frac{\alpha}{2},\]
so there exist $T_0>0$, only depending on $\|u_0\|_{H^{\alpha /2}}$, such that \eqref{mmt} can be solved locally in $X^{\alpha /2 , b}_\alpha(T_0)$. But the energy $\mathcal{H}_\alpha$ and the mass $Q$ are conserved along the trajectory, so $\|u(t)\|_{H^{\alpha /2}}$ remains bounded by $2(\mathcal{H}_\alpha+Q)(u_0)$. This proves that the solution is global, and we have $u(t)\in \mathcal{C}^\infty(\mathbb{T})$ for all $t\in \mathbb{R}$ because of the second part of the previous proposition. \eqref{moy} also follows.
\end{proof}

\subsection{End of the proof of theorem \ref{theommt}}\label{fin}
All the needed results are gathered : now we can study the growth of the Sobolev norms of the solutions of \eqref{mmt} for $\alpha\in (\frac{2}{3},1)$.

We begin with the $H^\alpha$-norm, introducing as in \eqref{energy} the modified energy :
\[\mathcal{E}_\alpha (u) := \|u\|_{L^2}^2 + \||D|^{\alpha}u\|_{L^2}^2
+ \underbrace{2\Re e(|D|^{\alpha}u, |u|^2u)}_{=:J_1(u)} -\underbrace{\textstyle\frac{1}{2} \displaystyle (|D|^{\alpha}(|u|^2),|u|^2)}_{=:J_2(u)},\]
when $u$ is a solution of \eqref{mmt}. $J_1$ and $J_2$ are of lower order than $\|u\|_{H^\alpha}^2$, so that when $\|u\|_{H^\alpha}$ is big enough, arguing as in section \ref{ozv}, we have $\frac{1}{2}\|u\|_{H^{\alpha}}^2 \leq \mathcal{E}_\alpha (u) \leq 2\|u\|_{H^{\alpha}}^2$.

Let us study the evolution of $\mathcal{E}_\alpha (u)$. The $L^2$-norm is conserved, so we directly pass on to
\[ \frac{d}{dt} \||D|^{\alpha}u\|_{L^2}^2 = 2\Re e (|D|^\alpha \dot{u},|D|^\alpha u) = -2\Re e(|D|^\alpha \dot{u},|u|^2u).\]
This combines with the derivative of $J_1(u)$, and gives rise to two terms :
\[ \frac{d}{dt} [\, \||D|^{\alpha}u\|_{L^2}^2+J_1(u)] = 2\Re e(|D|^\alpha u,\dot{u}|u|^2)+2\Re e(|D|^\alpha u,u(|u|^2)\dot{\:})=:Q_1(u)+Q_2(u).\]
Thanks to the equation, a simplification occurs : $Q_1(u)=-2\Im m (|D|^\alpha u,|u|^4u)$. The Sobolev embedding $H^{2/5}\hookrightarrow L^{10}$ and interpolation between $H^{\alpha /2}$ and $H^\alpha$ then allows to bound
\[ |Q_1(u)|\lesssim \|u\|_{H^\alpha}^{2-\varepsilon}, \quad\text{where }\varepsilon:=\tfrac{6}{\alpha}\left(\alpha-\tfrac{2}{3}\right) >0.\]
On the other hand, $Q_2(u)$ combines with the derivative of $J_2(u)$ :
\[Q_2(u)+\tfrac{d}{dt}J_2(u)=(\bar{u}|D|^\alpha u+u|D|^\alpha\bar{u}-|D|^\alpha(\bar{u}u),(|u|^2)\dot{\:}).\]
Since $(|u|^2)\dot{\:}=i(u|D|^\alpha\bar{u}-\bar{u}|D|^\alpha u)$, we bound $\|(|u|^2)\dot{\:}\|_{L^2}\lesssim \|u\|_{L^\infty}\|u\|_{H^\alpha}$. As for the other side of the scalar product, we appeal to lemma \ref{KPV}, since $\alpha <1$:
\[\|\bar{u}|D|^\alpha u+u|D|^\alpha\bar{u}-|D|^\alpha(\bar{u}u)\|_{L^2}\lesssim \||D|^\frac{\alpha}{2}u\|_{L^4}^2.\]
At this point, we have a Gagliardo-Nirenberg inequality :

\begin{lemme}\label{gn}
For any $p>2$, $s>0$, there exist $C>0$ such that
\[\||D|^sf\|_{L^p}\leq C\left( \|f\|_{L^\infty}+\left\||D|^{s\frac{p}{2}}f\right\|_{L^2}^\frac{2}{p}\left\| f\right\|_{L^\infty}^{1-\frac{2}{p}}\right) .\]
for every function $f:\mathbb{T}\to\mathbb{R}$.
\end{lemme}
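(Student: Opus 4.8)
The plan is to reduce the inequality to a \emph{pointwise} bound on a Littlewood--Paley square function, exploiting that the exponent $\tfrac2p$ is exactly the one produced by balancing low and high frequencies scale by scale. Write $\sigma:=\tfrac{sp}{2}$, and introduce the two square functions
\[ G(x):=\Bigl(\sum_N N^{2s}|\Delta_N f(x)|^2\Bigr)^{1/2},\qquad H(x):=\Bigl(\sum_N N^{2\sigma}|\Delta_N f(x)|^2\Bigr)^{1/2}.\]
First I would recall the $L^p$ square-function characterisation of fractional Sobolev norms (a consequence of the Littlewood--Paley theorem, valid for $1<p<\infty$, since $\Delta_N|D|^s$ has symbol $N^s\psi(|k|/N)(|k|/N)^s$, a harmless modification of $\Delta_N$): this gives $\||D|^sf\|_{L^p}\lesssim\|G\|_{L^p}$, while $\|H\|_{L^2}\simeq\||D|^{sp/2}f\|_{L^2}$ by \eqref{LPsob}. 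Hence everything reduces to estimating $\|G\|_{L^p}$.

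The heart of the argument is a pointwise estimate for $G$, obtained by splitting its defining sum at a cutoff chosen \emph{depending on} $x$. For a dyadic $N_0\geq1$, I would bound the low modes by $\|\Delta_N f\|_{L^\infty}\lesssim\|f\|_{L^\infty}$ (the $\Delta_N$ being uniformly bounded on $L^\infty$) and, in the high modes, factor $N^{2s}=N^{2s-2\sigma}N^{2\sigma}$ where $2s-2\sigma<0$ because $p>2$:
\[ G(x)^2\lesssim \|f\|_{L^\infty}^2\sum_{N\leq N_0}N^{2s}+N_0^{2s-2\sigma}\sum_{N>N_0}N^{2\sigma}|\Delta_N f(x)|^2\lesssim N_0^{2s}\|f\|_{L^\infty}^2+N_0^{2s-2\sigma}H(x)^2.\]
Optimising in $N_0$ — concretely taking $N_0\simeq(H(x)/\|f\|_{L^\infty})^{1/\sigma}$ where $H(x)\geq\|f\|_{L^\infty}$ and $N_0=1$ otherwise — and using $2s/\sigma=4/p$, one gets the pointwise bound
\[ G(x)\lesssim \|f\|_{L^\infty}+\|f\|_{L^\infty}^{1-\frac2p}H(x)^{\frac2p}.\]

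It then remains to integrate. Taking the $L^p$ norm and using the elementary identity $\bigl\|H^{2/p}\bigr\|_{L^p}=\|H\|_{L^2}^{2/p}$ (as $\tfrac2p\cdot p=2$), together with $\|f\|_{L^\infty}|\mathbb{T}|^{1/p}\lesssim\|f\|_{L^\infty}$, yields
\[ \||D|^sf\|_{L^p}\lesssim\|G\|_{L^p}\lesssim\|f\|_{L^\infty}+\|f\|_{L^\infty}^{1-\frac2p}\|H\|_{L^2}^{\frac2p}\lesssim\|f\|_{L^\infty}+\|f\|_{L^\infty}^{1-\frac2p}\||D|^{sp/2}f\|_{L^2}^{\frac2p},\]
which is the claim. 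The delicate point is precisely the extraction of the sharp exponent $\tfrac2p$: a naive summation of the pieces by Minkowski's inequality (that is, $\|G\|_{L^p}\leq(\sum_N N^{2s}\|\Delta_N f\|_{L^p}^2)^{1/2}$ followed by scale-by-scale interpolation) only controls a weaker Besov-type norm and loses a power of the number of active dyadic scales, so it fails to give the stated estimate; the pointwise optimisation of the frequency cutoff inside the square function is exactly what avoids this loss. The only inputs beyond elementary interpolation are the $L^p$ Littlewood--Paley theorem and the uniform $L^\infty$-boundedness of the $\Delta_N$, both standard, and I would also check the (harmless) rounding of the optimal $N_0$ to a dyadic integer; real-valuedness of $f$ plays no role.
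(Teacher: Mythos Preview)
Your argument is correct. The paper does not actually supply a proof of this lemma: it is simply labelled ``a Gagliardo--Nirenberg inequality'' and quoted as a known fact before being applied. Your Littlewood--Paley proof is a clean and self-contained way to obtain it, and the key step --- the pointwise optimisation of the dyadic cutoff inside the square function, which produces exactly the exponent $2/p$ through the identity $\|H^{2/p}\|_{L^p}=\|H\|_{L^2}^{2/p}$ --- is the right mechanism. The two facts you invoke (the $L^p$ square-function characterisation $\||D|^sf\|_{L^p}\simeq\|G\|_{L^p}$, robust under replacing $\psi$ by $\psi(\cdot)|\cdot|^s$, and the uniform $L^\infty$-boundedness of the $\Delta_N$) are standard, and your handling of the two regimes $H(x)\gtrless\|f\|_{L^\infty}$ and of the dyadic rounding of $N_0$ is fine. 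Your closing remark that real-valuedness is irrelevant is also accurate.
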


Choose a real $\gamma \in (\frac{1}{2}-\frac{\alpha}{4},\frac{\alpha}{2})$, for instance $\gamma = \frac{2+\alpha}{8}$, and apply lemma \ref{gn} with $f=|D|^{\frac{\alpha}{2}-\gamma}u$, $p=4$ and $s=\gamma$. Thus
\[ \||D|^\frac{\alpha}{2}u\|_{L^4}^2 \lesssim \||D|^{\frac{\alpha}{2}+\gamma}u\|_{L^2}\||D|^{\frac{\alpha}{2}-\gamma}u\|_{L^\infty}\]
--- the other terms can be neglected.

All these calculations lead to the following fact : there exist a small $\theta >0$ (which can be chosen to be $\frac{3\alpha -2}{8\alpha}$), and constants $C_1,C_2>0$ such that for all $t\in \mathbb{R}$,
\[ \|u(t)\|_{H^\alpha}^2 \leq C_1\|u_0\|_{H^\alpha}^2 +C_2\int_0^t \|u(\tau )\|_{H^\alpha}^{2-2\theta}\|u(\tau )\|_{L^\infty}\||D|^{\frac{\alpha}{2}-\gamma}u(\tau )\|_{L^\infty}d\tau.\]
Denoting by $f(t)$ the right hand side of this inequality, and assuming that $t\geq 0$ without loss of generality, this implies that
\begin{align*}
f(t)^\theta-f(0)^\theta &= \int_0^t \frac{f'(\tau)d\tau}{f(\tau )^{1-\theta}} \\
&\leq C_2\int_0^t \|u(\tau )\|_{L^\infty}\||D|^{\frac{\alpha}{2}-\gamma}u(\tau )\|_{L^\infty}d\tau \\
&\leq C_2\sqrt{t}\cdot\|u\|_{L^4([0,t],L^\infty)}\||D|^{\frac{\alpha}{2}-\gamma}u\|_{L^4([0,t],L^\infty)} \\
&\leq C_2\sqrt{t}\left(\sum_{k=0}^{\lceil\frac{t}{T_0^{(\gamma )}}\rceil}\|u(\cdot-kT_0^{(\gamma)})\|_{X^{\gamma ,b}_\alpha(T_0^{(\gamma)})}\right)\left(\sum_{k=0}^{\lceil\frac{t}{T_0^{(\alpha /2)}}\rceil}\|u(\cdot-kT_0^{(\alpha /2)})\|_{X^{\frac{\alpha}{2} ,b}_\alpha(T_0^{(\alpha /2)})}\right) \\
&\leq C_2C_\gamma C_\frac{\alpha}{2}\|u_0\|_{H^\gamma}\|u_0\|_{H^{\frac{\alpha}{2}}}\sqrt{t}\left(\left\lceil \frac{t}{T_0^{(\gamma)}} \right\rceil +1\right) \cdot \left(\left\lceil \frac{t}{T_0^{(\alpha /2)}} \right\rceil +1\right),
\end{align*}
where we fixed a real $b\in (\frac{1}{2},1)$, and used the localized version of lemma \ref{linfty}, as well as \eqref{moy}. This achieves to show that the $H^\alpha$-norm of the solution of \eqref{mmt} grows at most polynomially, with the power of $t$ being less than $\frac{5}{4\theta}$, hence than $\frac{10 \alpha}{3\alpha -2}$.

\vspace{1em}
The end of the proof crucially relies on this first step. Indeed, to estimate the evolution of the $H^{\alpha +n}$-norm of $u$, with $n\geq 1$, we follow exactly the same scheme as for the proof of theorems \ref{theohw} and \ref{theommt} in section \ref{ozv}. Each time the $L^\infty$-norm of $u$ appears, we bound it by $\|u\|_{H^\alpha}$ (recall that $\alpha >\frac{1}{2}$). Besides, we do not interpolate the $H^s$-norms between $H^{\alpha /2}$ and $H^{\alpha +n}$ anymore, but between $H^{\alpha}$ and $H^{\alpha +n}$.

The only difference is that we need a new (and, to some extent, rougher) version of lemma \ref{leiblem} :

\begin{lemme}
Let $\frac{1}{2}<\alpha < 1$. For any integer $n\geq 1$, there is a constant $C_n>0$ (independent of $\alpha$) such that for all function $u\in H^{\alpha +n}(\mathbb{T})$,
\[ \|\bar{u}|D|^\alpha u+u|D|^\alpha \bar{u}-|D|^\alpha (\bar{u}u)\|_{H^{n}} \leq C \|u\|_{H^{\alpha}}^{1+\frac{1}{n}(\alpha -\frac{1}{2})}\|u\|_{H^{\alpha +n}} ^{1-\frac{1}{n}(\alpha -\frac{1}{2})}.\]
\end{lemme}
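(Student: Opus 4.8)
The plan is to run the argument of Lemma~\ref{leiblem} essentially verbatim, changing only the endpoint of one sequence-space interpolation. Writing $u = \sum_k u_k e^{ikx}$ and $F_\alpha(u) := \bar{u}|D|^\alpha u + u|D|^\alpha\bar{u} - |D|^\alpha(\bar{u}u)$, the Fourier coefficients of $F_\alpha(u)$ carry the symbol $|l|^\alpha + |k-l|^\alpha - |k|^\alpha = \varphi(l/k)\,|l|^{\alpha/2}|k-l|^{\alpha/2}$, and since $\alpha \le 2$ we still have $\|\varphi\|_{L^\infty} = 2$, uniformly in $\alpha$. Using $|k|^n \le 2^{n-1}(|l|^n + |k-l|^n)$ to distribute the $n$ space-derivatives, one reduces $\|F_\alpha(u)\|_{H^n}^2$ to two symmetric convolution sums; treating the first and applying Schur's (Young's) inequality $\|v\star w\|_{\ell^2}\le \|v\|_{\ell^2}\|w\|_{\ell^1}$ to $v := (|l|^{\alpha/2+n}|u_l|)_l$ and $w := (|l|^{\alpha/2}|u_{-l}|)_l$ gives
\[ \|F_\alpha(u)\|_{H^n}^2 \lesssim \|v\|_{\ell^2}^2\,\|w\|_{\ell^1}^2, \qquad \|v\|_{\ell^2} \lesssim \|u\|_{H^{\alpha/2+n}}. \]
All of this is identical to the case $\alpha \ge 1$ treated in Lemma~\ref{leiblem}.

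The one genuine change is the treatment of $\|w\|_{\ell^1}$. For $\alpha \ge 1$ one bounds $\|w\|_{\ell^1} \lesssim \|w\|_{\ell^2}^{1/2}\|w\|_{h^1}^{1/2}$, which produces the factor $\|u\|_{H^{\alpha/2}}$; but for $\alpha < 1$ the index $\alpha/2$ lies \emph{below} $\alpha$, so this factor cannot be interpolated between $H^\alpha$ and $H^{\alpha+n}$ (and $H^{\alpha/2}$ no longer embeds into $L^\infty$, which is exactly why the base norm of the whole scheme has been raised from $H^{\alpha/2}$ to $H^\alpha$). Instead I would interpolate $\ell^1$ between $h^{\alpha/2}$ and $h^1$: the split-at-$N$ Cauchy--Schwarz estimate
\[ \|w\|_{\ell^1} \le \Big(\sum_{|l|\le N}(1+l^2)^{-\alpha/2}\Big)^{1/2}\|w\|_{h^{\alpha/2}} + \Big(\sum_{|l|> N}(1+l^2)^{-1}\Big)^{1/2}\|w\|_{h^1} \]
is summable at both ends precisely because $\alpha/2 < \tfrac12 < 1$; optimising over $N$ yields $\|w\|_{\ell^1} \lesssim \|w\|_{h^{\alpha/2}}^{1-\lambda}\|w\|_{h^1}^{\lambda}$ with $(1-\lambda)\tfrac{\alpha}{2} + \lambda = \tfrac12$, i.e. $\lambda = \tfrac{1-\alpha}{2-\alpha}$. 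The decisive point is that now $\|w\|_{h^{\alpha/2}} \simeq \|u\|_{H^\alpha}$ and $\|w\|_{h^1} \simeq \|u\|_{H^{\alpha/2+1}}$, so the low endpoint is the norm $H^\alpha$ that is controlled polynomially in the main argument.

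Combining these bounds gives $\|F_\alpha(u)\|_{H^n}^2 \lesssim \|u\|_{H^{\alpha/2+n}}^2\,\|u\|_{H^\alpha}^{2(1-\lambda)}\|u\|_{H^{\alpha/2+1}}^{2\lambda}$, and it remains to interpolate the factors $\|u\|_{H^{\alpha/2+n}}$ and $\|u\|_{H^{\alpha/2+1}}$ between $H^\alpha$ and $H^{\alpha+n}$, with interpolation parameters $1-\tfrac{\alpha}{2n}$ and $\tfrac{2-\alpha}{2n}$ respectively. A short bookkeeping, using the relation $\lambda(2-\alpha) = 1-\alpha$ that defines $\lambda$, collapses the $H^{\alpha+n}$-exponent to $2 - \tfrac{2\alpha-1}{n}$ and the $H^\alpha$-exponent to $2 + \tfrac{2\alpha-1}{n}$; taking square roots produces exactly $\|u\|_{H^\alpha}^{1+\frac1n(\alpha-\frac12)}\|u\|_{H^{\alpha+n}}^{1-\frac1n(\alpha-\frac12)}$. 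The main obstacle is precisely the borderline failure of the embedding $h^{1/2}\hookrightarrow \ell^1$: a single Cauchy--Schwarz estimate at the index $\tfrac12$ diverges, so one is forced to straddle $\tfrac12$ with the two-level split above, and the whole scheme only closes because $\alpha<1$ lets the lower index $\tfrac{\alpha}{2}$ sit strictly below $\tfrac12$ while still corresponding to the norm $H^\alpha$.
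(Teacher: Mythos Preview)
Your argument is correct and lands on exactly the stated exponents, but it is not the route the paper takes. You follow the scheme of Lemma~\ref{leiblem} faithfully --- bounded auxiliary function $\varphi$, Schur's inequality $\|v\star w\|_{\ell^2}\le \|v\|_{\ell^2}\|w\|_{\ell^1}$ --- and then modify only the $\ell^1$-bound on $w$, interpolating between $h^{\alpha/2}$ and $h^1$ instead of $\ell^2$ and $h^1$; your bookkeeping with $\lambda=\tfrac{1-\alpha}{2-\alpha}$ is right and the final exponents match. The paper, by contrast, deliberately uses a rougher and shorter argument: since $\alpha<1$, the map $x\mapsto x^\alpha$ is subadditive, so one simply bounds $|l|^\alpha+|k-l|^\alpha-|k|^\alpha\le |l|^\alpha+|k-l|^\alpha$ (no $\varphi$ needed), combines this with $|k|^n\lesssim |l|^n+|k-l|^n$, recognises the resulting sum as $\||D|^n\tilde u\cdot |D|^\alpha\overline{\tilde u}\|_{L^2}$, applies $H^{1/4}\hookrightarrow L^4$ to get $\|u\|_{H^{n+1/4}}\|u\|_{H^{\alpha+1/4}}$, and interpolates both factors between $H^\alpha$ and $H^{\alpha+n}$. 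Your approach has the virtue of explaining precisely \emph{where} the $\alpha\ge 1$ proof breaks and how to repair it; the paper's approach is shorter and avoids the $\ell^1$ interpolation altogether.
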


\begin{proof}[Proof]
Denote by $\mathcal{L}$ the left hand side of the inequality we intend to prove. Writing $u=\sum_{k\in \mathbb{Z}} u_ke^{ikx}$, we clearly have
\[ \begin{aligned} \mathcal{L}^2=&\sum_{k=-\infty}^{+\infty} |k|^{2n}\left| \sum_{l=-\infty}^{+\infty} (|l|^\alpha +|k-l|^\alpha -|k|^\alpha )u_l \overline{u_{l-k}} \right| ^2 \\
&\lesssim \sum_{k=-\infty}^{+\infty}\left[ \left| \sum_{l=-\infty}^{+\infty} |l|^{n}|k-l|^{\alpha}|u_l||\overline{u_{l-k}}|\right| ^2 +  \left| \sum_{l=-\infty}^{+\infty} |l|^{\alpha}|k-l|^{n}|u_l||\overline{u_{l-k}}|\right| ^2 \right], \end{aligned}\]
where we used the elementary inequality $|k|^n \leq 2^{n-1}(|l|^n+|k-l|^n)$ and the triangle inequality associated to the concave function $x\mapsto x^\alpha$ on $\mathbb{R}_+$. Now, define $\tilde{u}:=\sum_{k\in \mathbb{Z}} |u_k|e^{ikx}$, so that $\mathcal{L}\lesssim \||D|^n\tilde{u}\cdot|D|^\alpha\overline{\tilde{u}}\|_{L^2} \lesssim \|u\|_{H^{n+1/4}}\|u\|_{H^{\alpha +1/4}}$. Interpolating these norms between $H^{\alpha}$ and $H^{\alpha +n}$ leads to the result.
\end{proof}

All of this proves that there exist a small $\theta ' >0$, and constants $C_1$, $C_2>0$ such that for all $t\in \mathbb{R}$,
\[ \|u(t)\|_{H^{\alpha +n}}^2 \leq C_1\|u_0\|_{H^{\alpha +n}}^2 +C_2\int_0^t \|u(\tau )\|_{H^{\alpha +n}}^{2-2\theta '}\|u(\tau )\|_{H^\alpha}^{2+2\theta '} d\tau.\]
This holds with $\theta '=\frac{1}{2n}(\alpha -\frac{1}{2})$. On the other hand, we know that for some $A >0$, $\|u(\tau )\|_{H^\alpha} \lesssim (1+|\tau |)^A$ for all $\tau \in \mathbb{R}$. By Osgood's lemma, theorem \ref{theommt} is then fully established.

\section*{Appendices}
\appendix

\section{Growth of Sobolev norms for the Szeg\H{o} equation : an elementary bound}\label{sz-quad}
Let $u_0\in \mathcal{C}^\infty (\mathbb{T})$ with only nonnegative frequencies (which we denote by $u_0\in \mathcal{C}_+^\infty(\mathbb{T})$), and consider $t\mapsto u(t,x)$ the solution of the cubic Szeg\H{o} equation \eqref{sz} starting from $u_0$ at time $t=0$ : $u$ satisfies $i\partial_t u=\Pi_+(|u|^2u)$, and for all $t\in \mathbb{R}$, $u(t)$ also belongs\footnote{All the claims in this section can be found in \cite{ann} and are proven there.} to $\mathcal{C}_+^\infty(\mathbb{T})$. The purpose of this section is to give an elementary proof of the following estimate, which is the equivalent of theorem \ref{theohw} :
\begin{prop}\label{sz-prop}
For all $n\in \mathbb{N}$, there exist positive constants $C$ and $B$ such that
\begin{equation}\label{sz-t2} \|u(t)\|_{H^{1+n}}\leq Ce^{B|t|^2}.
\end{equation}
$C$ is a constant depending on $n$ and $\|u_0\|_{H^{1+n}}$, whereas $B$ can be chosen equal to $B_n\|u_0\|^8_{H^{1/2}}$ (here, $B_n$ depends only on $n$, not on the considered solution).
\end{prop}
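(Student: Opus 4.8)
The plan is to transpose the modified-energy scheme of Section \ref{ozv} that gave Theorem \ref{theohw} to the Szeg\H{o} equation \eqref{sz}. First I would record the conservation laws that replace, in the present context, the control of the energy norm used there. Since $u(t)\in\mathcal{C}_+^\infty(\mathbb{T})$, the operator $D$ acts on $u$ as $|D|$, so the momentum $M(u)=(Du,u)=\||D|^{1/2}u\|_{L^2}^2$ together with the mass $Q(u)=\frac12\|u\|_{L^2}^2$ are both conserved and yield $\|u(t)\|_{H^{1/2}}=\|u_0\|_{H^{1/2}}$ for all $t$. In particular the Brezis--Gallou\"et inequality \eqref{brezg-init} (valid for every $H^s$-function with $s>\frac12$, hence for Szeg\H{o} solutions) applies and bounds $\|u(t)\|_{L^\infty}$ by a multiple of $\|u_0\|_{H^{1/2}}\sqrt{\log(1+\|u(t)\|_{H^{1+n}}^2/\cdots)}$, just as in part \ref{brezg} of Lemma \ref{baba}.

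Because \eqref{sz} has no linear part, I would dispense with the corrective terms of \eqref{energy} and differentiate $\||D|^{1+n}u\|_{L^2}^2$ directly. Using $\dot u=-i\Pi_+(|u|^2u)$ and the holomorphy of $u$ (so that $\Pi_+$ can be inserted or dropped freely when pairing against $|D|^{1+n}u$, and $|D|^{1+n}$ becomes $(-i)^{1+n}\partial_x^{1+n}$ on $u$), one obtains
\[\frac{d}{dt}\||D|^{1+n}u\|_{L^2}^2=2\Im m\big(\partial_x^{1+n}(|u|^2u),\,\partial_x^{1+n}u\big).\]
Expanding by the Leibniz rule, every term in which the $1+n$ derivatives are shared by the two holomorphic factors $u$ produces an integrand proportional to $|u|^2\,|\partial_x^{1+n}u|^2$, whose imaginary part vanishes; the genuinely mixed terms, where each factor carries at most $n$ derivatives, are controlled exactly as the ``second term'' of Section \ref{ozv} (tame estimates, the embedding $H^{1/4}\hookrightarrow L^4$, and interpolation between $H^{1/2}$ and $H^{1+n}$), costing only one power of $\|u\|_{L^\infty}$. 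The sole remaining contribution is the \emph{diagonal} term $2\Im m\int u^2(\partial_x^{1+n}\bar u)^2$, where all derivatives fall on $\bar u$.

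The heart of the matter, and the step I expect to be hardest, is to estimate this diagonal term without the crude bound $\||u|^2\|_{L^\infty}\|u\|_{H^{1+n}}^2$: the latter carries two powers of $\|u\|_{L^\infty}$ and, through Brezis--Gallou\"et, would only reproduce the double-exponential bound already known. This is the exact counterpart of the role of Lemma \ref{leiblem} for the half-wave equation. On the Fourier side the term is a quadrilinear sum over $k_1+k_3=k_2+k_4$ weighted by the symbol difference $(k_2k_4)^{1+n}-(k_1k_3)^{1+n}$, and the point is that this difference gains a derivative relative to the trivial weight $(k_2k_4)^{1+n}$. I would exploit this cancellation, by a factorisation of the symbol in the spirit of the proof of Lemma \ref{leiblem} followed by Schur- and Cauchy--Schwarz-type estimates, to bound the diagonal term by a bounded multiple of $\|u\|_{L^\infty}\,\|u\|_{H^{1+n}}^2$, i.e.\ with a \emph{single} power of $\|u\|_{L^\infty}$. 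It is worth stressing that no modified-energy correction can absorb this term: the corrective quantities are quartic and differentiate into sextic expressions, whereas the diagonal term is itself quartic, so a refined multilinear estimate is genuinely unavoidable here.

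With every contribution thus bounded by $\|u\|_{L^\infty}\,\|u\|_{H^{1+n}}^2$ up to factors depending only on $\|u_0\|_{H^{1/2}}$, I would set $g(t)=\|u(t)\|_{H^{1+n}}^2/(C^2\|u_0\|_{H^{1/2}}^4)$, substitute the Brezis--Gallou\"et bound for $\|u\|_{L^\infty}$, and run Osgood's lemma exactly as at the end of Section \ref{ozv}. This reproduces an inequality of the form $g(t)\le Cg(0)e^{B|t|^2}$, hence \eqref{sz-t2}, with $B=B_n\|u_0\|_{H^{1/2}}^8$. Finally I would remark that it is precisely the single surviving power of $\|u\|_{L^\infty}$ that forces the rate $e^{B|t|^2}$: removing it — which for the Szeg\H{o} equation is possible only through the Lax-pair structure, and therefore lies outside an elementary argument — is what upgrades the estimate to the simple exponential $e^{B|t|}$ of \cite{ann}.
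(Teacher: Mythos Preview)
Your architecture matches the paper's exactly: no modified energy (since \eqref{sz} has no linear part), direct differentiation of $\|\partial_x^{1+n}u\|_{L^2}^2$, Leibniz, cancellation of the term $|u|^2|\partial_x^{1+n}u|^2$ via the imaginary part, routine bounds on the genuinely mixed terms, and Brezis--Gallou\"et together with Osgood to close. The one point of divergence is the device used to bound the remaining ``diagonal'' contribution with only \emph{one} power of $\|u\|_{L^\infty}$.

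The paper does \emph{not} attack the quadrilinear symbol $(k_2k_4)^{1+n}-(k_1k_3)^{1+n}$ that you isolate. Instead it groups the Leibniz expansion as $\partial_x^{1+n}(|u|^2u)=|u|^2\partial_x^{1+n}u+(\text{cross})+u\,\partial_x^{1+n}|u|^2$ and, since $|u|^2$ is real and $\partial_x^{1+n}u$ has only nonnegative frequencies, rewrites
\[
(u\,\partial_x^{1+n}|u|^2,\partial_x^{1+n}u)=(\Pi_+(u\,\partial_x^{1+n}|u|^2),\partial_x^{1+n}u)=(H_u(\partial_x^{1+n}|u|^2),\partial_x^{1+n}u),
\]
where $H_u(h)=\Pi_+(u\bar h)$ is the Hankel operator of symbol $u$. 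The elementary bound $\|H_u\|_{L^2\to L^2}\le\|u\|_{H^{1/2}}$ (Proposition \ref{Hankel}) then yields $|(\cdots)|\le\|u\|_{H^{1/2}}\|u\|_{L^\infty}\|u\|_{H^{1+n}}^2$ in one stroke. This Hankel bound is the true Szeg\H{o} analogue of Lemma \ref{leiblem}, and it is precisely the holomorphy (the projector $\Pi_+$) that buys the $H^{1/2}$ factor in place of a second $L^\infty$.

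Your symbol-factorisation route is left at the level of a plan, and a direct Lemma~\ref{leiblem}-style argument on the quadrilinear weight is not evident: already for $n=0$ one has $(k_2k_4)-(k_1k_3)=(k_2-k_1)(k_4-k_1)$ under the constraint $k_1+k_3=k_2+k_4$, which is not uniformly small relative to $k_2k_4$. The estimate you want is of course true (the Hankel argument proves it), but the specific mechanism you invoke is the gap in the proposal; replacing it by Proposition \ref{Hankel} closes the proof cleanly.
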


We recall here that, though \eqref{sz-t2} is not the best bound available, it is the best one we can prove without resorting to the Lax pair formalism. The proof below only involves the boundedness of trajectories in the space $H^{1/2}(\mathbb{T})$ (which is due to the conservation of mass and momentum). It also uses a standard fact about Hankel operators.
\begin{déf}
Let $v\in H_+^{1/2}(\mathbb{T})$ (\emph{i.e.} $v$ has vanishing negative frequencies). The Hankel operator of symbol $v$ is the following $\mathbb{C}$--antilinear operator :
\[ H_v : \left\lbrace \begin{aligned} L^2_+(\mathbb{T}) &\longrightarrow L^2_+(\mathbb{T}) \\ h &\longmapsto \Pi_+(v\bar{h}). \end{aligned}\right.\]
\end{déf}

\begin{prop}\label{Hankel}
For $h\in L^2_+(\mathbb{T})$, we have $\|H_v(h)\|_{L^2}\leq \|v\|_{H^{1/2}}\|h\|_{L^2}$.
\end{prop}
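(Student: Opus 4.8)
The plan is to proceed by duality. Since $H_v(h)=\Pi_+(v\bar h)$ already lies in $L^2_+(\mathbb{T})$, its $L^2$-norm equals the supremum of $|(H_v(h),g)|$ over $g\in L^2_+(\mathbb{T})$ with $\|g\|_{L^2}\le 1$. First I would use that $\Pi_+$ is the orthogonal projection onto $L^2_+$ and that $g\in L^2_+$ to discard the projection:
\[ (H_v(h),g)=(\Pi_+(v\bar h),g)=(v\bar h,g)=(v,hg), \]
the last step being $\int_{\mathbb T} v\,\bar h\,\bar g=\int_{\mathbb T} v\,\overline{hg}$. Writing $v=\sum_{m\ge0}v_m e^{imx}$, $h=\sum_{n\ge0}h_n e^{inx}$, $g=\sum_{n\ge0}g_n e^{inx}$, the decisive structural point is that $hg$ has Fourier support in $\{m\ge0\}$, so by Parseval $(v,hg)=\sum_{m\ge0} v_m\,\overline{(hg)_m}$ with $(hg)_m=\sum_{j=0}^{m}h_j g_{m-j}$ a \emph{finite} sum.

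Next I would split this pairing by a weighted Cauchy--Schwarz inequality tuned to the $H^{1/2}$--$H^{-1/2}$ duality (with the convention $\|v\|_{H^{1/2}}^2=\|v\|_{L^2}^2+\||D|^{1/2}v\|_{L^2}^2=\sum_k(1+|k|)|v_k|^2$):
\[ \Big|\sum_{m\ge0} v_m\,\overline{(hg)_m}\Big|\le\Big(\sum_{m\ge0}(1+m)|v_m|^2\Big)^{1/2}\Big(\sum_{m\ge0}\frac{|(hg)_m|^2}{1+m}\Big)^{1/2}=\|v\|_{H^{1/2}}\,\|hg\|_{H^{-1/2}}. \]
The whole proposition thus reduces to the bilinear estimate $\|hg\|_{H^{-1/2}}\le\|h\|_{L^2}\|g\|_{L^2}$ for $h,g\in L^2_+$. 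This is where the one-sidedness of the frequencies pays off: Cauchy--Schwarz applied to the $m+1$ terms of $(hg)_m=\sum_{j=0}^m h_j g_{m-j}$ gives $|(hg)_m|^2\le(1+m)\sum_{j=0}^m|h_j|^2|g_{m-j}|^2$, so after dividing by $1+m$ and summing over $m\ge0$ the convolution factorizes,
\[ \sum_{m\ge0}\frac{|(hg)_m|^2}{1+m}\le\sum_{m\ge0}\sum_{j=0}^m|h_j|^2|g_{m-j}|^2=\Big(\sum_{j}|h_j|^2\Big)\Big(\sum_{i}|g_i|^2\Big)=\|h\|_{L^2}^2\|g\|_{L^2}^2. \]
Taking the supremum over $\|g\|_{L^2}\le1$ then yields the claim with constant $1$.

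The only genuine difficulty, and the reason one cannot simply bound $\|\Pi_+(v\bar h)\|_{L^2}\le\|v\bar h\|_{L^2}\le\|v\|_{L^\infty}\|h\|_{L^2}$, is that $H^{1/2}(\mathbb T)$ fails to embed into $L^\infty$ at this endpoint, so $v$ may not be placed in $L^\infty$. The weighted duality circumvents this precisely because the missing $1/(1+m)$ decay in $\|hg\|_{H^{-1/2}}$ is supplied, for free and with sharp constant, by the truncation $0\le j\le m$ in the convolution; for two-sided $h,g$ the analogous bound would be false. A harmless technical point is the absolute convergence of the pairing $(v,hg)$, which I would handle by first establishing the inequality for trigonometric polynomials $h,g$ and then passing to the limit, the bound being uniform.
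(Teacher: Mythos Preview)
Your argument is correct and reaches the sharp constant $1$. The route, however, differs from the paper's. The paper computes the $L^2$-norm of $H_v(h)$ directly: writing $\widehat{H_v(h)}(k)=\sum_{l\ge k}v_l\overline{h_{l-k}}=\sum_{j\ge0}v_{j+k}\overline{h_j}$, a single Cauchy--Schwarz gives $|\widehat{H_v(h)}(k)|^2\le\big(\sum_{j\ge0}|v_{j+k}|^2\big)\|h\|_{L^2}^2$, and summing over $k$ and applying Fubini yields $\|h\|_{L^2}^2\sum_{m\ge0}(m+1)|v_m|^2$ in three lines. You instead dualize, recast the problem as the $H^{1/2}$--$H^{-1/2}$ pairing $(v,hg)$, and reduce matters to the bilinear estimate $\|hg\|_{H^{-1/2}}\le\|h\|_{L^2}\|g\|_{L^2}$ for $h,g\in L^2_+$, which you then prove by Cauchy--Schwarz on each finite convolution sum. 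The two arguments are essentially dual to one another: the paper's Fubini step, counting that $v_m$ appears in $m+1$ of the inner sums, is the mirror image of your observation that $(hg)_m$ has exactly $m+1$ terms. The paper's proof is shorter and avoids the auxiliary $g$; your version isolates a clean bilinear inequality on $L^2_+\times L^2_+$ that may be of independent interest and makes the role of one-sided frequencies more explicit.
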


\begin{proof}[Proof]
Expand $h=\sum_{k\geq 0}h_ke^{ikx}$ and $v=\sum_{k\geq 0}v_ke^{ikx}$. With these notations,
\[ \Pi_+(v\bar{h})=\sum_{k\geq 0}e^{ikx}\left( \sum_{l\geq k}v_l\overline{h_{l-k}}\right) .\]
A simple application of the Cauchy-Schwarz inequality gives
\[ \|\Pi_+(v\bar{h})\|_{L^2}^2\leq \sum_{k\geq 0}\left( \sum_{l\geq 0}|v_{l+k}|^2\right)\left( \sum_{l\geq 0}|h_l|^2\right) = \|h\|_{L^2}^2 \sum_{\tilde{k}\geq 0} (1+\tilde{k})|v_{\tilde{k}}|^2,\]
which is the yielded result.
\end{proof}

Since the embedding $H^{1/2}(\mathbb{T}) \hookrightarrow L^\infty(\mathbb{T})$ fails, proposition \ref{Hankel} is an improvement of the standard $L^\infty$-$L^2$ estimate of a product in $L^2$, similarly to lemma \ref{leiblem}. It is the key of the

\begin{proof}[Proof of proposition \ref{sz-prop}]We compute, for $n\geq 0$,
\[ \frac{d}{dt}\|\partial_x^{1+n}u\|_{L^2}^2=2\Re e (\partial_x^{1+n}\dot{u},\partial_x^{1+n}u)=2\Im m(\partial_x^{1+n}(|u|^2u),\partial_x^{1+n}u).\]
The last equality comes from the equation, and the fact that the frequencies of $u$ are nonnegative (so we get rid of $\Pi_+$ here). Then, using Leibniz rule, we expand $\partial_x^{1+n}(|u|^2u)=|u|^2\partial_x^{1+n}u + \sum_{k=1}^n\binom{1+n}{k}(\partial_x^k|u|^2)(\partial_x^{1+n-k}u) + u\partial_x^{1+n}|u|^2$. The first term cancels because of the imaginary part, and the "crossed terms" are easily estimated thanks to Sobolev injections and interpolation inequalities, as in the proof of theorem \ref{theohw}. As for the last term, we have
\[ (u\partial_x^{1+n}|u|^2,\partial_x^{1+n}u)=(\Pi_+(u\partial_x^{1+n}|u|^2),\partial_x^{1+n}u)=(H_u(\partial_x^{1+n}|u|^2),\partial_x^{1+n}u),\]
because $|u|^2$ is real. So, by proposition \ref{Hankel}, and then inequality \eqref{brezg-init},
\[\frac{d}{dt}\|u\|_{H^{1+n}}^2 \lesssim \|u\|_{H^{1+n}}^2\|u\|_{H^{1/2}}\|u\|_{L^\infty} \lesssim \|u\|_{H^{1+n}}^2\|u\|_{H^{1/2}}^2\sqrt{\log (1+\|u\|_{H^{1+n}}^2)},\]
and the proof is complete, once we have recalled that $\|u\|_{H^{1/2}}\leq C_0\|u_0\|^2_{H^{1/2}}$.
\end{proof}

\section{Some comments on the threshold $\alpha = \frac{2}{3}$}\label{wellp}
In this section, we discuss the bound $\alpha =\frac{2}{3}$ that naturally appears in the proof of theorem \ref{theommt}.

The crucial point, in the proof above, is to know which Strichartz estimate we are able to establish. In particular, following the strategy of \cite{BGT}, we would like to know for which values of the parameter $\gamma$ the inequality
\begin{equation}\label{str44}
\|e^{-it|D|^\alpha}u(x)\|_{L^4\left( (0,1)_t, L^4({\mathbb{T}_x})\right)} \leq C\|u\|_{H^\gamma}
\end{equation}
holds, whenever $u\in H^\gamma (\mathbb{T})$. If true, \eqref{str44} would imply that equation \eqref{mmt} is well-posed in $H^{\alpha /2}$, provided that $2\gamma <\frac{\alpha}{2}$, and we could then adapt the arguments we developped in section \ref{fin} to prove that solutions are polynomially bounded.

Looking back at \eqref{Sloc}, and interpolating the $L^4((0,1)_t, L^\infty(\mathbb{T}_x))$ estimate we obtained with the trivial $L^\infty((0,1)_t, L^2(\mathbb{T}_x))$ one, we find a bound for $\|e^{-it|D|^\alpha}u(x)\|_{L^8\left( (0,1)_t, L^4({\mathbb{T}_x})\right)}$, so \eqref{str44} is proved with $\gamma = \frac{1}{4}-\frac{\alpha}{8}$. The condition $2\gamma < \frac{\alpha}{2}$ exactly means that $\alpha >\frac{2}{3}$.

However, a scaling heuristic suggests that the natural value of $\gamma$ should rather be $\gamma_0:=\frac{1}{4}-\frac{\alpha}{4}$. Here, the condition $2\gamma_0<\frac{\alpha}{2}$ would enable us to extend the conclusions of theorem \ref{theommt} until $\alpha =\frac{1}{2}$.

In \cite{Demir}, by a different method, Demirbas, Erdo\u{g}an and Tzirakis also prove a Strichartz estimate in the case $\alpha >1$, but their result corresponds to ours (notice that in their work, they use other notations : what they call $\alpha$ is in fact half ours).

So far, we don't know if \eqref{str44} can be proved with $\gamma =\gamma_0$. Usual counter-examples (such as localized functions) only confirm that the scaling exponent $\gamma_0$ is the best we can hope. Besides, the difficulty is not due to the particular framework of the torus, since when $\alpha <1$ the speed of propagation of the waves (\emph{i.e.} the group velocity) is finite.

\vspace{0,5cm}
\bibliography{mabiblio}

\begin{thebibliography}{10}

\bibitem{BourComm}
J.~Bourgain.
\newblock Periodic nonlinear {S}chr\"odinger equation and invariant measures.
\newblock {\em Comm. Math. Phys.}, 166(1):1--26, 1994.

\bibitem{Bourgain}
J.~Bourgain.
\newblock On the growth in time of higher {S}obolev norms of smooth solutions
  of {H}amiltonian {PDE}.
\newblock {\em Internat. Math. Res. Notices}, (6):277--304, 1996.

\bibitem{BourGAFA}
J.~Bourgain.
\newblock Problems in {H}amiltonian {PDE}'s.
\newblock {\em Geom. Funct. Anal.}, (Special Volume, Part I):32--56, 2000.
\newblock GAFA 2000 (Tel Aviv, 1999).

\bibitem{BrG}
H.~Brezis and T.~Gallou\"et.
\newblock Nonlinear {S}chr\"odinger evolution equations.
\newblock {\em Nonlinear Anal.}, 4(4):677--681, 1980.

\bibitem{BGT}
N.~Burq, P.~G{\'e}rard, and N.~Tzvetkov.
\newblock Bilinear eigenfunction estimates and the nonlinear {S}chr\"odinger
  equation on surfaces.
\newblock {\em Invent. Math.}, 159(1):187--223, 2005.

\bibitem{demirbas2014almost}
S.~Demirbas.
\newblock Almost sure global well-posedness for fractional cubic
  {S}chr\"odinger equation on torus.
\newblock {\em arXiv preprint arXiv:1404.5270}, 2014.

\bibitem{Demir}
S.~Demirbas, M.B. Erdo{\u{g}}an, and N.~Tzirakis.
\newblock Existence and {U}niqueness theory for the fractional {S}chr\"odinger
  equation on the torus.
\newblock {\em arXiv preprint arXiv:1312.5249}, 2013.

\bibitem{ann}
P.~G{\'e}rard and S.~Grellier.
\newblock The cubic {S}zeg{\H o} equation.
\newblock {\em Ann. Sci. \'Ec. Norm. Sup\'er. (4)}, 43(5):761--810, 2010.

\bibitem{aPDE}
P.~G{\'e}rard and S.~Grellier.
\newblock Effective integrable dynamics for a certain nonlinear wave equation.
\newblock {\em Anal. PDE}, 5(5):1139--1155, 2012.

\bibitem{IHES}
P.~G{\'e}rard and S.~Grellier.
\newblock On the growth of {S}obolev norms for the cubic {S}zeg{\H o} equation.
\newblock {\em Séminaire Laurent Schwartz — EDP et applications}, 2014-2015,
  Exp. No. 11, 20 p.

\bibitem{livrePG}
P.~G{\'e}rard and S.~Grellier.
\newblock {\em The cubic {S}zeg{\H o} equation and {H}ankel operators}.
\newblock arXiv preprint arXiv:1508.06814. 2015.

\bibitem{gin}
J.~Ginibre.
\newblock Le probl\`eme de {C}auchy pour des {EDP} semi-lin\'eaires
  p\'eriodiques en variables d'espace (d'apr\`es {B}ourgain).
\newblock {\em Ast\'erisque}, (237):Exp.\ No.\ 796, 4, 163--187, 1996.
\newblock S{\'e}minaire Bourbaki, Vol. 1994/95.

\bibitem{gottwald}
G.~Gottwald, R.~Grimshaw, and B.A. Malomed.
\newblock Stable two-dimensional parametric solitons in fluid systems.
\newblock {\em Physics Letters A}, 248(2):208--218, 1998.

\bibitem{KappG}
B.~Gr{\'e}bert and T.~Kappeler.
\newblock {\em The defocusing {NLS} equation and its normal form}.
\newblock EMS Series of Lectures in Mathematics. European Mathematical Society
  (EMS), Z\"urich, 2014.

\bibitem{HPTV}
Z.~Hani, B.~Pausader, N.~Tzvetkov, and N.~Visciglia.
\newblock Modified scattering for the cubic {S}chr{\" o}dinger equation on
  product spaces and applications.
\newblock {\em arXiv preprint arXiv:1311.2275}, 2013.

\bibitem{KST}
T.~Kappeler, B.~Schaad, and P.~Topalov.
\newblock Scattering-like phenomena of the periodic defocusing {NLS} equation.
\newblock {\em arXiv preprint arXiv:1505.07394}, 2015.

\bibitem{KPV}
C.~E. Kenig, G.~Ponce, and L.~Vega.
\newblock Well-posedness and scattering results for the generalized
  {K}orteweg-de {V}ries equation via the contraction principle.
\newblock {\em Comm. Pure Appl. Math.}, 46(4):527--620, 1993.

\bibitem{kiv-quad}
Yu.~S. Kivshar.
\newblock Bright and dark spatial solitons in non-kerr media.
\newblock {\em Optical and Quantum Electronics}, 30(7):571--614, 1998.

\bibitem{MMT}
A.~J. Majda, D.~W. McLaughlin, and E.~G. Tabak.
\newblock A one-dimensional model for dispersive wave turbulence.
\newblock {\em J. Nonlinear Sci.}, 7(1):9--44, 1997.

\bibitem{hayashi}
T.~Ozawa N.~Hayashi and K.~Tanaka.
\newblock On a system of nonlinear schrödinger equations with quadratic
  interaction.
\newblock {\em Annales de l'Institut Henri Poincaré / Analyse non linéaire},
  30:661--690, 2013.

\bibitem{OzV}
T.~Ozawa and N.~Visciglia.
\newblock An improvement on the {B}rezis-{G}allou{\"e}t technique for 2{D NLS}
  and 1{D} half-wave equation.
\newblock {\em arXiv preprint arXiv:1403.7443}, 2014.

\bibitem{Staffilani}
G.~Staffilani.
\newblock On the growth of high {S}obolev norms of solutions for {K}d{V} and
  {S}chr\"odinger equations.
\newblock {\em Duke Math. J.}, 86(1):109--142, 1997.

\bibitem{Stein}
E.~M. Stein.
\newblock {\em Harmonic analysis: real-variable methods, orthogonality, and
  oscillatory integrals}, volume~43 of {\em Princeton Mathematical Series}.
\newblock Princeton University Press, Princeton, NJ, 1993.
\newblock With the assistance of Timothy S. Murphy, Monographs in Harmonic
  Analysis, III.

\bibitem{Xu}
H.~Xu.
\newblock Large time blowup for a perturbation of the cubic {S}zeg{\H{o}}
  equation.
\newblock {\em Analysis \& PDE}, 7(3):717--731, 2014.

\bibitem{Xu2}
H.~Xu.
\newblock Unbounded sobolev trajectories and modified scattering theory for a
  wave guide nonlinear schr\"odinger equation.
\newblock {\em arXiv preprint arXiv:1506.07350}, 2015.

\bibitem{zsh}
V.~E. Zakharov and A.~B. Shabat.
\newblock Exact theory of two-dimensional self-focusing and one-dimensional
  self-modulation of waves in nonlinear media.
\newblock {\em \v Z. \`Eksper. Teoret. Fiz.}, 61(1):118--134, 1971.

\end{thebibliography}
\bibliographystyle{plain}

\end{document}